\newtheorem{thm}{Theorem}[section]
\newtheorem{prop}{Proposition}[section]
\newtheorem{lem}[prop]{Lemma}
\theoremstyle{definition}
\newtheorem{defn}[prop]{Definition}
\numberwithin{equation}{section}
\newcommand{\allof}[1]{{\left(#1\right)}}
\newcommand{\setof}[1]{{\left\{ #1 \right\}}}
\newcommand{\st}{\colon}
\newcommand{\irr}{\mathrm{Irr}}
\DeclareMathOperator{\End}{End}
\newcommand{\norm}[1]{\left\Vert#1\right\Vert}
\newcommand{\abs}[1]{\left\vert#1\right\vert}
\newcommand{\cx}{{\mathbb{C}}}
\newcommand{\C}{{\mathbb{C}}}
\newcommand{\rl}{{\mathbb{R}}}
\newcommand{\hx}{\mathbb{H}}
\newcommand{\Q}{\mathbb{Q}}
\newcommand{\Aa}{\mathfrak{A}}
\newcommand{\Bb}{\mathfrak{B}}
\newcommand{\from}{\colon}
\newcommand{\Span}{\mathrm{span}}
\newcommand{\tmop}[1]{\ensuremath{\operatorname{#1}}}
\renewcommand{\Re}{\tmop{Re}}
\renewcommand{\Im}{\tmop{Im}}
\newcommand{\Z}{\mathbb{Z}}
\newcommand{\ol}{\overline}
\newcommand{\wt}{\widetilde}
\newcommand{\D}{\mathbb{D}}
\DeclareMathOperator{\diag}{diag}
\newcommand\ipr[1]{\left\langle #1 \right\rangle}
\newcommand{\X}{\mathsf{X}}
\title{Polarization identities}
\author{Chase Bender}
\address{Department of Mathematics, University of Notre Dame, Notre Dame, IN 46556,  USA}
\email{cbender3@nd.edu}
\author{Debraj Chakrabarti}
\address{Department of Mathematics, Central Michigan University, Mt. Pleasant,  MI 48859,  USA}
\email{chakr2d@cmich.edu}
\thanks{Debraj Chakrabarti was partially supported by  Simons Foundation Collaboration Grant number 706445.}
\keywords{Polarization identities, Hermitian forms.}
\subjclass[2010]{16W10, 15A63, 46C15}
\begin{document}
	\begin{abstract} We prove a generalization of the polarization identity of linear algebra expressing the inner product of a complex inner product space in terms of the norm, where the field of scalars is extended to an associative algebra equipped with an involution, and  polarization is viewed as an averaging operation over a compact multiplicative subgroup of the scalars. 
	Using this we prove a general form of the Jordan-von Neumann theorem on characterizing 
	inner product spaces among normed linear spaces, when the scalars are taken in an associative algebra. 
\end{abstract}
\maketitle
\section{Introduction}
 \subsection{Motivation }The goal of this article is to present a general version
 of the  \emph{polarization identity} of elementary linear algebra. This well-known  identity
 takes the form
\begin{equation}
	\label{eq-pol-classical} 
	\ipr{x,y}= \frac{1}{4}\left(\norm{x+y}^2+i\norm{x+iy}^2-\norm{x-y}^2 -i\norm{x-iy}^2\right), \quad i=\sqrt{-1},
\end{equation}
for vectors $x,y$ in a complex inner product space,
where  the  inner product is denoted by $\ipr{\cdot,\cdot}$ and the  induced norm  by $\norm{\cdot}$.
The identity 
has very  important applications in classical Hilbert space theory (e.g. \cite{jvn,halmos}).
This paper takes the point of view, not unknown in the literature (see \cite[p. 12, Problem 1.10]{young} and \cite[pp. 53--54]{dangelo}), that this identity is an instance of a fundamental and ubiquitous process in mathematics, that of 
averaging over a group (a.k.a. Haar integration).  From this point of view, \eqref{eq-pol-classical} is really saying that
\begin{equation}\label{eq-pol-classical2}
		\ipr{x,y}= \int_G \norm{x+g y}^2g dg, 
\end{equation}
where $G=\{\pm1, \pm i\}$ is the four element cyclic group of complex fourth roots of unity, and the integral is that of the complex valued function
$g\mapsto \norm{x+g y}^2g$ on the compact group $G$ with respect to the Haar measure of $G$, normalized to a probability measure. Our 
generalization of \eqref{eq-pol-classical} (Theorem~\ref{thm-main} below) will replace the field of complex numbers by an associative algebra $\Aa$
 over $\rl$ equipped with an involution,
the inner product with a general Hermitian form, the group $G$ with a compact multiplicative group of unitary elements of $\Aa$ (where an element of $\Aa$ is unitary if 
the involution maps it to its inverse).

Several authors have considered the natural problem of obtaining  polarization identities for 
Hermitian forms over scalars other than $\rl$ or $\cx$, and the closely related Jordan-von Neumann theorem in this setting (see, e.g., \cite{kurepa,jamisonthesis,sem, vukman,giarruso, ili1,ili2} etc.).
 The polarization process has also been studied by Waterhouse (\cite{waterhouse}) from a purely algebraic perspective for involutive 
algebras over very general commutative rings.  Our results here have interesting intersections with the generalized polarization identities obtained by these authors, but emphasize the role of 
averaging over  a group as a key feature of the polarization process of recovering a Hermitian form from its restriction to the diagonal.  We also restrict ourselves to algebras over the real numbers
$\rl$, first, to take advantage of classical integration theory, and second, because of the application to a generalized
Jordan-von Neumann theorem (see Theorem~\ref{thm-jvn} below). However, it is clear that ideas and results of this paper
make sense over other fields, and it would be interesting to look at such generalizations.

\subsection{Polarizable algebras and polarizing subgroups} 
Let $\Aa=(\Aa,*)$ be a \emph{ real involutive algebra,}
by which we mean a finite dimensional unital associative algebra over $\rl$ equipped with 
an anti-automorphism $\Aa\to\Aa, \alpha \mapsto \alpha^*$ of order two (the \emph{involution}), i.e., the involution is $\rl$-linear,  $(\alpha\beta)^*=\beta^*\alpha^*$ 
and $(\alpha^*)^*=\alpha$ for $\alpha, \beta\in \Aa$. The multiplicative identity of $\Aa$ will be denoted by $1_\Aa$ (or simply 1 if confusion is unlikely).
We also call real involutive algebras \emph{$*$-algebras} for short, and these $*$-algebras will replace the field $\cx$ in the generalized polarization identity.

Thought of as a finite dimensional real vector space, a  $*$-algebra 
$\Aa$ has a natural linear topology in which the algebraic operations are continuous. We will   always endow $\Aa$ with this topology, and its subsets with the induced subspace topology.  Then the group $\Aa^\times$  of units of $\Aa$ is a locally compact 
topological group. If $G$ is a compact subgroup of $\Aa^\times$, then $G$ is a compact topological group, and therefore admits a (simultaneously left and right invariant) Haar measure,  which will be always normalized to  a probability measure. Given a 
function $f:G\to V$ taking values in a finite dimensional real vector space $V$, we can define 
the integral with respect to Haar measure of $f$, which we will denote by 
\begin{equation}\label{eq-vectint}
\int_G fdg  \quad \text{ or} \quad \int_G f(g)dg
\end{equation}
and can be defined  invariantly as the unique vector in the space $V$ satisfying the condition
\begin{equation}\label{eq-intdefn}
	\phi\left( \int_G fdg\right)= \int_G \left(\phi\circ f\right) \,dg 
\end{equation}
for each linear functional $\phi: V\to\rl$, the integral on the right being the classical integral of a real valued function on $G$ with respect to Haar measure. The existence and uniqueness of the integral in \eqref{eq-vectint} is easily established by choosing a basis of the vector space $V$ and 
working in the corresponding coordinates.

	An element $u\in \Aa$ will be said to be  \emph{unitary} if we have
	$ uu^*=1_\Aa$, and the collection of all unitary elements
	\begin{equation}
		\label{eq-unitary}
	\Gamma(\Aa,*)=\{u\in \Aa: uu^*=1_\Aa\},
	\end{equation}
will be  called the \emph{unitary group } of the algebra $(\Aa,*)$.  It is therefore 
a locally compact topological group, and 
 generalizes 
	the circle group $\{\abs{z}=1\}$ in $\cx$.
	
We will say that a closed subgroup $G\subset \Gamma(\Aa, *)$ \emph{generates} the $*$-algebra $\Aa$ if $\Aa$ is the smallest $*$-subalgebra  of $\Aa$ containing the group $G$. 
It is clear that $\Gamma(\cx) = \{\abs{z}=1\}$ generates $\cx$. Notice that since $G$ is closed under the operations of multiplication and involution of $\Aa$, it follows that $G$ generates $\Aa$ if and only if
\[ \Span_\rl G=\Aa\]
i.e. $G$ generates $\Aa$ also as a vector space. We now make the following definition:
\begin{defn}\label{defn-pol-algebra}
		Let $(\Aa,*)$ be a $*$-algebra. We say that $(\Aa,*)$ is \emph{polarizable} if the unitary group $\Gamma(\Aa,*)$ is compact and generates $\Aa$.
\end{defn}
Polarizable algebras will replace the complex numbers in the general polarization identity \eqref{eq-polarization} below. A key property of polarizable algebras is that a version of Maschke's theorem
holds for them, showing that they are semisimple and leading to a complete classification of these algebras under an appropriate notion of equivalence (see Theorem~\ref{thm-kappa} below).  We now
introduce the groups which will replace, in the general polarization formula,  the group $G$ of   $4$-th roots of unity that appears in \eqref{eq-pol-classical} and \eqref{eq-pol-classical2}:
	\begin{defn}
		\label{defn-pol-group}
		Let $(\Aa,*)$ be a $*$-algebra. A compact subgroup $G \subseteq \Gamma(\Aa,*)$ is said to be  \emph{polarizing} if $G$ generates $\Aa$ as an algebra and 
		\begin{equation}
			\label{eq-gdg}
			\int_G gdg = 0,
		\end{equation}
	where the integral is that of the inclusion function $g:G\to \Aa$  of the group $G$ in the vector space $\Aa$, and the integral is taken with respect to the (bi-invariant) Haar measure.
	\end{defn}
	The following  facts shed some light on the notions introduced in Definitions~\ref{defn-pol-algebra} and  \ref{defn-pol-group}:		\begin{enumerate}[wide, label =(\alph*)]
	\item In a polarizable algebra $(\Aa,*)$, the unitary group $\Gamma(\Aa,*)$ is polarizable. The condition \eqref{eq-gdg} is satisfied for $\Gamma(\Aa,*)$ since $-1_\Aa$, the negative of the identity of the algebra $\Aa$, automatically belongs to $\Gamma(\Aa,*)$, so we have by Haar invariance:
	\begin{equation}\label{eq-gammapol}
	 \int_{\Gamma(\Aa,*)}g dg = \int_{\Gamma(\Aa,*)}(-1_\Aa)g dg = - \int_{\Gamma(\Aa,*)}g dg.
	\end{equation}
\item We will see below that \eqref{eq-gdg} is satisfied for many groups $G$  (Proposition~\ref{prop-firstmoment}).
\item On the other hand, as soon as a $*$-algebra has a polarizing subgroup (or even a compact subgroup of the unitary group that generates the algebra), it is polarizable (see Proposition 2.4 below).

	\item Finally, each polarizable algebra admits a \emph{finite} polarizing subgroup (see Proposition~\ref{prop-finite}) below. It follows
	that the polarization process is always possible in terms of algebraic formulas such as \eqref{eq-pol-classical} rather than integral formulas such as \eqref{eq-pol-int}, where the integral is over a Lie group of positive dimension. 

	\end{enumerate}

\subsection{Generalized Polarization identity} Let $(\Aa, *) $ be a $*$-algebra and let 
$\X$ be a left $\Aa$-module. By an ($\Aa$-valued) \emph{Hermitian form} on 
$\X$ we mean a map $Q:\X\times \X\to \Aa$  such that 
for $x,y,z\in \X$, $\alpha\in \Aa$ we have
\[ Q(x+y,z)=Q(x,z)+Q(y,z), \quad Q(\alpha x,y)= \alpha Q(x,y), \quad \text{ and }\quad Q(x,y)^*=Q(y,x). \]
Then we have the following:
	\begin{thm}
	\label{thm-main} Let $(\Aa,*)$ be a polarizable $*$-algebra. 
	Then there is an element  $\kappa\in \Aa$ such that whenever $\X$ is a left $\Aa$-module and 
	$Q:\X\times \X\to \Aa$ is a Hermitian form, then we have the polarization identity
	\begin{equation}\label{eq-polarization}
	Q(x,y)= \kappa\int_G q(x+gy)g dg,  \quad  x,y \in \X	\end{equation}
where  $q(x)=Q(x,x)$, $G\subset \Gamma(\Aa)$ is a polarizing subgroup, 
 and the integral is that of an $\Aa$-valued function on $G$ with respect to the normalized Haar measure.
\end{thm}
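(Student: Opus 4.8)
The plan is to expand $q(x+gy)$, multiply on the right by $g$, and integrate term by term. Writing $q(x+gy)=Q(x+gy,x+gy)$ and using bi-additivity together with the two consequences of the axioms, namely $Q(\alpha x,y)=\alpha Q(x,y)$ and $Q(x,\alpha y)=Q(x,y)\alpha^*$ (the latter coming from $Q(x,y)^*=Q(y,x)$), I would obtain
\[
q(x+gy)=q(x)+Q(x,y)g^*+g\,Q(y,x)+g\,q(y)\,g^*.
\]
Since every $g\in G$ is unitary, $g g^*=g^* g=1_\Aa$ (a one-sided inverse in a finite-dimensional associative algebra is two-sided), so multiplying on the right by $g$ gives $q(x)g+Q(x,y)+g\,Q(y,x)\,g+g\,q(y)$. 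Integrating over $G$, the normalization $\int_G dg=1$ turns the second summand into $Q(x,y)$, while the polarizing hypothesis $\int_G g\,dg=0$ annihilates the first and last summands (each is $\int_G g\,dg$ times a constant on one side). Hence, setting $T(b):=\int_G g b g\,dg$,
\[
\int_G q(x+gy)\,g\,dg=Q(x,y)+T\big(Q(y,x)\big).
\]

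The crux is to identify the $\rl$-linear operator $T$. Using bi-invariance of Haar measure I would establish the covariance relations $T(hb)=T(b)h^*$ and $T(bh)=h^*T(b)$ for $h\in G$, by the substitutions $g\mapsto gh$ and $g\mapsto hg$ and the identity $h^{-1}=h^*$. Putting $b=1_\Aa$ and $\mu:=T(1_\Aa)=\int_G g^2\,dg$ then yields simultaneously $T(h)=\mu h^*$ and $T(h)=h^*\mu$; comparing these shows $\mu$ commutes with every $h^*\in G$, and since $\Span_\rl G=\Aa$ this forces $\mu$ to be central. As $T$ and $b\mapsto\mu b^*$ are both $\rl$-linear and agree on the spanning set $G$, I conclude $T(b)=\mu b^*$ for all $b\in\Aa$ (and, by inversion-invariance of Haar measure, that $\mu^*=\mu$). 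Therefore $T(Q(y,x))=T(Q(x,y)^*)=\mu\,Q(x,y)$ and
\[
\int_G q(x+gy)\,g\,dg=(1_\Aa+\mu)\,Q(x,y).
\]

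It would then remain only to invert $1_\Aa+\mu$, and this I expect to be the main obstacle, since it is precisely the point where the compactness of $\Gamma(\Aa,*)$ — rather than pure algebra — must enter. Here I would average an arbitrary inner product on $\Aa$ over the left-regular action of $G$ to obtain a positive-definite form $\langle\cdot,\cdot\rangle_G$ for which each left multiplication $L_g$ is orthogonal. Using $\langle g^2a,a\rangle_G=\langle ga,g^{-1}a\rangle_G$ together with Cauchy--Schwarz and $\norm{ga}_G=\norm{g^{-1}a}_G=\norm{a}_G$, one gets $\langle g^2a,a\rangle_G\ge-\norm{a}_G^2$ pointwise, with value $+\norm{a}_G^2$ at $g=1_\Aa$; continuity of the integrand $g\mapsto\langle g^2a,a\rangle_G$ then forces the strict inequality
\[
\langle (1_\Aa+\mu)a,\,a\rangle_G=\norm{a}_G^2+\int_G\langle g^2a,a\rangle_G\,dg>0,\qquad a\neq0.
\]
Thus $L_{1_\Aa+\mu}$ is injective, hence bijective, so $1_\Aa+\mu$ is a unit of $\Aa$. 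Setting $\kappa:=(1_\Aa+\mu)^{-1}$, which is again central, gives $\kappa\int_G q(x+gy)\,g\,dg=Q(x,y)$, as required; note that $\kappa$ is manufactured from the algebra and the group $G$ alone, independently of the module $\X$ and the form $Q$, which is exactly what the statement demands.
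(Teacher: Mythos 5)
Your expansion of $q(x+gy)g$ and your identification of the operator $T(b)=\int_G gbg\,dg$ with $b\mapsto \mu\, b^*$, $\mu=\mu_2(G)$ central, reproduce the paper's key computation (Proposition~\ref{prop-computation} together with the first half of Proposition~\ref{prop-secondmoment}), so up to the formula $\int_G q(x+gy)g\,dg=(1_\Aa+\mu)Q(x,y)$ you and the authors are doing essentially the same thing. Where you genuinely diverge is the invertibility of $1_\Aa+\mu$. The paper gets it from the full structure theory: it first proves (Propositions~\ref{prop-noncompact} and \ref{prop-kappa1}) that a polarizable algebra is a $*$-direct sum of matrix algebras $M_{n_j}(\D_j)$ with the conjugate-transpose involution, computes $\mu_2(M_n(\D))=\tfrac{1}{n}\bigl(\tfrac{2}{\delta}-1\bigr)I_n$ explicitly, and observes that $1+\tfrac{1}{n}\bigl(\tfrac{2}{\delta}-1\bigr)\geq\tfrac12$ in each factor. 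Your argument --- averaging an inner product so that left multiplication by elements of $G$ is orthogonal, bounding $\ipr{g^2a,a}_G\geq-\norm{a}_G^2$ by Cauchy--Schwarz, and getting strictness from the value $+\norm{a}_G^2$ at $g=1_\Aa$ together with the positivity of Haar measure on nonempty open sets --- is correct, more elementary, and self-contained; the averaged inner product you need is exactly the one the paper builds in Proposition~\ref{prop-structure1} for other purposes. What the paper's heavier route buys is the explicit value of $\kappa$ in Theorem~\ref{thm-kappa}, which your soft positivity argument cannot produce, but that is not needed for Theorem~\ref{thm-main} itself.

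There is, however, one genuine gap. The theorem asserts --- and the authors emphasize immediately after the statement --- that a \emph{single} $\kappa$ works for \emph{every} polarizing subgroup $G$; this uniformity is what legitimizes, for instance, the passage from \eqref{eq-pol-c} to \eqref{eq-pol-int} as $N\to\infty$. Your $\kappa=(1_\Aa+\mu_2(G))^{-1}$ is a priori $G$-dependent, and you only note independence from $\X$ and $Q$. The missing ingredient is the identity $\mu_2(G)=\mu_2(\Gamma(\Aa,*))$ for every compact generating subgroup $G$ of the unitary group, proved in the paper as \eqref{eq-mu2inv} via
\[
\mu_2(G)=\int_{\Gamma(\Aa,*)}\gamma\gamma^*\mu_2(G)\,d\gamma=\int_{\Gamma(\Aa,*)}\int_G(\gamma g)^2\,dg\,d\gamma=\mu_2(\Gamma(\Aa,*)),
\]
which uses precisely your relation $\int_G g\gamma g\,dg=\gamma^*\mu_2(G)$ together with Haar invariance on $\Gamma(\Aa,*)$ (compact by Proposition~\ref{prop-structure1}). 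This is a short addendum that fits entirely inside your framework, but without it you have proved only the weaker statement in which $\kappa$ is allowed to depend on the chosen polarizing subgroup.
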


		We emphasize  that in Theorem~\ref{thm-main}, the constant  $\kappa$ depends only on the (polarizable) $*$-algebra $(\Aa,*)$
		 and not on the  particular polarizing group $G$.  This element $\kappa=\kappa(\Aa,*)$ will be called the \emph{polarization constant} of $(\Aa,*)$, and in Theorem~\ref{thm-kappa} below we show how to compute it.

\subsection{More definitions and constructions related to $*$-algebras}
Before we proceed further, we collect a few definitions. 
 
\begin{enumerate}[wide]
	\item If $(\Aa, \sharp)$ is a $*$-algebra, there is a natural way to define  an induced involution   on the matrix algebra $M_n(\Aa)$ by setting
	\begin{equation}\label{eq-induced}
		A\mapsto (A^\sharp)^T,
	\end{equation}
	where $A^\sharp$ is the matrix obtained from $A$ by applying the involution $\sharp$ to each entry, and $B^T$ is the transpose of the matrix $B$.   We call this  involution on $M_n(\Aa)$ the \emph{involution induced by $\sharp$.}
	\item Each of the real division algebras $\D=\rl,\cx, \hx$ has its own standard  involution  traditionally called a ``conjugation", which is the identity for $\rl$, the complex conjugation 
	for $\cx$ and the quaternionic conjugation
	\begin{equation}\label{eq-conjugation}
		 a+ib+jc+kd\mapsto a-ib-jc-kd
	\end{equation}
for $\hx$.  We denote the conjugation operation in each case by $z\mapsto \ol{z}$, following the standard convention. 
The involution induced by the conjugation map on $\D$ on the algebra $M_n(\D)$ will be called the ``conjugate-transpose" involution of 
$M_n(\D)$. We will see later that $M_n(\D)$ has many other involutions apart from the conjugate-transpose (see Proposition~\ref{prop-involution} below).
\item In the category of involutive algebras, a morphism is called a \emph{$*$-homomorphism}:
\[ \phi:  (\Aa, *)\to(\mathfrak{B},\sharp)\]
where $\phi$ is a homomorphism of unital associative algebras, and preserves the involution, i.e.
\[ \phi(\alpha^*)= \phi(\alpha)^\sharp.\]
It is clear what is meant by a $*$-isomorphism, or a $*$-subalgebra. 
\item Given a finite collection  $\{ (\Aa_j, *_j), 1\leq j \leq N\}$ of $*$-algebras, we define its \emph{$*$-direct sum} be the direct sum 
$\bigoplus_{j=1}^N \Aa_j$ equipped with the ``direct sum involution"
\[ (\alpha_1, \dots, \alpha_N)^*=(\alpha_1^{*_1},\dots, \alpha_N^{*_N}), \quad \alpha_j\in \Aa_j, 1\leq j \leq N.\]
\end{enumerate}

\subsection{Polarization constants}
We now determine the structure of polarizable algebras and use it to compute the polarization constant of an algebra:
	\begin{thm}
		\label{thm-kappa}
			Let $(\Aa,*)$ be a $*$-algebra.  Then $(\Aa,*)$ is polarizable if and only if  there is a $*$-isomorphism 
		\begin{equation}\label{eq-isom}
		\phi:  \bigoplus_{j=1}^N (M_{n_j}(\D_j),*)\to (\Aa,*),\	\end{equation}
	where in the left is the $*$-direct sum of a   a finite number of matrix algebras over real division rings, 
		 each endowed with the ``conjugate-transpose" involution.  The polarization constant of $(\Aa,*)$ is 
			 \begin{equation}\label{eq-kappacomp}
		 	 \phi\left( \frac{n_1\delta_1}{(n_1-1)\delta_1 +2 }I_{n_1}, \dots,\frac{n_j\delta_j}{(n_j-1)\delta_j +2 }I_{n_j},\dots, \frac{n_N\delta_N}{(n_N-1)\delta_N+2 }I_{n_N}\right), \end{equation}
		where $\delta_j=\dim_{\rl}(\D_j)=1,2$ or $4$ denotes the dimension of the division algebra $\D_j$ as a real vector space and $I_{n_j}$ is the $n_j\times n_j$ identity matrix  of $M_n(\D_j)$ for $1\leq j \leq N$.
	\end{thm}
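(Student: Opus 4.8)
The plan is to prove the two assertions in turn: first the structural dichotomy, and then, granting the structure, the evaluation of the polarization constant. For the structural direction I would run a Maschke-type argument. Since $(\Aa,*)$ is polarizable, $\Gamma=\Gamma(\Aa,*)$ is compact, so averaging an arbitrary inner product on the real vector space $\Aa$ over $\Gamma$ against Haar measure produces a $\Gamma$-invariant inner product $\langle\cdot,\cdot\rangle$. Relative to it, left multiplication by each $u\in\Gamma$ is orthogonal with adjoint $u^{-1}$, so for any left ideal $I$ the orthogonal complement $I^\perp$ is again invariant under left multiplication by $\Gamma$, hence (as $\Span_\rl\Gamma=\Aa$) by all of $\Aa$; thus $\Aa=I\oplus I^\perp$ and $\Aa$ is semisimple. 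Artin--Wedderburn together with Frobenius' theorem then gives an algebra isomorphism $\Aa\cong\bigoplus_j M_{n_j}(\D_j)$ with $\D_j\in\{\rl,\cx,\hx\}$, and it remains to transport the involution. Being an anti-automorphism, $*$ permutes the simple factors; a transposition of two distinct factors would force the unitary group to contain a copy of a full linear group $GL_{n}(\D)$, which is noncompact, contradicting polarizability, so $*$ preserves each factor and restricts to an involution of $M_{n_j}(\D_j)$. On each $M_n(\D)$ I would then compare the given involution with the conjugate-transpose $\dagger$: by Skolem--Noether they differ by an inner twist $X\mapsto sX^\dagger s^{-1}$, the involution axiom constrains $s$, and compactness of the unitary group $\{u:us u^\dagger=s\}$ forces the associated $\dagger$-Hermitian form to be definite, so a congruence realizes a $*$-isomorphism onto $(M_n(\D),\dagger)$. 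This last classification (compactness of the unitary group implies conjugate-transpose type up to $*$-isomorphism) is where I expect the main difficulty, since it is exactly the step that excludes the transpose-type ``first kind'' involutions. The converse is easier: for a direct sum of conjugate-transpose matrix algebras the unitary group is $\prod_j U(n_j,\D_j)$, a product of the compact groups $O(n_j),U(n_j),Sp(n_j)$, and one checks that the real span of $U(n,\D)$ is all of $M_n(\D)$, so $\Gamma$ generates and $(\Aa,*)$ is polarizable.

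For the constant I would expand the right-hand side of \eqref{eq-polarization} directly. Using sesquilinearity and $g^*=g^{-1}$ for $g\in G$, one finds
\[
q(x+gy)=q(x)+Q(x,y)g^{-1}+gQ(y,x)+g\,q(y)\,g^{-1}.
\]
Multiplying by $g$ on the right and integrating, the polarizing hypothesis $\int_G g\,dg=0$ kills the first and last terms, the second contributes exactly $Q(x,y)$, and the third contributes $T(Q(y,x))$, where $T(M):=\int_G gMg\,dg$; thus $\int_G q(x+gy)g\,dg=Q(x,y)+T(Q(y,x))$ for every module and every Hermitian form. The key structural fact about $T$ comes from invariance of Haar measure: left- and right-invariance give $T(Mh)=h^*T(M)$ and $T(hM)=T(M)h^*$ for $h\in G$, which, since $\Span_\rl G=\Aa$, extend linearly to $T(MA)=A^*T(M)$ and $T(AM)=T(M)A^*$ for all $A\in\Aa$. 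Setting $M=1_\Aa$ yields $T(A)=A^*\,T(1_\Aa)=T(1_\Aa)\,A^*$; the second equality shows $T(1_\Aa)$ is central, and inversion-invariance of Haar measure gives $T(1_\Aa)^*=T(1_\Aa)$, so $T(1_\Aa)$ is a self-adjoint central element, i.e. a real scalar $z_0$ times the identity on each factor. Hence $T(Q(y,x))=Q(y,x)^*z_0=Q(x,y)\,z_0$ and $\int_G q(x+gy)g\,dg=Q(x,y)(1_\Aa+z_0)$, forcing $\kappa=(1_\Aa+z_0)^{-1}$, a central (hence legitimately invertible) quantity.

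It then remains to compute $z_0=\int_G g^2\,dg$ on each simple factor $M_n(\D)$ and to verify independence of $G$. Here I would pass to the real character $\chi(g)=\operatorname{tr}_\rl(g)$ of the defining module $V=\D^n$ (real dimension $n\delta$) and use the Frobenius--Schur computation $\operatorname{tr}_\rl(T(1_\Aa))=\int_G\chi(g^2)\,dg=\dim(S^2V)^G-\dim(\Lambda^2 V)^G$. The sum $\dim(S^2V)^G+\dim(\Lambda^2V)^G$ equals $\dim_\rl\operatorname{End}_G(V)$ (identifying $V$ with its dual via the invariant inner product), which is $\delta$ because $G$ generates $\Aa=M_n(\D)$, whose commutant on $\D^n$ is $\D^{\mathrm{op}}$. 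Moreover the $\delta$ real components of the $\D$-valued Hermitian form $\langle x,y\rangle=\sum_i x_i\bar y_i$ are independent $G$-invariant bilinear forms, of which the real-part component is symmetric and the remaining $\delta-1$ imaginary components are alternating; hence $\dim(S^2V)^G=1$ and $\dim(\Lambda^2V)^G=\delta-1$, so $\operatorname{tr}_\rl(T(1_\Aa))=2-\delta$. Dividing by $n\delta$ gives $z_0=\tfrac{2-\delta}{n\delta}$, manifestly independent of the polarizing $G$, whence $\kappa=\bigl(1+\tfrac{2-\delta}{n\delta}\bigr)^{-1}I_n=\frac{n\delta}{(n-1)\delta+2}I_n$ on that factor. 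Finally I would assemble over the factors, using that the projection of $G$ onto each factor pushes Haar measure forward to Haar measure and still generates, which yields \eqref{eq-kappacomp} and simultaneously confirms the asserted $G$-independence of the polarization constant.
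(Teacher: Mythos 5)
Your overall architecture matches the paper's for the structure theorem, but there is a genuine gap in the step you yourself flag as the main difficulty. After Wedderburn and the observation that $*$ must preserve each simple factor (your noncompactness argument for a swapped pair is essentially the paper's, which exhibits an explicit unbounded sequence $(nI_1,\tfrac1n I_2,\dots)$ of unitaries), you assert that on each factor $M_n(\D)$ the involution differs from the conjugate-transpose $\dagger$ by an inner twist $X\mapsto sX^\dagger s^{-1}$, so that compactness forces the Hermitian form attached to $s$ to be definite. That Skolem--Noether normal form is simply false for involutions of the \emph{first kind} on $M_n(\cx)$: the transpose $X\mapsto X^T$ and the symplectic involution $X\mapsto -J_mX^TJ_m$ restrict to the identity on the center $\cx I$, whereas any $X\mapsto sX^\dagger s^{-1}$ restricts to complex conjugation there, so no such $s$ exists. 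The same issue arises for the skew-symmetric twists of the transpose on $M_n(\rl)$ (unitary group $Sp(2m,\rl)$) and the nonstandard involution on $M_n(\hx)$. These are precisely the cases your definiteness argument cannot see, and they must be excluded by separate noncompactness (or failure-to-generate, e.g.\ $n=1$, $\D=\cx$, transpose) arguments. The paper does this by first classifying all involutions on $M_n(\D)$ up to $*$-isomorphism (first kind vs.\ second kind, symmetric vs.\ alternating $U$, Sylvester-type normal forms) and then checking case by case that every unitary group other than $O(n)$, $U(n)$, $Sp(n)$ is noncompact or fails to span. Your converse direction also leaves the spanning claim $\Span_\rl U(n,\D)=M_n(\D)$ unproved; the paper establishes it constructively by exhibiting the polarizing subgroup $\Delta_G H$ of diagonal unitaries times a transitive permutation group.

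Your computation of the polarization constant, by contrast, is correct and takes a genuinely different and slicker route than the paper's. Both arguments reduce to showing $\int_G g\alpha g\,dg=\alpha^*\mu_2$ with $\mu_2=\int_G g^2\,dg$ a self-adjoint central element independent of the generating compact group $G$, and both expand $q(x+gy)g$ the same way. But where the paper evaluates $\mu_2(M_n(\D))=\tfrac1n\mu_2(\D)I_n$ by an explicit combinatorial integration over diagonal matrices twisted by a cyclic permutation group, and then computes $\mu_2(\D)=\tfrac2\delta-1$ by summing over finite subgroups of each division algebra, you get the value in one stroke from the Frobenius--Schur identity $\operatorname{tr}_\rl\int_G g^2\,dg=\dim(S^2V)^G-\dim(\Lambda^2V)^G$ together with $\dim(S^2V)^G+\dim(\Lambda^2V)^G=\dim_\rl\operatorname{End}_G(\D^n)=\delta$ (the commutant of $M_n(\D)$ on its simple module) and the explicit invariant forms given by the real components of $\sum_ix_i\bar y_i$. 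This yields $2-\delta$ for the trace, hence $\mu_2=\tfrac{2-\delta}{n\delta}I_n$ and $\kappa=\tfrac{n\delta}{(n-1)\delta+2}I_n$, and it delivers the $G$-independence for free since the representation-theoretic count uses only that $G$ generates. This is an attractive alternative to Proposition 3.5 of the paper and explains conceptually why $1,0,-\tfrac12$ appear for $\rl,\cx,\hx$: they are Frobenius--Schur indicators in disguise.
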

The fact that a polarizable algebra such as $(\Aa,*)$ has the above structure (i.e. it is semisimple) allows us to understand the module $\X$ and Hermitian form $Q$ of Theorem~\ref{thm-main}
in terms of $\Aa$ (see \cite{lam}).  Since the precise structure of the module $\X$  does not play any role in the polarization phenomenon of Theorem~\ref{thm-main}, we do not discuss this further.

\subsection{Generalized Jordan-von Neumann-Jamison theorem}
The Jordan-von Neumann theorem (\cite{jvn})
solves the problem of 
geometrically characterizing inner product spaces among  normed linear spaces  over $\cx$ or $\rl$: the norm of 
a normed space $(X,\norm{\cdot})$ arises from an inner product (i.e., there is an inner product $\ipr{\cdot,\cdot}$ on $X$ such that $\norm{x}= \sqrt{\ipr{x,x}}$ for each $x\in X$)
if and only if the \emph{parallelogram identity}
\begin{equation}\label{eq-classic-parallelogram}
	\norm{x+y}^2+\norm{x-y}^2= 2\left(\norm{x}^2+\norm{y}^2\right)
\end{equation}
holds for all $x,y\in X$.  In \cite{jamisonthesis}, the field of scalars was already extended to the 
quaternions (and octonions).  Here, we will prove a generalization to Hermitian forms over 
polarizable algebras. 

The analog of the map $x\mapsto \norm{x}^2$  occurring in  formulas such as \eqref{eq-pol-classical} and \eqref{eq-classic-parallelogram} 
in the  situation of non-commutative scalars from $\Aa$ will be called a \emph{quadrance}, i.e., a ``squaring",
a name inspired by the unorthodox work of Norman J. Wildberger \cite{divine}. 

\begin{defn}
	\label{defn-quadrance}  If $(\Aa,*)$ is a $*$-algebra,  and $\X$ a left $\Aa$-module, 
	 a map $q \from \X \to \Aa$ is called a \emph{quadrance} on $\X$ if 
	\begin{enumerate}
		\item $q(x)=q(x)^*$ for all $x \in \X$
		\item $q(\alpha x) = \alpha q(x) \alpha^*$ for all $x\in \X,\alpha \in \Aa$
		\item \label{item-topological} For  $x,y\in \X$ the map
		$ \rl \times \Aa\to \Aa$
		given by
		\begin{equation}\label{eq-continuity}
			(\lambda, \alpha) \mapsto q(\lambda x+ \alpha y)
		\end{equation}
		is continuous.
	\end{enumerate}
\end{defn}

Notice that the square of a norm on  a real or complex  vector space is obviously a quadrance. In this case, the vector space acquires a norm-topology, in which 
the norm is continuous, by the ``reverse triangle inequality" $\abs{\norm{x}-\norm{y}} \leq \norm{x-y}$, which is an immediate consequence of the ``triangle inequality"   $\norm{x+y}\leq \norm{x}+\norm{y}$  postulated 
on norms. Notice also that we have not endowed the module $\X$ with a topology, but we impose 
	the  continuity condition \eqref{eq-continuity}  on a quadrance $q$. 
	We show below in Proposition~\ref{prop-diag-rest} that the restriction of a Hermitian form to the diagonal is a quadrance.
	Conversely we have the following generalization of the Jordan-von Neumann theorem to non-commutative scalars:

	\begin{thm}\label{thm-jvn}	Let $(\Aa,*)$ be a polarizable $*$-algebra, let $\X$ be a left $\Aa$-module,
	and let $q$ be a quadrance on $\X$. 
	 Then there is a  Hermitian form $Q$ on $\X$ such that $q(x)=Q(x,x)$
 if and only if $q$  satisfies the \emph{classical parallelogram 
	identity }
	\begin{equation}
		\label{eq-classpar}
		q(x+y)+q(x-y)=2(q(x)+q(y)),
	\end{equation}
	 for all $x,y\in \X$. If such a form $Q$ exists, then it is unique. 
 
\end{thm}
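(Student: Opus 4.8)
The plan is to dispatch necessity and uniqueness at once and then devote the bulk of the work to constructing $Q$. For necessity, if $q(x)=Q(x,x)$ for a Hermitian form $Q$, expanding $Q(x\pm y,x\pm y)$ using additivity of $Q$ in each slot (additivity in the second slot follows from that in the first via the symmetry $Q(x,y)^{*}=Q(y,x)$) and adding the two gives $q(x+y)+q(x-y)=2Q(x,x)+2Q(y,y)=2(q(x)+q(y))$, which is \eqref{eq-classpar}. Uniqueness is then immediate from Theorem~\ref{thm-main}: any Hermitian form with diagonal $q$ must satisfy $Q(x,y)=\kappa\int_G q(x+gy)g\,dg$, and the right-hand side depends only on $q$, so two such forms coincide.

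For sufficiency I would take the formula furnished by Theorem~\ref{thm-main} as the \emph{definition} of the candidate form,
\[
Q(x,y):=\kappa\int_G q(x+gy)\,g\,dg,
\]
for a fixed polarizing subgroup $G\subseteq\Gamma(\Aa,*)$ and $\kappa$ the polarization constant; the integral exists because the continuity condition \eqref{eq-continuity} makes the integrand depend continuously on $g$ over the compact group $G$. The heart of the matter is additivity in the first argument, and this is where \eqref{eq-classpar} and the polarizing condition \eqref{eq-gdg} work together. Applying \eqref{eq-classpar} to the pair $x+gz,\,y+gz$ yields $q(x+gz)+q(y+gz)=\tfrac12 q(x+y+2gz)+\tfrac12 q(x-y)$, and integrating against $g\,dg$ the constant term $q(x-y)$ is annihilated because $\int_G g\,dg=0$. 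Applying \eqref{eq-classpar} instead to $u+gz,\,gz$ with $u=x+y$ gives $q(u+2gz)=2q(u+gz)+2q(gz)-q(u)$; here $\int_G q(u)g\,dg=0$ by \eqref{eq-gdg}, while axiom (2) of a quadrance together with $g^{*}g=1$ gives $q(gz)g=g\,q(z)$, so $\int_G q(gz)g\,dg=\bigl(\int_G g\,dg\bigr)q(z)=0$ as well. Combining the two computations collapses everything to $Q(x+y,z)=Q(x,z)+Q(y,z)$.

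With additivity in hand I would upgrade to full $\Aa$-linearity in two stages. Additivity gives homogeneity over $\Q$, and continuity of $\lambda\mapsto Q(\lambda x,y)$ (again from \eqref{eq-continuity} and compactness of $G$) promotes this to homogeneity over $\rl$. For a single unitary generator $u\in G$, the factorization $q(ux+gy)=u\,q(x+u^{-1}gy)\,u^{*}$, the substitution $g\mapsto uh$ (left-invariance of Haar measure), and the centrality of $\kappa$ (Theorem~\ref{thm-kappa}) give $Q(ux,y)=u\,Q(x,y)$; since $G$ generates $\Aa$ as a real vector space, writing $\alpha=\sum_i c_i u_i$ and using $\rl$-linearity then yields $Q(\alpha x,y)=\alpha Q(x,y)$ for every $\alpha\in\Aa$. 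Hermitian symmetry $Q(x,y)^{*}=Q(y,x)$ follows from the substitution $g\mapsto g^{-1}$ (inversion preserves the Haar measure of a compact group), the relation $q(x+g^{-1}y)=g^{-1}q(gx+y)g$, and the fact that $\kappa$ is central and self-adjoint by Theorem~\ref{thm-kappa}, using that $q(x)^{*}=q(x)$ by axiom (1).

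The remaining and, to my mind, genuinely delicate point is that this $Q$ actually has $q$ as its diagonal. A direct expansion using axiom (2) reduces the claim $Q(x,x)=q(x)$ to the single $\rl$-linear identity $\kappa\bigl(\xi+\int_G g\xi g\,dg\bigr)=\xi$ for $\xi=q(x)$, which is self-adjoint. This is precisely the identity that pins down $\kappa$, and I would verify it not by hand but structurally: applied to the tautological Hermitian form $Q_0(u,v)=uv^{*}$ on $\X=\Aa$, Theorem~\ref{thm-main} forces the identity for every $\xi=uu^{*}$, and by the classification in Theorem~\ref{thm-kappa} the elements $uu^{*}$ span the space of self-adjoint elements of $\Aa$ over $\rl$. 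The identity therefore holds for all self-adjoint $\xi$, in particular for $q(x)$, giving $Q(x,x)=q(x)$. The main obstacle is thus not any isolated computation but the need to feed the structural results (Theorems~\ref{thm-main} and \ref{thm-kappa}) into this diagonal identity; once $\kappa$ is known to be central, self-adjoint, and to satisfy the averaging identity above, everything else is forced by the quadrance axioms and the moment condition \eqref{eq-gdg}.
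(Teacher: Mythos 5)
Your proposal is correct, and its skeleton (define $Q$ by the polarization formula, verify the Hermitian axioms from the parallelogram law and the moment condition $\int_G g\,dg=0$, get uniqueness from Theorem~\ref{thm-main}) is the paper's; but two steps are carried out by genuinely different local arguments, both valid. For additivity, the paper first proves the doubling identities $Q_G(2x,y)=2Q_G(x,y)$ and $Q_G(x,2y)=2Q_G(x,y)$ (the latter via Hermitian symmetry) and then writes $Q_G(x,z)+Q_G(y,z)=\tfrac12 Q_G(x+y,2z)$; you instead apply the parallelogram identity a second time to $x+y+gz$ and $gz$ and kill the extra terms $q(x+y)$ and $q(gz)g=g\,q(z)$ with $\mu_1(G)=0$, which collapses everything in one pass and never invokes symmetry. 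For the diagonal identity $Q(x,x)=q(x)$, the paper computes directly, using Proposition~\ref{prop-secondmoment}'s identity $\int_G g\alpha g\,dg=\alpha^*\mu_2(\Aa)$ together with $\kappa=(1_\Aa+\mu_2(\Aa))^{-1}$; you instead derive the needed $\rl$-linear identity $\kappa\bigl(\xi+\int_G g\xi g\,dg\bigr)=\xi$ by feeding the tautological Hermitian form $Q_0(u,v)=uv^*$ on $\X=\Aa$ into Theorem~\ref{thm-main} and then extending from $\xi=uu^*$ to all self-adjoint $\xi$ by the spanning argument (positive elements span the Hermitian part of $\bigoplus_j M_{n_j}(\D_j)$). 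Your route buys independence from the explicit second-moment computation at the cost of leaning on the structure theorem; the paper's direct computation is shorter once Proposition~\ref{prop-secondmoment} is in hand, and in fact you could replace your spanning argument by a one-line appeal to that proposition. Everything else (necessity, uniqueness, $G$-equivariance, $\Q$- then $\rl$-homogeneity by continuity, symmetry via $g\mapsto g^{-1}$) matches the paper.
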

\subsection{Examples} In the last section of the paper, we show that group algebras of finite groups over the reals, as well as 
real Clifford algebras are polarizable, and determine some polarizing subgroups, as well as the polarization constants. This gives examples of concrete polarization identities in new contexts. The case of classical Clifford numbers $\mathsf{C}_{0,n}$ associated 
to negative definite inner 
products was considered in \cite{giarruso}. We also make a few comments about polarizing subgroups in matrix algebras. 

\subsection*{Acknowledgments:} We thank the referee for careful reading of the paper, many helpful comments and suggestions, and pointing to many references unknown to us, all of which led to improvements of the paper.  We would also like to thank Preeti Raman, Nicolas Young, S. Viswanath, Amritanshu Prasad and John D'Angelo
for their comments and suggestions. Substantial parts of 
this paper are based on the MA thesis of the first-named author under the supervision of the second-named author. The first-named author would like to thank his other committee members, Jordan Watts and Lisa DeMeyer for their suggestions. 

\section{Structure of polarizable algebras}
In this section our goal is to prove the first half  of Theorem~\ref{thm-kappa}, i.e.,
a $*$-algebra is polarizable if and only if it is isomorphic to  a  $*$-direct sum of matrix algebras over real division rings, each endowed with the conjugate-transpose involution.

\subsection{Involutions on matrix algebras and compact unitary groups}  The following result is needed to 
prove the existence of isomorphism \eqref{eq-isom}:
\begin{prop}\label{prop-noncompact}
	 Let $\D=\rl,  \cx$ or $\hx$ be a real division algebra, and let $\sharp$ be an involution on $M_n(\D)$. If the involutive algebra $(M_n(\D), \sharp)$ 
	is polarizable then it is $*$-isomorphic to $(M_n(\D),*)$ where $*$ is the conjugate-transpose involution of $M_n(\D)$. 
	
\end{prop}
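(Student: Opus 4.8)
The plan is to prove that polarizability forces $\sharp$ to coincide with the adjoint involution of a positive-definite $\D$-Hermitian form, and then to normalize that form by a change of basis, so that no case analysis over the types of involution is needed. Write $*$ for the conjugate-transpose involution on $M_n(\D)$ and let $V=\D^n$ be the standard module: a left $M_n(\D)$-module on which matrices act, and simultaneously a right $\D$-vector space, with the two actions commuting. Let $\Gamma=\Gamma(M_n(\D),\sharp)$ denote the unitary group; by hypothesis it is compact and generates $M_n(\D)$.

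First I would use compactness to manufacture an invariant inner product. Beginning from the standard positive-definite $\D$-Hermitian form $h_0(v,w)=\sum_{i}\overline{v_i}w_i$, I would average it over $\Gamma$,
\[ h(v,w)=\int_\Gamma h_0(gv,gw)\,dg, \]
the integral being the $\Aa$-valued Haar integral of \eqref{eq-vectint}. Since each $g\in\Gamma$ acts $\D$-linearly and commutes with the right $\D$-action, and since fixed left/right scalar multiplications and the conjugation of $\D$ pass through the integral by \eqref{eq-intdefn}, the form $h$ is again $\D$-sesquilinear with $h(w,v)=\overline{h(v,w)}$; it stays positive definite because the integrand $h_0(gv,gv)$ is a nonnegative real number, strictly positive for $v\ne 0$. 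Bi-invariance of the Haar measure makes $h$ invariant under $\Gamma$, so every element of $\Gamma$ is an isometry of $h$.

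Next I would invoke the generating hypothesis. To the nondegenerate Hermitian form $h$ attach its adjoint involution $\sigma_h$, characterized by $h(Av,w)=h(v,\sigma_h(A)w)$; nondegeneracy together with the symmetry $h(w,v)=\overline{h(v,w)}$ makes $\sigma_h$ an $\rl$-linear anti-automorphism with $\sigma_h^2=\id$, whose unitary group is precisely the isometry group of $h$. Invariance gives $\Gamma\subseteq\{A\colon A\sigma_h(A)=1\}$, so for every $g\in\Gamma$ we have $\sharp(g)=g^{-1}=\sigma_h(g)$; that is, the two $\rl$-linear involutions $\sharp$ and $\sigma_h$ agree on $\Gamma$. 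As $(M_n(\D),\sharp)$ is polarizable, $\Gamma$ generates $M_n(\D)$ and therefore spans it over $\rl$, whence $\sharp=\sigma_h$ identically.

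Finally I would normalize $h$. A positive-definite $\D$-Hermitian form has an orthonormal basis — Gram--Schmidt applies over each of $\rl,\cx,\hx$ — so there is $P\in\mathrm{GL}_n(\D)$ carrying the standard basis to an $h$-orthonormal one. A direct computation then shows that the inner automorphism $\mathrm{Int}(P)$ satisfies $\mathrm{Int}(P)(A^*)=\sigma_h(\mathrm{Int}(P)(A))$, exhibiting $\mathrm{Int}(P)\colon(M_n(\D),*)\to(M_n(\D),\sigma_h)=(M_n(\D),\sharp)$ as a $*$-isomorphism, which is the assertion. I expect the delicate part to be the averaging step rather than the algebra: one must check that the vector-valued Haar integral genuinely returns a $\D$-Hermitian, positive-definite form — in particular that scalars from the noncommutative $\D=\hx$ may be pulled through the integral on the correct side — and that the adjoint $\sigma_h$ is really an involution. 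Once $\sharp=\sigma_h$ is in hand, the concluding change of basis is routine linear algebra over $\D$.
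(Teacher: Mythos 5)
Your proof is correct, but it takes a genuinely different route from the paper's. The paper proves this proposition by brute force: it first classifies \emph{all} involutions on $M_n(\D)$ up to $*$-isomorphism (Proposition~\ref{prop-involution}, resting on the Skolem--Noether description in Lemma~\ref{lem-scharlau} together with Sylvester's law of inertia and real/complex/quaternionic spectral theorems to reduce $U$ to a normal form), identifies each unitary group with a classical group $O(p,q)$, $Sp(2m,\rl)$, $O(n,\cx)$, $Sp(p,q)$, etc., and then exhibits explicit unbounded sequences (or a failure to generate, for $n=1$) in every group other than $O(n)$, $U(n)$, $Sp(n)$. You instead run the Weyl unitarian trick directly on the standard module $\D^n$: average the standard positive-definite Hermitian form over the compact group $\Gamma$, observe that $\sharp$ and the adjoint involution $\sigma_h$ agree on $\Gamma$ (both send $g$ to $g^{-1}$) and hence everywhere since $\Gamma$ spans $M_n(\D)$ over $\rl$, and finally diagonalize $h$ by Gram--Schmidt so that $\mathrm{Int}(P)$ conjugates $\sigma_h$ to the conjugate-transpose. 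This is essentially the same averaging idea the paper uses in Proposition~\ref{prop-structure1} for the regular representation, transplanted to the standard module, and it is shorter and more conceptual: it needs neither the classification of involutions nor any noncompactness computations, and the points you flag as delicate (pulling fixed scalars of the noncommutative $\hx$ through the vector-valued integral on the correct side, and checking $\sigma_h^2=\id$) all go through, the former because left and right multiplication by a fixed scalar are $\rl$-linear maps and the Haar integral commutes with $\rl$-linear maps as in \eqref{eq-linearity}. What the paper's longer route buys is Proposition~\ref{prop-involution} itself --- the full table of involutions and unitary groups, stated as being of independent interest --- which your argument bypasses entirely.
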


The proof will employ brute force, using a  classification of  all the involutions of the matrix algebras $M_n(\D)$, and a characterization of their unitary groups, and 
then singling out the ones which have polarizable unitary groups.  Since we have not been able to locate a full statement of this classification in 
the literature, we include it below in Proposition~\ref{prop-involution}, and sketch a proof based on the general results in \cite{scharlau} 
on involutions of matrix algebras.

 We begin by 
recalling some facts and introducing some notation to simplify the presentation of this result.
\begin{enumerate}[wide]
	\item On the quaternions $\hx$, it is known that any involution is either the conjugation \eqref{eq-conjugation}, or is a so-called \emph{nonstandard involution} (see \cite[Theorem~2.4.4]{rodman}):  an example of a nonstandard involution is the mapping 
	 $x\mapsto \wt{x}$ given for $x=x_0+x_1i+x_2j+x_3k$ by
		\begin{equation}\label{eq-tilde}
\wt{x}=x_0-x_1i+x_2j+x_3k,
	\end{equation}
	and any other nonstandard involution is conjugate to it. A computation shows that  the nonstandard involution  \eqref{eq-tilde} and 
	the conjugation $x\mapsto \ol{x}$ are related by 
	\begin{equation}\label{eq-bartilde}
	\wt{x}= i\ol{x}i^{-1}= - i \ol{x}i.
	\end{equation}
	Given a matrix $A$ of 
	quaternions, we denote by $\wt{A}$ the matrix obtained by applying the involution $\wt{\cdot}$ to each element of $A$. 
	\item Let $J_1 \in M_2(\rl)$ be the matrix 
	$\displaystyle{ J_1 = \begin{pmatrix} 0 & -1 \\ 1 & 0 \end{pmatrix} }$ 
	and for each positive integer $m$ let $J_m$ be the $2m\times 2m$ block diagonal matrix with $m$ blocks each equal to $J_1$:
	\[J_m = \diag( J_1 , \dots , J_1 ). \] 
	
	\item Let $p,q$ be nonnegative integers such that $p+q=n$. We
	denote by $I_{p,q}$ the diagonal matrix where the first $p$ diagonal entries are  +1s 
	and the remaining  $q$ entries are -1s:
	\[ I_{p,q}=\diag\left({1,\dots, 1},{-1,\dots, -1}\right).\]

\end{enumerate}

\begin{prop} \label{prop-involution} The  rows of the following table give a complete list of nonisomorphic involutions on the algebras $M_n(\D), \D=\rl,\cx, \hx$, and characterize the corresponding unitary groups. More precisely,   for $\D=\rl, \cx$ or $\hx$, if 
	$\sharp$ is an involution on the 
real associative algebra  $M_n(\D)$, then $(M_n(\D),\sharp)$ is $*$-isomorphic to $(M_n(\D), \flat)$, where $\flat$ is an involution in 
some row of the table in the column ``Involution"  associated with the algebra $M_n(\D)$ in the first column. 
\begin{center}

\renewcommand{\arraystretch}{1.5} 
\emph{
\begin{tabular}{|c|c|c|l|l|}
		\hline
{	Algebra}	& {No}. &{Parameters}&  {Involution} &{Unitary Group} \\
		\hline\hline
	\multirow{2}{*}{$M_n(\rl)$}&$1_{p,q}$&$p+q=n, p\geq q$ &$A\mapsto I_{p,q}A^TI_{p,q}$&$O(p,q)=\{A^TI_{p,q}A=I_{p,q}\}$ \\
	\cline{2-5} &2&$n=2m$ even& $A\mapsto - J_m A^T J_m$ &$Sp(2m, \rl)=\{A^TJ_{m}A=J_{m}\}.$\\\hline
\multirow{3}{*}{$M_n(\cx)$}	&$3_{p,q}$& $p+q=n, p\geq q$&$A\mapsto I_{p,q}A^*I_{p,q}$ & $U(p,q)=\{A^*I_{p,q}A=I_{p,q}\}$ \\
\cline{2-5} &4&&$A\mapsto A^T$&$O(n,\cx)= \{A^TA=I\}$\\
\cline{2-5} &5&$n=2m$ even&$A\mapsto- J_m A^T J_m$ &$Sp(2m, \cx)=\{A^TJ_{m}A=J_{m}\}.$\\
		\hline
\multirow{2}{*}{$M_n(\hx)$}	&$6_{p,q}$& $p+q=n, p\geq q$&$A\mapsto I_{p,q}A^*I_{p,q}$ & $Sp(p,q)=\{A^*I_{p,q}A=I_{p,q}\}$ \\  
\cline{2-5} &7&&$A\mapsto\wt{A}^T$& ${Sp_{\mathrm{NS}}(n)=}\{\wt{A}^TA=I\}$\\
		\hline
\end{tabular}}\end{center}
The rows numbered $1_{p,q}, 3_{p,q}$  and $6_{p,q}$ each  stand for multiple rows (and therefore multiple non $*$-isomorphic $*$-algebras), with a separate row for each ``signature" $(p,q)$.

\end{prop}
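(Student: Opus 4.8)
The plan is to reduce the classification to that of nondegenerate Hermitian and skew-Hermitian forms over $\D$, exploiting the standard correspondence between involutions on a central simple algebra and such forms (see \cite{scharlau}). I would begin by computing centers: both $M_n(\rl)$ and $M_n(\hx)$ have center $\rl$, while the real algebra $M_n(\cx)$ has center $\cx$. An involution is $\rl$-linear and preserves the center, so its restriction there is an $\rl$-automorphism of order at most two; this restriction must be the identity on $M_n(\rl)$ and $M_n(\hx)$, whereas on $M_n(\cx)$ it is either the identity (\emph{first kind}) or complex conjugation (\emph{second kind}). The kind is a $*$-isomorphism invariant, since conjugating an involution by any algebra automorphism leaves its action on the center unchanged, $\Aut(\cx/\rl)$ being abelian.

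Next I would fix in each case a reference involution $\tau_0$ of the appropriate kind (the conjugate-transpose for $M_n(\rl)$, $M_n(\hx)$ and the second kind over $\cx$; the ordinary transpose for the first kind over $\cx$) and apply the Skolem--Noether theorem as in \cite{scharlau} to write an arbitrary involution $\sharp$ of the same kind as $\sharp(A) = g\,\tau_0(A)\,g^{-1}$, with $g$ invertible and determined up to a central scalar. The relation $\sharp^2 = \id$ forces $\tau_0(g) = \epsilon g$ with $\epsilon = \pm 1$, so $g$ is $\tau_0$-Hermitian or $\tau_0$-skew-Hermitian, and $\sharp_g$, $\sharp_{g'}$ are $*$-isomorphic exactly when $g$ and $g'$ are congruent up to a scalar. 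This turns the problem into the similarity classification of (skew-)Hermitian forms over $\D$. One simplification I would record immediately: for the second kind over $\cx$, multiplying a skew-Hermitian $g$ by the central scalar $i$ produces a Hermitian matrix and the same involution, so only Hermitian forms need be considered there; over $\rl$ and over $\hx$, by contrast, no central (real) scalar interchanges the symmetric/skew (respectively Hermitian/skew-Hermitian) cases, so both types persist.

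I would then invoke the classical form theory over each $\D$. Over $\rl$: symmetric forms are classified by signature, giving $g = I_{p,q}$ (row $1_{p,q}$), and alternating forms are unique, giving $g = J_m$ with $n = 2m$ (row $2$). Over $\cx$, first kind: symmetric bilinear forms are all equivalent to $I$ (row $4$) and alternating ones to $J_m$ (row $5$); second kind: Hermitian forms are classified by signature, giving $I_{p,q}$ (row $3_{p,q}$). Over $\hx$: Hermitian forms with respect to conjugation are classified by signature, giving $I_{p,q}$ (row $6_{p,q}$). In each case, after normalizing $g$ the involution $A^\sharp$ takes the explicit form in the ``Involution'' column, and the unitary group $\{A : A A^\sharp = I\} = \{A : A^\sharp = A^{-1}\}$ is read off as the displayed isometry group, the two ways of writing each defining relation agreeing via $XY = I \iff YX = I$. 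Non-isomorphism of the listed involutions then follows from the invariance of the kind, of the type (equivalently the sign $\epsilon$, detectable from the dimension of the space of $\sharp$-symmetric elements), and of the signature.

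I expect the quaternionic skew-Hermitian case (row $7$) to be the main obstacle and to need a direct argument. Here I would show that all nondegenerate skew-Hermitian forms over $\hx$ are equivalent: diagonalizing such a form yields nonzero purely imaginary diagonal entries, and the orbit of a nonzero pure quaternion $v$ under $v \mapsto \ol c\, v\, c$, $c \in \hx^\times$, is the whole set of nonzero pure quaternions---writing $c = r u$ with $r > 0$ and $u$ a unit quaternion gives $\ol c v c = r^2 u^{-1} v u$, of arbitrary modulus and direction---so we may normalize to $g = i\,I_n$. The associated involution is $A \mapsto (iI)\,\ol A^T (iI)^{-1} = i\,\ol A^T i^{-1}$, which by the identity $\wt x = i\ol x i^{-1}$ of \eqref{eq-bartilde} is exactly the entrywise nonstandard transpose $A \mapsto \wt A^T$ of row $7$, with unitary group $\{A : \wt A^T A = I\} = Sp_{\mathrm{NS}}(n)$. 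The delicate points are the orbit computation establishing uniqueness of the skew-Hermitian class, and a dimension count of $\sharp$-symmetric elements confirming that this involution is of a different type from the Hermitian involution $6_{p,q}$, and hence genuinely new.
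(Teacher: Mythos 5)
Your proposal is correct and follows essentially the same route as the paper: the Skolem--Noether/Scharlau reduction of involutions to multiplicative congruence classes of $\tau_0$-(skew-)Hermitian matrices, the kind dichotomy over $\cx$ with the observation that a central factor of $i$ absorbs the second-kind skew case, classical inertia/alternating-form theory over $\rl$ and $\cx$, and normalization of the quaternionic skew-Hermitian class to $iI$ yielding the nonstandard involution $A\mapsto\wt A^T$ via \eqref{eq-bartilde}. The only cosmetic difference is that in the quaternionic skew case you diagonalize and then use the transitivity of $v\mapsto \bar c\,v\,c$ on nonzero pure quaternions, where the paper invokes the quaternionic spectral theorem and rescales by $\diag(\sqrt{r_j})$; these amount to the same normalization.
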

If $R$ is an involution of $M_n(\D)$, then the restriction of $R$ to the center $\mathbb{K}$ of $M_n(\D)$ is a field automorphism of 
$\mathbb{K}$ fixing $\rl$, which is either the identity, or is of order 2. 
When $\mathbb{\D}=\rl$ or $\hx$, then $\mathbb{K}$ can be identified with $\rl$, so all involutions restrict to the identity. 
On the other hand, if $\D=\cx$, the center $\mathbb{K}$ consists of the complex scalar matrices, so an
	 involution can be either restrict to the identity (a so called involution of the \emph{first kind}),
	 or its restriction to the center $\mathbb{K}\cong \cx$
can be the complex conjugation map (an involution is of the \emph{second kind}).

The following is the special case for 
the base field $\rl$ of 
a well-known result on the classification of involutions of central simple algebras over a field  (see \cite[Chapter 8, Theorem 7.4]{scharlau}). 
The proof is straightforward using the Skolem-Noether characterization of automorphisms of simple algebras, and we refer the reader 
to \cite{scharlau} for details:
\begin{lem}
\label{lem-scharlau}	
Let  $\D=\rl,\cx$ or $\hx$ and $n$ a positive integer.
	\begin{enumerate}
		\item Every involution $R$ on $M_n(\D)$ is of the form
		\begin{equation}
			\label{eq-SU}
			A^R = U A^S U^{-1} , \quad \text{for all  } A\in M_n(\D)
		\end{equation}	 
		 where
		 \begin{enumerate}
		 	\item $S$ is the
		 	  transposition map on $M_n(\D)$ if  $\D=\rl$ or  $\D=\cx$ and $R$ is of the first kind, and the
		 conjugate-transpose map  otherwise, and 
		 	\item $U$ is invertible in $M_n(\D)$ and satisfies $U^S=\pm U$.
		 \end{enumerate}
	 
		\item If $\sharp, \flat$ are involutions on $M_n(\D)$, then  $(M_n(\D),\sharp)$ and $(M_n(\D),\flat)$ are $*$-isomorphic if and only if  the following  conditions hold:
		
		\begin{enumerate}
		\item If $\D=\cx$ then both $\flat$ and $\sharp$ are of the first kind or of the second kind.
		\item If 
			\[ 	A^\sharp = U A^S U^{-1}, \quad 	A^\flat = V A^S V^{-1}\]
			are the representations of the two involutions given by \eqref{eq-SU} (with $S$ as in 
			part 1(a)), then we have
			\begin{equation}
				\label{eq-multcong}
				V = \lambda W U W^S
			\end{equation}
			with  $\lambda\not=0$ in the center of $\D$ and $W$ inveritible in $M_n(\D)$.
		\end{enumerate}
	\end{enumerate}
\end{lem}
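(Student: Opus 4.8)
The plan is to reduce everything to the Skolem--Noether theorem, which asserts that every automorphism of a central simple algebra fixing its center pointwise is inner. Write $\mathbb{K}$ for the center of $M_n(\D)$, so that $\mathbb{K}=\rl$ when $\D=\rl$ or $\hx$ and $\mathbb{K}=\cx$ when $\D=\cx$; in each case $M_n(\D)$ is central simple over $\mathbb{K}$. For part (1) I would first fix the reference anti-automorphism $S$ dictated by the kind of $R$: the transpose when $\D=\rl$, or when $\D=\cx$ and $R$ restricts to the identity on $\mathbb{K}$ (first kind), and the conjugate-transpose otherwise (when $\D=\hx$, or $\D=\cx$ and $R$ restricts to complex conjugation on $\mathbb{K}$). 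The point of matching the kind is that the composite $R\circ S$ is then an algebra automorphism of $M_n(\D)$ fixing $\mathbb{K}$ pointwise, since $R$ and $S$ restrict to one and the same involution of $\mathbb{K}$, whose square is the identity. Skolem--Noether then produces an invertible $U$ with $(R\circ S)(A)=UAU^{-1}$, and since $S$ is an involution this rewrites as $A^R=UA^SU^{-1}$, giving the form \eqref{eq-SU}.

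Next I would extract the constraint $U^S=\pm U$ from the requirement $R\circ R=\id$. Substituting the formula for $R$ into itself and using $(XY)^S=Y^SX^S$ together with $(A^S)^S=A$ gives $R(R(A))=\big(U(U^S)^{-1}\big)A\big(U(U^S)^{-1}\big)^{-1}$, so $R^2=\id$ forces $U(U^S)^{-1}=\lambda$ to be a central scalar, i.e. $U=\lambda U^S$. Applying $S$ once more and substituting back yields $\lambda\lambda^S=1$. When $S$ is of the first kind, $\lambda^S=\lambda$ and hence $\lambda=\pm 1$, which is exactly $U^S=\pm U$. When $S$ is of the second kind ($\D=\cx$, conjugate-transpose), $\lambda^S=\overline{\lambda}$ so $\lvert\lambda\rvert=1$; here I would rescale $U$ by a central unit $\mu$, which leaves the inner automorphism unchanged but replaces $\lambda$ by $\lambda\mu/\overline{\mu}$, and a suitable choice of phase $\mu$ normalizes $\lambda$ to $1$, giving $U^S=U$. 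In all cases $U^S=\pm U$, proving part (1).

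For part (2), a $*$-isomorphism $\psi\colon (M_n(\D),\sharp)\to (M_n(\D),\flat)$ carries the center to the center and intertwines the restrictions of the two involutions to $\mathbb{K}$; since $\Aut(\cx/\rl)$ is abelian of order two, these restricted involutions must coincide, which is the necessity of condition (a). Granting (a), $\psi$ is an algebra automorphism of $M_n(\D)$ inducing the identity or, for $\D=\cx$, possibly complex conjugation on $\mathbb{K}$; composing with the conjugation automorphism in the latter case, I may assume $\psi$ is $\mathbb{K}$-linear, hence inner by Skolem--Noether, say $\psi(A)=WAW^{-1}$. Writing out the intertwining identity $\psi(A^\sharp)=\psi(A)^\flat$ with $A^\sharp=UA^SU^{-1}$ and $A^\flat=VA^SV^{-1}$, and letting $A^S$ range over all of $M_n(\D)$, I obtain $(WU)B(WU)^{-1}=\big(V(W^S)^{-1}\big)B\big(V(W^S)^{-1}\big)^{-1}$ for every $B$, so $WUW^S$ and $V$ differ by a central scalar, i.e. $V=\lambda WUW^S$. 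The converse is the straightforward verification that this congruence makes $\psi(A)=WAW^{-1}$ into a $*$-isomorphism.

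I expect the main obstacle to be the careful bookkeeping of the center action in the $\D=\cx$ case: ensuring the kind is matched when choosing $S$, that Skolem--Noether is applied to a genuinely $\mathbb{K}$-linear automorphism (after absorbing a possible complex conjugation), and that the residual central scalar $\lambda$ is normalized correctly by the rescaling of $U$. Everything else is formal manipulation with anti-automorphisms, the only external input being the Skolem--Noether theorem, exactly as indicated in \cite{scharlau}.
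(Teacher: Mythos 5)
Your proposal is correct and follows exactly the route the paper itself indicates: the paper gives no independent proof of this lemma but states that it follows from the Skolem--Noether theorem and defers the details to \cite{scharlau}, and your argument (applying Skolem--Noether to the automorphism $R\circ S$ after matching kinds, extracting $U^S=\pm U$ from $R^2=\mathrm{id}$ via the central scalar $\lambda$ with $\lambda\lambda^S=1$, and deriving the multiplicative congruence $V=\lambda WUW^S$ from the intertwining relation) is precisely that standard proof. The bookkeeping you flag in the $\D=\cx$ case, including the normalization of $\lambda$ by rescaling $U$ and the reduction to a $\mathbb{K}$-linear automorphism in part (2), is handled correctly.
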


\begin{proof}[Proof of Proposition~\ref{prop-involution}]
	In view of  Lemma~\ref{lem-scharlau}, the proof  is reduced to the determination of equivalence classes in $M_n(\D)$ under the 
	equivalence relation \eqref{eq-multcong} (so-called \emph{multiplicative congruence}).  For $\D= \rl, \cx$, wherever possible, we 
	 will  use classical results in linear algebra  which 
	classify matrices up to \emph{congruence} (i.e., with $\lambda=1$ in \eqref{eq-multcong}). 
	In the case of the quaternions, we will need to use facts about spectral theory over $\hx$. 
	Once involution is reduced to a normal form, the computation of the unitary group of the last column is straightforward.

\textbf{	The case $\D=\rl$.} Thanks to Lemma~\ref{lem-scharlau} each involution $M_n(\rl)$ is  of the form $A\mapsto UA^T U^{-1}$ where $A\mapsto A^T$ is the transposition operation, and $U^T=\pm U$, i.e. $U$ is either symmetric or anti-symmetric. We consider the two cases separately.

\begin{enumerate}[wide]
	\item \emph{$U$ symmetric.} The eigenvalues of the symmetric and invertible matrix $U$ are real and nonzero, and suppose that $U$ has $p$ positive and $q=n-p$ negative eigenvalues. 
	 By the real form of  Sylvester's Law of of Inertia (\cite[Theorem~6.11.1]{herstein}), the invertible symmetric matrix $U$ is congruent to the matrix $I_{p,q}$, i.e., there is a matrix $W$ such that $I_{p,q}=WUW^T$. If $p\geq q$, this
	 shows that $I_{p,q}$ and $U$ are multiplicatively congruent. If $q>p$ then the relation $I_{q,p}= (-1) WUW^T$ holds, showing that $U$ is multiplicatively congruent to $I_{q,p}$. Case $1_{p,q}$ of the table follows, on noting that $I_{p,q}^{-1}=I_{p,q}$.
	\item \emph{$U$ is anti-symmetric.} Using the real spectral theorem, it is not difficult  to show that  (see \cite[Corollary~2.5.11]{horn})
	$n=2m$ is even and 	there is an orthogonal matrix $V\in O(n)$ such that $U=V\diag(r_1J_1,\dots, r_mJ_1)V^T$, where each $r_j>0$.	Taking $W=V\diag (\sqrt{r_1}, \sqrt{r_1}, \dots, \sqrt{r_m}, \sqrt{r_m})$, we see that $U=WJ_m W^T$, so $U$ is (multiplicatively) congruent to $J_m$, and consequently by Lemma~\ref{lem-scharlau}, the involution $\sharp$ is equivalent to the involution $A\mapsto J_m A^T J_m^{-1}=-J_m A^T J_m$ of  row 2, since $J_m^{-1}=-J_m$. 
	\end{enumerate}

\textbf{The case $\D=\cx$.} Given an involution on $M_n(\cx)$, it can be either of the first or second kind. We consider these two cases separately.

First suppose that we are given an involution of the first kind on $M_n(\cx)$, i.e., it acts as the identity on the complex scalar matrices that form the center of $M_n(\cx)$. Then by Lemma~\ref{lem-scharlau},  such an involution is of the form $A\mapsto UA^TU^{-1}$ where $A\mapsto A^T$ is the transposition involution on complex matrices and 
$U^T=\pm U$. Again we consider the two cases:
\begin{enumerate}[wide]
\item \emph{$U$ symmetric, i.e. $U^T=U$.} By a classical result of linear algebra (see \cite[Theorem~4.5.12]{horn})  for the nonsingular 
complex symmetric  matrix $U$, there is an invertible $W$ such that $ WUW^T=I$,  which shows that $U$ is (multiplicatively) congruent to the identity matrix $I$. The case of row 4 follows.
\item \emph{$U$ is anti-symmetric, i.e. $U^T=-U$.} By an application of the spectral theorem (see \cite[Theorem~7, p. 481]{hua})
we can show that there is a $V$ such that we have \[U=V\diag(r_1J_1,\dots, r_mJ_1)V^T\] where $n=2m$ (and hence even) and each $r_j>0$. 
Now we take \[W=V\diag (\sqrt{r_1}, \sqrt{r_1}, \dots, \sqrt{r_m}, \sqrt{r_m}),\] and see that $U=WJ_m W^T$, so $U$ is (multiplicatively) congruent to $J_m$, and consequently by Lemma~\ref{lem-scharlau}, we get row 5.

\end{enumerate}

Now suppose that we are given an involution of the second kind on $M_n(\cx)$, so that by Lemma~\ref{lem-scharlau}, we see that such an involution is of the form $A\mapsto UA^*U^{-1}$, where $A\mapsto A^*$ is the standard conjugate-transpose involution of the algebra $M_n(\cx)$, and $U^*=\pm U$. We consider the two cases:
\begin{enumerate}[wide]
	\item \emph{$U$ is Hermitian, i.e. $U^*=U.$} This case is similar to the case of row number $1_{p,q}$, using the Hermitian version of Sylvester's Law of Inertia (\cite[Theorem~4.5.8]{horn}) , which shows that there is a $W\in GL_n(\cx)$ such that $I_{p,q}=WUW^*$, where $p$ and $q$ are the numbers of positive and negative eigenvalues of the matrix $U$. If $p<q$ we again write $(-1)WUW^*=I_{q,p}$. This gives row $3_{p,q}$ of the table. 
	
	\item \emph{$U$ is Skew-Hermitian, i.e. $U^*=-U$} Since $(iU)^*=iU$, by the arguments of the preceding case there is a  signature $(p,q)$ and a matrix $W$ such that
	\[ I_{p,q}=W(iU)W^*= iWUW^*\]
	If $p<q$ we again have $I_{q.p}= -iWUW^*$, so we have that $U$ is multiplicatively congruent to one of the matrices $I_{p,q}$ with $p\geq q$, and we are again in row $3_{p,q}$.
\end{enumerate}

\textbf{The case $\D=\hx$.}   We will use the following quaternionic version of the spectral theorem, where the quaternions of the form 
$a+ib, a,b \in \rl$ (i.e. quaternions with vanishing $j$ and $k$ components) are identified with the complex numbers $\cx$:

\textbf{Result:} (see \cite{spectral}, and  cf. \cite[Theorem~4.1.12]{rodman})\emph {  Suppose $U \in M_n(\hx)$ is a normal matrix, i.e. $U^* U = UU^*$. Then there exists a $V \in Sp(n)$ and a diagonal matrix $D$ with entries in the closed upper half plane in $\cx\subset \hx$ such that $U=VDV^*$. }

By Lemma~\ref{lem-scharlau} every involution on $M_n(\hx)$ is of the form $A\mapsto UA^*U^{-1}$ where $U \in GL_n(\hx)$ is such that $U^* = \pm U$. In both cases, $U$ is normal so that we can apply the quaternionic spectral theorem as stated above.

\begin{enumerate}[wide]
	\item\emph{$U$ is quaternionic Hermitian, i.e. $U^*=U$:} Then there exists a $V \in Sp(n)$ such that $V^* U V = D$, a diagonal with values in the upper half plane. Further $D^* = V^* U^*V = V^* U V = D$ and it follows that $D$ is a real diagonal matrix. Applying the argument used to deduce
Sylvester's law of inertia from the spectral theorem (see the proof of \cite[Theorem~6.11.1]{herstein}), we see that there are $p,q\geq 0, p+q=n$
and $W \in GL_n(\hx)$ such that $I_{p,q}=W U W^*$. Again if $q>p$, we can write $I_{q,p}=(-1)W U W^*$, showing that $U$ is multiplicatively 
congruent to $I_{p,q}$ for some $p\geq q$.
	
	\item \emph{$U$ is quaternionic skew-Hermitian, i.e. $U^*=-U$} 
If $U^* = -U$, then there exists a $V \in Sp(n)$ such that $V^* U V = D$, a diagonal matrix with values in the upper half plane. In this case, however, $D^* = V^* U^* V = -V^* U V = -D$ and it follows that $D = \diag(ir_1,\dots, ir_n)$ where each $r_j>0$ is a positive real number.
 Setting 
\[W = V\diag\left(\sqrt{r_1}, \dots, \sqrt{r_n}\right) \] yields $U = W (iI) W^*,$  so that  the algebra $M_n(\hx)$ with this involution is $*$-isomorphic 
to the algebra $(M_n(\hx), \sharp)$, where $A^\sharp= (iI)A^*(iI)^{-1}=iA^*i^{-1}$. Denoting the entry in the $\lambda$-th row and $\mu$-th 
column of the matrix $A$ by $(a_{\lambda\mu})$, we see using \eqref{eq-bartilde} that the $(\lambda, \mu)$-th entry of $iA^*i^{-1}$ is 
\[ i\cdot\ol{a_{\mu,\lambda}}\cdot i^{-1}= \wt{a}_{\mu,\lambda},\]
where the tilde denotes the nonstandard involution of   \eqref{eq-tilde}. Therefore, we get the involution $A\mapsto \wt{A}^T$ of 
row (7) of the table (cf. \cite[section~3.6]{rodman}) and it is clear that the corresponding unitary group is $\{\wt{A}^TA=I\}$, which
does not seem to have a standard name in the literature, but perhaps may be called the nonstandard symplectic group $Sp_{\mathrm{NS}}(n)$.
\end{enumerate}
 \end{proof}

\begin{proof}[Proof of Proposition~\ref{prop-noncompact}]  We will show that if $\sharp$ is not the conjugate-transpose involution on $M_n(\D)$, then the algebra $(M_n(\D),\sharp)$ is not polarizable, i.e., the unitary groups in the table of Proposition~\ref{prop-involution}
	except those in rows $1_{n,0}, 3_{n,0}$ and $6_{n,0}$ are not compact  or do not generate
	$M_n(\D)$.

	When $n=1$, and $\D=\rl$, then the only possibility is row $1_{1,0}$ so there is nothing to show. For $\D=\cx$ the only possibility (except $3_{1,0}$) is row 4, where the unitary group is \[\Gamma(M_1(\cx), \mathrm{id})=\{z\in M_1(\cx)=\cx: z^2=1\}=\{\pm 1\}.\]
	 The linear span of this group is $\rl$ and therefore it is not polarizing.  For $\D=\hx$, the  only possibility we need to consider is 7, where the unitary group is
	 \[\{a\in \hx: \wt{a}a=1_\hx\}.\]
	  It is easy to see that a quaternion satisfying this system of four real equations is of the form $a=\cos\theta+i \sin \theta$ for some $\theta\in \rl$. Therefore, the unitary group in this case generates a subalgebra of $\hx$ isomorphic to $\cx$, and not the whole of $\hx$. The result is proved when $n=1$.

	Now  let $n\geq 2$. 
	In the numbering system of the table in Proposition~\ref{prop-involution},  the conjugate transpose involutions correspond to rows $1_{n,0}$ (for
$\rl$), $2_{n,0}$ (for $\cx$) and $6_{n,0}$ (for $\hx$).
We will show that all the other groups in the table are noncompact, thus showing that they are nonpolarizable. If $n=2$ and $p=q=1$, for number $1_{1,1}$ it is well-known that the group $O(1,1)$ is not compact, which can be seen for example by looking at the sequence of matrices
\[ \dfrac{1}{2}\begin{pmatrix}
	n+\dfrac{1}{n}& 	n-\dfrac{1}{n}\\
	& \\
	n-\dfrac{1}{n}&	n+\dfrac{1}{n}
\end{pmatrix}\]
in $O(1,1)$ which does not have a limit point. Now since $O(1,1)$ can be embedded as a closed subgroup of $O(p,q)$ if $n\geq 3$ it follows that 
$O(p,q)$ is noncompact if $p\geq q\geq 1$.

The inclusions $O(p,q)\subset U(p,q)\subset Sp(p,q)$ (obtained by extending scalars) show that rows $3_{p,q}$ and $6_{p,q}$ also have noncompact
unitary groups if $p,q\geq 1$.

In row no. 4, the group $O(n,\cx)$ is a complex affine variety in $\cx^{n^2}=M_n(\cx)$ of dimension equal to that of the Lie algebra $\{A+A^T=0\}$ which is therefore $\frac{1}{2}n(n-1)$. If $n\geq 2$, this is at least 1, so $O(n,\cx)$ is  noncompact.

The map $f:\cx\to \hx$ given by $f(x+iy)=x+jy$ is an $\rl$-algebra monomorphism, and satisfies $f(x)=\wt{f(x)}$ for each $x$, where the tilde has 
the same meaning as in \eqref{eq-tilde}. We may therefore 
define an algebra homomomorphism  $f:M_n(\cx)\to M_n(\hx)$ by applying it elementwise. This maps the group $O(n,\cx)$ into a closed subgroup of the group $Sp_{\mathrm{NS}}(n)=\{\wt{A}^TA=I\}$, which is therefore noncompact.

Using the formula for the inverse of a $2\times 2$ matrix, we see that a matrix $A\in M_2(\rl)$ satisfies the condition $A^TJ_1 A=J_1$ if and only if  $\det A=1$, so $Sp(2,\rl)=SL(2,\rl)$. The noncompactness of $SL(2,\rl)$ can be seen, e.g., by looking at the sequence of matrices
$\left\{ \begin{pmatrix}
	n&1 \\n-1&1
\end{pmatrix}\right\}
$. Using the obvious inclusion of $Sp(2,\rl)$ in $Sp(2m, \rl)$ we see that the latter is noncompact for each $m\geq1$. Similarly, the inclusion of $Sp(2m,\rl)$ in $Sp(2m, \cx)$ shows that the latter is noncompact. This completes the proof.
\end{proof}

\subsection{Semisimplicity of polarizable algebras} We begin with the following simple application of the Weyl averaging trick:

		\begin{prop}\label{prop-structure1}
	Let $(\Aa,*)$ be an involutive algebra and suppose there is  a compact subgroup $G$ of $\Gamma(\Aa,*)$ which generates $\Aa$. Then
	\begin{enumerate}
	 \item there exists an inner product on the $\rl$-vector space $\Aa$:  \[\ipr{\cdot,\cdot} \from \Aa \times \Aa \to \rl\]
	 such that 
	for  $\alpha,\beta,\eta \in \Aa$ we have
		\begin{equation}
		\label{eq-iprb}
		\ipr{\eta \alpha ,  \beta} = \ipr{\alpha , \eta^* \beta}.
	\end{equation}
\item  the $*$-algebra $(\Aa,*)$ is $*$-isomorphic to a $*$-subalgebra of $(M_n(\rl), T)$, were $n=\dim_\rl \Aa$ (as a vector space) and $T$ denotes the transposition involution.
\item  the $*$-algebra $(\Aa,*)$ is polarizable. 
	\end{enumerate}
\end{prop}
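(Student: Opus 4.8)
The plan is to produce, by the Weyl averaging trick, a single $G$-invariant inner product on $\Aa$, and then to extract all three conclusions from it. For part (1), I would start from an arbitrary inner product $\ipr{\cdot,\cdot}_0$ on the finite-dimensional real vector space $\Aa$ and average it over $G$:
\[ \ipr{\alpha,\beta} = \int_G \ipr{g\alpha, g\beta}_0 \, dg. \]
The integrand is a continuous function of $g$ and $G$ is compact, so this real-valued Haar integral exists; bilinearity and symmetry are inherited from $\ipr{\cdot,\cdot}_0$, and positive definiteness follows because $1_\Aa \in G$. By bi-invariance of Haar measure each left-translation $L_g \colon \alpha \mapsto g\alpha$ with $g \in G$ is orthogonal for $\ipr{\cdot,\cdot}$. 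Since $g$ lies in the group $G$ and is unitary, $g^* = g^{-1}$, so $\ipr{g\alpha, \beta} = \ipr{\alpha, g^{-1}\beta} = \ipr{\alpha, g^*\beta}$; that is, the adjoint of $L_g$ is $L_{g^*}$.

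To upgrade this from $G$ to all of $\Aa$, I would use that $G$ generates $\Aa$ and is closed under the algebra operations, so $\Span_\rl G = \Aa$. Writing $\eta = \sum_i c_i g_i$ with $c_i \in \rl$ and $g_i \in G$, and using that the involution is $\rl$-linear (so $\eta^* = \sum_i c_i g_i^*$), linearity gives $\ipr{\eta\alpha, \beta} = \ipr{\alpha, \eta^* \beta}$, which is \eqref{eq-iprb}.

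For part (2), fix an orthonormal basis of $(\Aa, \ipr{\cdot,\cdot})$ and consider the left regular representation $L \colon \Aa \to \End_\rl(\Aa) \cong M_n(\rl)$, $\eta \mapsto L_\eta$. This is an algebra homomorphism, and it is injective because $\Aa$ is unital ($L_\eta(1_\Aa) = \eta$). In an orthonormal basis the matrix of the adjoint of an operator is the transpose of its matrix, so \eqref{eq-iprb} — which says precisely that the adjoint of $L_\eta$ is $L_{\eta^*}$ — becomes $L_{\eta^*} = (L_\eta)^T$. Hence $L$ is a $*$-isomorphism of $(\Aa,*)$ onto a $*$-subalgebra $\bc$ of $(M_n(\rl), T)$, with $1_\bc = I_n$.

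For part (3), I would transport the unitary group through this $*$-isomorphism, which is a homeomorphism (a linear isomorphism of finite-dimensional spaces) carrying unitaries to unitaries. It therefore suffices to see that $\Gamma(\bc, T) = \{A \in \bc : AA^T = I_n\} = \bc \cap O(n)$ is compact, which is immediate as the intersection of the closed subspace $\bc$ with the compact group $O(n)$. Thus $\Gamma(\Aa,*)$ is compact; it contains $G$, which generates $\Aa$, so $\Gamma(\Aa,*)$ generates $\Aa$ as well, and $(\Aa,*)$ is polarizable by Definition~\ref{defn-pol-algebra}. The only step needing real care is the passage from $G$-orthogonality to \eqref{eq-iprb} on all of $\Aa$, which hinges on the two facts $g^* = g^{-1}$ for $g \in G$ and $\Span_\rl G = \Aa$; the rest is standard bookkeeping around averaging and the elementary fact that real matrix unitary groups embed in $O(n)$.
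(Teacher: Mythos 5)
Your proposal is correct and follows essentially the same route as the paper: Weyl averaging of an arbitrary inner product to obtain \eqref{eq-iprb}, the left regular representation into $\End_\rl(\Aa)\cong M_n(\rl)$ with adjoint/transpose matching, and compactness of $\Gamma(\Aa,*)$ via its identification with the preimage (equivalently, the image intersected with $O(n)$) of the orthogonal group. The only cosmetic difference is that you justify positive definiteness via $1_\Aa\in G$ where one could equally note the integrand is strictly positive for all $g$; this does not affect correctness.
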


\begin{proof}\begin{enumerate}[wide]
		\item Let $\ipr{\cdot,\cdot}_0 \from \Aa \times \Aa \to \rl$ be a real inner product on $\Aa$ , i.e. a positive-definite, symmetric, bilinear form.  Then
	\[ \ipr{\alpha,\beta} := \int_G \ipr{g \alpha , g \beta  }_0 dg \]
	is also an  inner product, and by Haar invariance we have for this inner product
	for each $g\in G$, $\alpha, \beta\in \Aa$ that
		\begin{equation}
		\label{eq-ipra}
		\ipr{g\alpha , g \beta} = \ipr{\alpha , \beta}.
	\end{equation}  
Therefore
		\[ \ipr{h \alpha, \beta } =  \ipr{h \alpha, h h^* \beta } = \ipr{\alpha, h^* \beta }\]
	for $\alpha, \beta \in \Aa$ and $h \in G$. By the $\rl$-bilinearity of the inner product, and the fact that $\Aa=\mathrm{span}_{\rl}G$  the equality \eqref{eq-iprb}
	follows. 
	\item Let 
	\begin{equation}\label{eq-rho}
\rho:\Aa \to \End_{\rl}(\Aa)	\end{equation}
	be the injective $\rl$-algebra homomorphism given by $\rho(\alpha)(\beta)=\alpha\beta$. 
On the endomorphism algebra $	\End_{\rl}(\Aa)$ there is an involution 
$\sharp$ given by taking adjoints with respect to the inner product of part (1) above, i.e. 
for an $\rl$-linear map $A:\Aa\to \Aa$, the map
$A^\sharp:\Aa\to \Aa$ is characterized by the fact that $\ipr{A\alpha, \beta}= \ipr{\alpha, A^\sharp \beta}$. Notice that \eqref{eq-iprb} says that for each $\eta\in \Aa$, we have $\rho(\eta)^\sharp= \rho(\eta^*)$, i.e., $\rho$ is a $*$-homomorphism
from $(\Aa, *)$ into $(\End_{\rl}(\Aa), \sharp)$, which is known to be injective, so that $(\Aa, *)$ is isomorphic to a $*$-subalgebra of $(\End_{\rl}(\Aa), \sharp)$. Now choosing a basis of the $\rl$-vector space
$\Aa$,  the algebra $(\End_{\rl}(\Aa), \sharp)$ is $*$-isomorphic to $(M_n(\rl), *)$ and the result follows. 

\item Identifying $\End_\rl(\Aa)$ with the matrix algebra $M_n(\rl)$ by choosing a basis, we see that the map $\rho$ of \eqref{eq-rho}
\[ \Gamma(\Aa,*)=\rho^{-1}(O(n)),\]
where $O(n)$ is the orthogonal group. Since $\rho$, being linear and injective, is topologically a proper map, it follows that $\Gamma(\Aa, *)$ is compact. Since the subgroup $G$ of 
$\Gamma(\Aa, *)$ generates $\Aa$, it follows a fortiori that $\Gamma(\Aa, *)$ generates $\Aa$.  The result follows. 
		\end{enumerate}
	\end{proof}

We are now ready to prove the first half of Theorem~\ref{thm-kappa}, which we state as a separate proposition for convenience. It can be thought of as a version of Maschke's theorem.
\begin{prop}\label{prop-kappa1}
		Let $(\Aa,*)$ be a polarizable $*$-algebra.  Then there is a $*$-isomorphism 
	\begin{equation}\label{eq-isom1}
		\phi:  \bigoplus_{j=1}^N (M_{n_j}(\D_j),*)\to (\Aa,*),\	\end{equation}
	where in the left is the $*$-direct sum of a    finite number of matrix algebras over real division rings, 
	each endowed with the ``conjugate-transpose" involution.  
\end{prop}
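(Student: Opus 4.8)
The plan is to deduce Proposition~\ref{prop-kappa1} from the classical structure theory of finite-dimensional associative $\rl$-algebras, using Proposition~\ref{prop-structure1} to first establish semisimplicity and then upgrading the Wedderburn decomposition to a $*$-compatible one. Since $(\Aa,*)$ is polarizable, by definition $\Gamma(\Aa,*)$ is compact and generates $\Aa$, so Proposition~\ref{prop-structure1} applies with $G=\Gamma(\Aa,*)$. In particular part~(1) furnishes an inner product $\ipr{\cdot,\cdot}$ on $\Aa$ satisfying the adjunction relation \eqref{eq-iprb}, namely $\ipr{\eta\alpha,\beta}=\ipr{\alpha,\eta^*\beta}$. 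This relation is the engine of the whole argument: it says the involution $*$ \emph{is} the adjoint operation for this inner product under left multiplication.

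First I would prove that $\Aa$ is semisimple. The standard way is to show the Jacobson radical $J(\Aa)$ vanishes. I would argue that $J(\Aa)$ is a $*$-stable two-sided ideal (the radical is characteristic, hence preserved by the anti-automorphism $*$), and then use the inner product to produce a complement: for any $*$-stable left ideal $I$, the orthogonal complement $I^\perp$ with respect to $\ipr{\cdot,\cdot}$ is again a left ideal, because \eqref{eq-iprb} gives, for $\beta\in I^\perp$ and $\eta\in\Aa$, that $\ipr{\alpha,\eta\beta}=\ipr{\eta^*\alpha,\beta}=0$ for all $\alpha\in I$ (using that $I$ is a left ideal so $\eta^*\alpha\in I$). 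Thus $\Aa=I\oplus I^\perp$ as left modules, so every submodule of the left regular representation is a direct summand; this is exactly Maschke-type complete reducibility, forcing $J(\Aa)=0$ and $\Aa$ semisimple. By the Wedderburn–Artin theorem, $\Aa\cong\bigoplus_{j=1}^N M_{n_j}(\D_j)$ as an algebra, where each $\D_j$ is a finite-dimensional division algebra over $\rl$, hence (Frobenius) one of $\rl,\cx,\hx$.

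Next I must make this decomposition respect the involution, which is the main obstacle. The involution $*$ permutes the simple two-sided ideals (the Wedderburn blocks) among themselves, since it is an algebra anti-automorphism and these blocks are characterized algebraically (as the minimal two-sided ideals). There are two cases for each block: either $*$ maps a block $M_{n_j}(\D_j)$ to itself, or it swaps two distinct blocks. In the swapped case, a pair of blocks interchanged by $*$ is $*$-isomorphic to $B\oplus B^{\mathrm{op}}$ with the exchange involution; but such a $*$-algebra has a non-compact or degenerate unitary group (e.g. $(x,y)\mapsto(y^*,x^*)$ forces unitaries $(x,(x^*)^{-1})$ with $x$ arbitrary invertible), contradicting compactness of $\Gamma(\Aa,*)$. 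Hence every block is $*$-stable, and $*$ restricts to an involution on each $M_{n_j}(\D_j)$. The restricted involution on each block must itself be polarizable (its unitary group is a closed subgroup of the compact $\Gamma(\Aa,*)$, and the block's unitaries generate the block since $G$ generates $\Aa$), so Proposition~\ref{prop-noncompact} identifies each restricted involution, up to $*$-isomorphism, with the conjugate-transpose involution.

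Finally I would assemble these block $*$-isomorphisms into a single $*$-isomorphism $\phi$ from the $*$-direct sum $\bigoplus_{j=1}^N(M_{n_j}(\D_j),*)$ onto $(\Aa,*)$, noting that compatibility of the direct-sum involution with the block-wise conjugate-transpose involutions is exactly the content of the definition of $*$-direct sum in the excerpt. The hardest and most delicate step is ruling out the block-swapping case and verifying that the restricted involution on each retained block inherits polarizability so that Proposition~\ref{prop-noncompact} can be invoked; the semisimplicity and Wedderburn steps are essentially the averaging trick plus a citation to classical structure theory.
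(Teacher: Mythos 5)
Your proposal is correct and follows essentially the same route as the paper's own proof: the averaged inner product of Proposition~\ref{prop-structure1} makes orthogonal complements of left ideals into left ideals (semisimplicity, then Wedderburn--Artin and Frobenius), the involution is shown to preserve the Wedderburn blocks because a swapped pair would produce unitaries $(x,(x^*)^{-1})$ with $x$ arbitrary invertible and hence a noncompact unitary group, and each block's restricted involution is then identified via Proposition~\ref{prop-noncompact}. The only cosmetic difference is your unnecessary restriction to $*$-stable left ideals in the complete-reducibility step (your computation never uses $*$-stability, and the paper works with arbitrary left ideals), and your slightly more abstract phrasing of the noncompactness argument, where the paper exhibits the explicit sequence $u_n=(nI_1,\tfrac1n I_2,I_3,\dots,I_N)$.
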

	\begin{proof}Let $\mathfrak{j}$ be a left ideal in the algebra $\Aa$, and 
		let $\mathfrak{k}$ be the $\rl$-linear subspace of $\Aa$ which is its orthogonal complement with respect to the inner product of Proposition~\ref{prop-structure1}, i.e.
		\[ \mathfrak{k}=\{\alpha\in \Aa: \ipr{\alpha, \beta}=0, \text{ for each } \beta \in \mathfrak{j}\}.\]
		Now, $\mathfrak{k}$ is also a left ideal in the algebra $\Aa$, since by \eqref{eq-iprb}, we have for $\alpha\in \mathfrak{k}$ and $\eta\in\Aa$, 
		\[	\ipr{\eta \alpha ,  \beta} = \ipr{\alpha , \eta^* \beta}=0,\]
		for each $\beta\in \mathfrak{j}$ since $\mathfrak{j}$ is a left ideal. It follows that $\Aa$ is semisimple as a left module over itself, i.e., it is a semisimple $\rl$-algebra. By a famous theorem of Wedderburn (e.g. \cite{lam}), the algebra $\Aa$ is isomorphic to a direct sum of matrix algebras over $\rl$-division algebras. This shows the existence of an isomorphism of $\rl$-algebras:
		\[ 		\phi:  \bigoplus_{j=1}^N M_{n_j}(\D_j)\to \Aa,\]
where for each $j$, $\D_j$ is one of the three real division algebras.

Let $\sharp$ denote the involution on the algebra $\mathfrak{B}=\bigoplus_{j=1}^N M_{n_j}(\D_j)$ obtained by pulling back the involution $*$ of $\Aa$ via $\phi$, so that 
$\phi$ is a $*$-isomorphism of $\Bb$ with $\Aa$: $\phi(x^\sharp)= \phi(x)^*$. We will now show that 
the involution $\sharp$ preserves the direct sum structure of $\Bb$, i.e.,
there exists for each $j$, an involution $\sharp_j$ of $M_{n_j}(\D_j)$ such that 
\[(x_1,\dots, x_N)^\sharp= (x_1^{\sharp_1}, \dots, x_n^{\sharp_n}), \]
i.e. $\Bb$ is the $*$-direct sum of the algebras $(M_{n_j}(\D_j), \sharp_j)$. 

For each $j$,  let $\mathfrak{a}_j$ 
be the image of  the algebra $M_{n_j}(\D_j)$  in the direct sum $\Bb$ under the canonical inclusion, so that 
 $\mathfrak{a}_j$ is a minimal two-sided ideal in $\Bb$.  If $\sharp$ does not preserve the direct sum structure of $\Bb$,  there is a $j$ such that
 $\sharp$ maps $\mathfrak{a}_j$ to $\mathfrak{a}_k$ for some $k\not=j$ (this follows from the minimality of the two-sided ideals $\mathfrak{a}_\ell$.) 
 Without loss of generality assume that $j=1, k=2$ (and therefore $n_1=n_2$, and $\D_1=\D_2$). For each positive integer $n$, consider the element $u_n\in \Bb$ given by
 \[ u_n=\left(nI_1, \frac{1}{n}I_2, I_3,\dots, I_N\right),\]
 where $ I_j$ is the identity of  $M_{n_j}(\D_j)$ . Then it follows that
 \[ u_n^\sharp=  \left(\frac{1}{n}I_1, nI_2, I_3,\dots, I_N\right) \]
 so that $u_nu_n^\sharp$ is the identity element of $\Bb$, and consequently $u_n\in \Gamma(\Bb,\sharp)$. It follows that $\Gamma(\Bb, \sharp)$ is not compact, but since $\phi$ is a $*$-isomorphism,
 we have that $\Gamma(\Aa,*)$ is also noncompact, which contradicts the assumption that  $(\Aa, *)$ is polarizable. 
 
Therefore we have $(\Bb,\sharp)=\bigoplus_{j=1}^N(M_{n_j}(\D_j), \sharp_j)$, a $*$-direct sum, where $\sharp_j$ denotes the restriction of $\sharp$ to $\mathfrak{a}_j$, which can be identified with $M_{n_j}(\D_j)$. Since $(\Bb, \sharp)$ is polarizable, it easily follows that each summand $(M_{n_j}(\D_j), \sharp_j)$ is also
polarizable. By Proposition~\ref{prop-noncompact}, it follows that each $\sharp_j$ is the conjugate transpose involution. 
		
\end{proof}

\subsection{Polarizability of matrix algebras} In this subsection, we will prove that 
a finite $*$-direct sum of matrix algebras over division algebras is polarizable. We note some properties of the vector-valued integral \eqref{eq-vectint} defined
by the condition \eqref{eq-intdefn}:
\begin{enumerate}[wide]
	\item If $f:G\to V$ is a continuous function on the group $G$  with values in a finite dimensional real vector space $V$,
	$W$ is another finite dimensional real vector space and $T:V\to W$ is an $\rl$-linear map, then we have:
	\begin{equation}
		\label{eq-linearity}T\left( \int_G fdg\right)= \int_G \left(T\circ f\right) \,dg 
	\end{equation}
	where now the integral on the right is the vector-valued integral of a $W$-valued function  on $G$. The relation \eqref{eq-linearity} is easily verified using the defining condition \eqref{eq-intdefn}.
	
	\item Again, let $f:G\to V$ be a continuous function on the group $G$  with values in a finite dimensional real vector space $V$. Then 
	we have 
	\begin{equation}\label{eq-span}
	\int_G fdg \in \Span_\rl \left(f(G)\right),
\end{equation}
where $f(G)=\{f(g): g\in G\}\subset V$ is the image of the map $f$. To see \eqref{eq-span}, let $W= \Span_\rl \left(f(G)\right) $,  and let $\pi:V\to V/W$ be the quotient map.  Then by \eqref{eq-linearity}, 
\[\pi\left(\int_G f dg\right)= \int_G \pi(f(g))dg=0,\]
which shows that $\int_G fdg\in W$. With a little more work, one can show that $\int_G fdg$ belongs to the convex hull of $f(G)$, but we do not need 
this. 
\end{enumerate}

 We also recall a few definitions. Let $\sigma\in S_n$ be a permutation, i.e., a bijection of $\{1,\dots, n\}$ with itself. We can associate with $\sigma$ an $n\times n$ \emph{permutation matrix} $P_\sigma$, where
the $j$-th column of $P_\sigma$ is $e_{\sigma(j)}$ for $1\leq j\leq n$, with $e_i$ denoting the $n\times 1$ column matrix with an $1$ in the $i$-th place and 0's everywhere else. It is clear that 
$P_\sigma e_j=e_{\sigma(j)}$. Given a group $H$ of permutation matrices, we say that it is \emph{transitive} if the corresponding permutations act transitively on the set $\{1,\dots, n\}$.

\begin{prop}	\label{prop-polargroupsMnA}
	Let $(\Aa,\sharp)$ be a $*$-algebra, and let $G\subset \Gamma(\Aa,\sharp)$ be a polarizing group. Let $n\geq 2$ and let 
	$\Delta_G\subset M_n(\Aa)$ be the collection of $n\times n$ diagonal matrices with the diagonal entries taken from $G$:
	\[ \Delta_G=\{\diag(g_1,\dots, g_n)\in M_n(\Aa): g_j\in G\},\]
	and let $H$ be a transitive group of permutation matrices in $M_n(\Aa)$. Then
	the set 
	\[ \Delta_GH= \{DP: D\in \Delta_G, P\in H\}\]
	is a compact subgroup of $(M_n(\Aa), *)$ which is polarizing, where $*$ is the involution induced on $M_n(\Aa)$ by the involution $\sharp$ of $\Aa$ 
	as in \eqref{eq-induced}.
\end{prop}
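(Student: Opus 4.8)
The plan is to verify in turn the three requirements on $\Delta_G H$: that it is a compact subgroup of the unitary group $\Gamma(M_n(\Aa),*)$, that it spans (hence generates) $M_n(\Aa)$, and that its first moment $\int_{\Delta_G H} g\,dg$ vanishes. The engine behind both the generation and the vanishing moment is the following consequence of the hypothesis that $G$ is polarizing: the subspace $A_0 := \Span_\rl\{g-1_\Aa : g\in G\}$ equals all of $\Aa$; equivalently, $A_0$ consists of the combinations $\sum_k c_k g^{(k)}$, $g^{(k)}\in G$, with $\sum_k c_k = 0$. Indeed, since $\Span_\rl G = \Aa$ and $g=(g-1_\Aa)+1_\Aa$, we have $\Aa = A_0 + \rl\,1_\Aa$, so $A_0$ is either $\Aa$ or a hyperplane; in the latter case there is a functional $\epsilon$ with $\ker\epsilon = A_0$ and $\epsilon(1_\Aa)=1$, forcing $\epsilon\equiv 1$ on $G$ and hence $0 = \epsilon\bigl(\int_G g\,dg\bigr) = \int_G \epsilon(g)\,dg = 1$, a contradiction. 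So $A_0=\Aa$.

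First I would check the group and topological structure. Writing $*$ for the induced involution \eqref{eq-induced}, a diagonal matrix $D=\diag(g_1,\dots,g_n)$ with $g_j\in G$ satisfies $D^*=\diag(g_1^\sharp,\dots,g_n^\sharp)$, whence $DD^*=I$ and $\Delta_G\subseteq\Gamma(M_n(\Aa),*)$; likewise each permutation matrix $P$ has real $0/1$ entries, so $P^*=P^T=P^{-1}$ and $H\subseteq\Gamma(M_n(\Aa),*)$. Conjugation $P\diag(g_1,\dots,g_n)P^{-1}$ merely permutes the diagonal entries, so $H$ normalizes $\Delta_G$; therefore $\Delta_G H$ is a subgroup, and in fact $\Delta_G H = H\Delta_G$. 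Compactness is immediate, since $\Delta_G$ is a continuous image of the compact product $G^n$ and $H$ is finite, so $\Delta_G H$ is a finite union of compact translates of $\Delta_G$.

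Next, for generation I would show $\Span_\rl(\Delta_G H)=M_n(\Aa)$. Fixing a slot $i$ and $\alpha\in\Aa$, write $\alpha=\sum_k c_k g^{(k)}$ with $\sum_k c_k=0$, possible since $A_0=\Aa$; then $\sum_k c_k\diag(\dots,g^{(k)},\dots)$, with $g^{(k)}$ in slot $i$ and $1_\Aa$ elsewhere, equals the matrix unit $E_{ii}\alpha$ (the matrix with $\alpha$ in position $(i,i)$ and zeros elsewhere), because the off-slot entries total $(\sum_k c_k)1_\Aa=0$. Thus every $E_{ii}\alpha$ lies in $\Span_\rl\Delta_G$. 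Using transitivity of $H$, for any $(i,j)$ I pick $P_\sigma\in H$ with $\sigma(j)=i$; then $P_\sigma E_{jj}\alpha = E_{ij}\alpha$, and since $\Delta_G H=H\Delta_G$ this lies in $\Span_\rl(\Delta_G H)$. As the $E_{ij}\alpha$ span $M_n(\Aa)$, the generation condition holds.

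The main obstacle is the vanishing of the first moment $\mathcal I := \int_{\Delta_G H} M\,dM$, and the point I would stress is that this can be obtained without computing the Haar measure of the semidirect product $\Delta_G\rtimes H$. By left-invariance of Haar measure together with the linearity \eqref{eq-linearity} of the vector-valued integral, for every $k_0\in\Delta_G H$ one has $k_0\mathcal I = \int_{\Delta_G H} k_0 M\,dM = \int_{\Delta_G H} M\,dM = \mathcal I$. It therefore suffices to produce a single $\rl$-linear relation among elements of $\Delta_G H$ whose coefficients do not sum to zero. Using $A_0=\Aa$ once more, write $1_\Aa=\sum_k c_k g^{(k)}$ with $\sum_k c_k=0$; then the scalar matrices $g^{(k)}I$ and $I$ all lie in $\Delta_G$ and satisfy $\sum_k c_k\,(g^{(k)}I) - I = 0$, a relation whose coefficients sum to $\sum_k c_k - 1 = -1\neq 0$. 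Multiplying this relation on the right by $\mathcal I$ and using $k\mathcal I=\mathcal I$ for each group element $k$ gives $0 = \bigl(\sum_k c_k - 1\bigr)\mathcal I = -\mathcal I$, so $\mathcal I=0$. (Alternatively one could identify the normalized Haar measure on $\Delta_G H$ as the product of the Haar measure of $G^n$ with normalized counting on $H$ and reduce entrywise to $\int_G g\,dg=0$, but the invariance argument is cleaner and avoids that bookkeeping.) This shows $\Delta_G H$ is polarizing, completing the proof.
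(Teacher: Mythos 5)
Your proof is correct, and for the two substantive steps it takes a genuinely different route from the paper's. The paper proves generation by integrating: it shows $E_{j,j}=\int_G D_j(g)\,dg$ (where $D_j(g)$ has $1_\Aa$ in slot $j$ and $g$ elsewhere), invokes \eqref{eq-span}, and then builds general matrices as products $(\alpha I)\cdot E_{i,j}$; and it proves $\int_{\Delta_GH}g\,dg=0$ by exhibiting $\Delta_GH$ as an internal semidirect product $\Delta_G\rtimes H$, citing the fact that Haar measure on a semidirect product is the product measure, and factoring the integral via Fubini. You instead extract a single algebraic consequence of \eqref{eq-gdg} --- that every element of $\Aa$ is a zero-sum real combination of elements of $G$, proved by the clean duality argument with the functional $\epsilon$ --- and from it get generation by pure linear algebra (the zero-sum combination kills the off-slot entries, placing $\alpha E_{ii}$ directly in $\Span_\rl\Delta_G$) and the vanishing first moment from left-invariance alone: any relation $\sum_k c_k k_k=0$ among group elements with $\sum_k c_k\neq 0$ forces $\mu_1=0$, since each $k\mathcal I=\mathcal I$. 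What your route buys is economy: it needs neither the product-measure fact nor Fubini, and the invariance trick is a general principle in the spirit of Proposition~\ref{prop-firstmoment}(b). What the paper's route buys is reuse: the semidirect-product and Fubini machinery set up here is needed again in Proposition~\ref{prop-2ndmoment} to compute the second moment of $M_n(\Aa)$, where your softer argument would not suffice. Both establish the same skeleton (compactness, unitarity, group structure) identically.
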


\begin{proof} It is clear that $\Delta_G\cong G^n$ is a  compact group as a Cartesian power  of the compact group $G$ and $\Delta_GH$ is compact as a union of finitely many homeomorphic 
	copies of $\Delta_G$. If $D\in \Delta_G$, we can write $D=\diag(g_1,\dots, g_n)$ for $g_j\in G, 1\leq j \leq n$, so that we have
	\[ D^*D= \diag(g_1^\sharp,\dots, g_n^\sharp)\cdot \diag(g_1,\dots, g_n)=\diag(1_\Aa,\dots, 1_\Aa),\]
	since $G\subset \Gamma(\Aa,\sharp)$ by hypothesis. Also, we have for a permutation matrix $P\in M_n(\Aa)$:
	\[ P^*P=(P^\sharp)^TP=P^TP=I.\]
	It follows that $\Delta_G H$ is a compact subset of the unitary group $\Gamma(M_n(\Aa),*)$.

	A direct computation shows that for each $\sigma\in S_n$, and $D\in  \Delta_G$ we have
	\begin{equation}
		\label{eq-semidirect}
		(P_{\sigma})^{-1} D P_\sigma =P_{\sigma^{-1}}D P_\sigma=D'\in \Delta_G
	\end{equation}
	where $D'$ is a diagonal matrix whose  nonzero entries are a permutation of the diagonal entries of $D$. More precisely, if $D=\diag(g_1,\dots, g_n)$ where $g_j\in G$,
	then $D'=\diag(g_{\sigma(1)},\dots, g_{\sigma(n)})$.

	The relation \eqref{eq-semidirect} shows not only that $\Delta_G H$ is a group, but also that $\Delta_G$ is normal subgroup of the group $\Delta_G H$. The intersection
	$\Delta_G\cap H$ consists of diagonal matrices which are permutation matrices and therefore is $\{I\}$.  It folows that $\Delta_G H$ is the (internal) semidirect product of the two subgroups 
	$\Delta_G$ and $H$. Since on a semidirect product, the Haar measure is equal to the product measure  (e.g. \cite[pp. 96ff.]{nachbin}), we have, using Fubini's theorem:
	\[ \int_{\Delta_GH}gdg= \int_{H} \int_{\Delta_G} D P dD dP =\left(\int_{\Delta_G} D dD \right)\cdot \left(\int_H PdP\right)= 0\cdot\left(\int_H PdP\right)=0, \]
	using the fact that $G$ is polarizable so \eqref{eq-gdg} holds.

	We now show that $\Delta_G H$ generates $M_n(\Aa)$. Let $E_{i,j}$ be the $n\times n$ matrix with $1_\Aa$ in the $i$-th row and $j$-th column and zeros everywhere else. First we show that $E_{i,j}$ is in the 
	subalgebra of $M_n(\Aa)$ generated by the group $\Delta_G H$. 
	For $1\leq j \leq n$ and $g\in G$, denote by $D_j(g)\in \Delta_G$ the diagonal matrix
	with a $1_\Aa$ in the $j$-th place and $g$ in each other place.  Then we clearly have, using \eqref{eq-gdg} that 
	$ E_{j,j}= \int_G D_j(g)dg.$
	%
	It follows by \eqref{eq-span}  that $E_{j,j}$ is in the algebra generated by $\Delta_G$.  Now observe that
	\[ E_{i,j}e_k = \begin{cases} e_i &\text{if  }     k = j \\ 0 & \text{otherwise} \end{cases}. \]
	Since $H$ is transitive, by definition there exists a $P \in H$ such that $Pe_j = e_i$, so that
	\[ P E_{j,j} e_k = \begin{cases} Pe_j= e_i &\text{if   } k = j \\ P(0)=0 & \text{otherwise} \end{cases}.\]
	Hence $PE_{j,j} = E_{i,j}$, so that $\Delta_G H$ generates all of the $E_{i,j}$. 
	
	
	Now we note that $\alpha I$ is generated by $\Delta_G H$ for each $\alpha \in \Aa$. Since $G$ generates $\Aa$,  there exist $g_1, \dots g_n \in G$ and $\lambda_1, \dots ,\lambda_n \in \rl$ so that $\alpha = \sum \lambda_i g_i$. Hence
	$\alpha I = \sum_{i=1}^n \lambda_i (g_iI) ,$
	where each $g_iI \in \Delta_G$.  Writing a matrix $A=(\alpha_{i,j})\in M_n(\Aa)$ as 
	\[ A = \sum_{i,j=1}^n (\alpha_{i,j}I)\cdot E_{i,,j} \]
	we see that $\Delta_G H $ is a polarizing subgroup of $M_n(\Aa)$.
\end{proof}
From this we deduce the following:
\begin{prop}\label{prop-kappa2}
	A $*$-algebra  which is $*$-isomorphic to a finite $*$-direct sum of matrix algebras over real division algebras, each endowed with the conjugate-transpose involution, is polarizable.
\end{prop}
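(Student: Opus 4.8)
The plan is to build a polarizing subgroup of the $*$-direct sum out of polarizing subgroups of the individual matrix summands, which in turn are produced from polarizing subgroups of the division algebras by Proposition~\ref{prop-polargroupsMnA}. Once a polarizing subgroup is exhibited in a given algebra, its polarizability is immediate from Proposition~\ref{prop-structure1}(3), since a polarizing subgroup is in particular a compact subgroup of the unitary group that generates the algebra.

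First I would treat the three division algebras $\D=\rl,\cx,\hx$ with their standard conjugations. In each case I would exhibit a finite polarizing subgroup containing $-1_\D$: namely $\{\pm 1\}$ for $\rl$, $\{\pm 1,\pm i\}$ for $\cx$, and the eight-element quaternion group $\{\pm 1,\pm i,\pm j,\pm k\}$ for $\hx$. Each such group lies in the unitary group, spans $\D$ over $\rl$, and, because it contains $-1_\D$, has vanishing first moment: by Haar invariance $\int_G g\,dg=\int_G(-1)g\,dg=-\int_G g\,dg$, which forces \eqref{eq-gdg}. Thus each $\D$ is polarizable with an explicit polarizing subgroup $G_\D$.

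Next, for a single summand $M_n(\D)$ with $n\geq 2$ and the conjugate-transpose involution --- which is precisely the involution induced on $M_n(\D)$ by the conjugation of $\D$ in the sense of \eqref{eq-induced} --- I would apply Proposition~\ref{prop-polargroupsMnA} with the polarizing group $G_\D$ and with $H$ the cyclic group of permutation matrices generated by the $n$-cycle $(1\,2\,\cdots\,n)$, which acts transitively on $\{1,\dots,n\}$. This yields a polarizing subgroup $\Delta_{G_\D}H$ of $(M_n(\D),*)$, so each matrix summand is polarizable; the case $n=1$ is already covered by the previous paragraph.

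Finally, to pass to the finite $*$-direct sum $\Aa=\bigoplus_{j=1}^N M_{n_j}(\D_j)$, I would take for each $j$ a polarizing subgroup $G_j$ of the $j$-th summand (from the preceding steps) and form $G=\prod_{j=1}^N G_j$, viewed as a subgroup of $\Gamma(\Aa,*)$ via the componentwise action. This $G$ is compact (a product of compacts) and clearly lands in the unitary group, since $(u_1,\dots,u_N)(u_1,\dots,u_N)^*=(u_1u_1^*,\dots,u_Nu_N^*)=1_\Aa$. The one genuinely non-formal point, and the step I expect to be the main obstacle, is showing that $G$ generates $\Aa$: a product of spanning sets need not span a direct sum through obvious products alone. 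Here I would exploit the vanishing first moment of the factor groups. For a fixed index $j$ and a fixed $g_j\in G_j$, the map $\prod_{k\neq j}G_k\to\Aa$ that inserts the constant $g_j$ into the $j$-th slot is continuous, and integrating it over all other coordinates produces, by the moment condition \eqref{eq-gdg} for each $G_k$ with $k\neq j$ (using that Haar measure on the product group is the product measure), exactly the element with $g_j$ in slot $j$ and zeros elsewhere. By \eqref{eq-span} this element lies in $\Span_\rl G$; letting $g_j$ range over $G_j$ and using $\Span_\rl G_j=M_{n_j}(\D_j)$ then recovers each summand, so $\Span_\rl G=\Aa$. Since $G$ is also closed under multiplication and involution, it generates $\Aa$ as a $*$-algebra, and Proposition~\ref{prop-structure1}(3) yields the polarizability of $\Aa$.
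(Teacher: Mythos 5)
Your proof is correct and follows essentially the same route as the paper: finite polarizing subgroups $\{\pm 1\}$, $\{\pm 1,\pm i\}$, $\{\pm 1,\pm i,\pm j,\pm k\}$ of the division algebras, then Proposition~\ref{prop-polargroupsMnA} for each matrix summand, then passage to the $*$-direct sum and an appeal to Proposition~\ref{prop-structure1}(3). The only difference is that you spell out the direct-sum step, which the paper dismisses as ``easily seen'': your averaging argument, integrating out the other coordinates and invoking the vanishing first moments together with \eqref{eq-span} to land each summand inside $\Span_\rl G$, is a correct filling-in of that implicit detail.
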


\begin{proof}

From part (3) of Proposition~\ref{prop-structure1} it follows that a $*$-algebra is polarizable if and only if it has a compact subgroup of the unitary group whose $\rl$-linear span is the whole algebra. In particular, any $*$-algebra which admits 
a polarizing subgroup is polarizable.

Using this criterion it is easy to see that $\rl$, $\cx$ and $\hx$ are each polarizable if
endowed with the standard conjugation operation. 
Indeed, each of the finite groups $\{\pm 1\}, \{\pm 1, \pm i\} $ and  $\{\pm 1, \pm i, \pm j,\pm k\}$ consist of unitary elements in $\rl,\cx$ and $\hx$ respectively and span the respective division algebra.

Therefore, by Proposition~\ref{prop-polargroupsMnA}, the  $*$-algebra  $(M_n(\D), *)$, where the involution $*$ is the standard conjugate-transpose operation, is polarizable, where 
$\D$ is one of $\rl,\cx,\hx$. Notice also that the $*$-direct sum of a finite number of polarizable $*$-algebras is easily seen to be polarizable. Therefore the $*$-direct sum $\bigoplus_{j=1}^N (M_{n_j}(\D_j) *)$ is polarizable. Finally polarizability is clearly preserved by $*$-isomorphisms, so the result follows. 

\end{proof}
\section{Proof of Theorems~\ref{thm-main} and \ref{thm-kappa}}
\subsection{Integrals on   compact multiplicative groups} \label{sec-moments}
Let $\Aa$ be a $*$-algebra and let $G$ be a compact subgroup of $\Aa^\times$, the group of 
units (i.e. invertible elements) of  $\Aa$. For an integer
$k\geq 0$, we define the $k$-th \emph{moment} of $G$ to be the element of $\Aa$ given by
\begin{equation}\label{eq-momentdef}
	\mu_k(G)=\int_G g^k dg,
\end{equation}
where the integral is that of the  $\Aa$-valued function  $g\mapsto g^k$
  taken with respect to the normalized 
Haar measure of the compact group $G$, defined as in \eqref{eq-intdefn}. 	The relation \eqref{eq-linearity} will be used repeatedly without further comments in the computations below.  We now collect some basic information about moments.
\begin{prop}\label{prop-momentproperties}
	For each compact subgroup $G\subset \Aa^\times$ and each integer $k\ge0$, the element $\mu_k(G)$ belongs to the subalgebra $\Bb$ of $\Aa$ generated by $G$, and in fact
		 lies in the center of $\Bb$, i.e., for each $h\in \Bb$, we have $h\mu_k(G)=\mu_k(G)h$.
\end{prop}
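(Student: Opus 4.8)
The plan is to treat the two assertions — that $\mu_k(G)$ lies in $\Bb$, and that it is central in $\Bb$ — separately, in both cases reducing everything to the linearity property \eqref{eq-linearity} of the vector-valued integral together with the invariance of Haar measure. The first observation I would make is that, since $G$ is a subgroup of $\Aa^\times$ and hence closed under multiplication and contains $1_\Aa$, the subalgebra $\Bb$ generated by $G$ is simply $\Span_\rl G$.

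For the membership claim, I would note that $g^k \in G \subseteq \Bb$ for every $g\in G$, so the integrand $f(g)=g^k$ takes values in the subspace $\Bb$. Applying \eqref{eq-span} to $f$ then gives $\mu_k(G)=\int_G f\,dg \in \Span_\rl\!\left(f(G)\right) \subseteq \Bb$, since $f(G)\subseteq \Bb$ and $\Bb$ is an $\rl$-subspace.

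For centrality, because $\Bb=\Span_\rl G$ and multiplication is $\rl$-bilinear, it suffices to prove $h\mu_k(G)=\mu_k(G)h$, equivalently $h\mu_k(G)h^{-1}=\mu_k(G)$, for every generator $h\in G$. Here I would use that left multiplication by $h$ and right multiplication by $h^{-1}$ are $\rl$-linear endomorphisms of $\Aa$; combining them and invoking \eqref{eq-linearity} twice yields
\[
h\mu_k(G)h^{-1}=\int_G h\,g^k\,h^{-1}\,dg=\int_G \left(hgh^{-1}\right)^k dg,
\]
the last step by the telescoping identity $h g^k h^{-1}=(hgh^{-1})^k$.

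The crux is then recognizing the right-hand integrand as $f\circ\psi_h$, where $\psi_h(g)=hgh^{-1}$ is the inner automorphism of $G$ determined by $h\in G$. Since conjugation by $h$ is the composite of left translation by $h$ and right translation by $h^{-1}$, and the normalized Haar measure on the compact group $G$ is bi-invariant, $\psi_h$ preserves Haar measure. I expect this invariance to be the only step requiring genuine care: I would justify it by testing against an arbitrary linear functional $\phi\colon\Aa\to\rl$ through the defining relation \eqref{eq-intdefn}, reducing the vector-valued change of variables to the classical invariance of Haar measure for the real-valued function $\phi\circ f$. It follows that $\int_G (hgh^{-1})^k\,dg=\int_G g^k\,dg=\mu_k(G)$, so $h\mu_k(G)h^{-1}=\mu_k(G)$ for all $h\in G$, and extending linearly over $\Span_\rl G=\Bb$ completes the proof.
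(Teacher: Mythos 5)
Your proposal is correct and follows essentially the same route as the paper: identify $\Bb$ with $\Span_\rl G$, use \eqref{eq-span} for membership, and reduce centrality to conjugation by elements of $G$ via the identity $hg^kh^{-1}=(hgh^{-1})^k$ together with the bi-invariance of Haar measure. Your extra care in justifying the measure-invariance of the inner automorphism by testing against linear functionals is a welcome elaboration of a step the paper leaves implicit, but it does not change the argument.
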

\begin{proof}

	 Since $G$ being a group is closed under multiplication in $\Aa$, one sees easily that 
	$ \Bb= \Span_\rl G,$
	i.e. the algebra $\Bb$ generated by $G$ coincides with the linear span of the subset of $G$ of the $\rl$-vector space $\Aa$.  However the integrand of the vector-valued integral \eqref{eq-momentdef} takes values in the linear subspace $\Bb$ of $\Aa$, and therefore by \eqref{eq-span} the value of the integral lies in $\Bb$.  
	Now let $h\in G$, then
	 \[ h\mu_k(G)h^{-1}= \int_G hg^kh^{-1}dg = \int_G (hgh^{-1})^kdg=\int_G g^kdg=\mu_k(G), \]
	 using the invariance of the Haar measure. Therefore, we have for each $h\in G$ that 
	 $h\mu_k(G)=\mu_k(G)h$. Since $\Bb$ is the collection of real linear combinations of the elements of $G$, it follows that the same relation holds for $h\in \Bb$ as well.
\end{proof}
We now compute the first and second moments of a group  under appropriate hypotheses.

\begin{prop}\label{prop-firstmoment}
	Let $G\subset \Aa^\times$ be a compact subgroup and
	 suppose  that one of the following hold:
	\begin{enumerate}[label=(\alph*)]
		\item $-1_\Aa\in G$, \label{condition a}
		\item $G\not= \{1_\Aa\}$ and the subalgebra $\Bb$  of $\Aa$ generated by $G$ is simple  (i.e. has no nontrivial two sided ideals).\label{condition b}
	\end{enumerate}
Then we have
\[ \mu_1(G)=0.\]
\end{prop}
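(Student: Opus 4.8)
The plan is to handle the two hypotheses separately. Under condition~\ref{condition a} the conclusion follows from a single symmetry argument, identical to the one already used for $\Gamma(\Aa,*)$ in \eqref{eq-gammapol}: since $-1_\Aa\in G$, left multiplication by $-1_\Aa$ is an $\rl$-linear self-map of $\Aa$, so combining the linearity property \eqref{eq-linearity} with the left-invariance of the Haar measure gives
\[
\mu_1(G)=\int_G g\,dg = \int_G (-1_\Aa)g\,dg = (-1_\Aa)\int_G g\,dg = -\mu_1(G),
\]
whence $2\mu_1(G)=0$ and $\mu_1(G)=0$.

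Condition~\ref{condition b} is the substantive case, and I would attack it through two independent observations about $z:=\mu_1(G)$ inside the subalgebra $\Bb=\Span_\rl G$ generated by $G$. First, $z$ is left-fixed by the whole group: for every $h\in G$, applying \eqref{eq-linearity} and then the left-invariance of Haar measure as above yields
\[
hz=\int_G hg\,dg=\int_G g\,dg=z .
\]
Second, by Proposition~\ref{prop-momentproperties} the element $z$ lies in the center $Z(\Bb)$; and because $\Bb$ is simple and finite-dimensional over $\rl$, Wedderburn's theorem writes $\Bb\cong M_k(\D)$, whose center is isomorphic to the field $Z(\D)$, so $Z(\Bb)$ is a field. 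Consequently $z$ is either $0$ or invertible in $\Bb$.

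I would then conclude by ruling out invertibility: if $z\ne 0$ it has an inverse in $Z(\Bb)\subseteq\Bb$, and right-multiplying the relation $hz=z$ by $z^{-1}$ forces $h=1_\Aa$ for every $h\in G$, i.e.\ $G=\{1_\Aa\}$, contradicting the standing hypothesis $G\ne\{1_\Aa\}$. Hence $z=\mu_1(G)=0$. The crux of the whole argument is case~\ref{condition b}, and specifically the interplay of its two ingredients: the fixed-point relation $hz=z$ is harmless on its own (a zero divisor $z$ could satisfy it without $G$ being trivial), but simplicity promotes a nonzero central $z$ to an \emph{invertible} element, which is exactly what converts $hz=z$ into $h=1_\Aa$. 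Case~\ref{condition a} is routine by comparison.
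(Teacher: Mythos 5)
Your proof is correct, and case (b) follows a genuinely different (though closely related) route from the paper's. Both arguments rest on the same two pillars — Haar invariance, which gives the relation $h\mu_1(G)=\mu_1(G)$ for $h\in G$ (equivalently $(g_1-g_2)\mu_1(G)=0$), and the centrality of $\mu_1(G)$ from Proposition~\ref{prop-momentproperties} — but they exploit simplicity differently. The paper forms the left annihilator $\mathfrak{J}=\{h\in\Bb: h\mu_1(G)=0\}$, uses centrality to see it is a two-sided ideal, exhibits a nonzero element $g_1-g_2$ of it when $\Bb\neq\rl$, and concludes $1_\Aa\in\mathfrak{J}$ by simplicity; this forces the paper to dispose of the degenerate case $\Bb=\rl$ (where $G=\{\pm1_\Aa\}$) by a separate direct computation. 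You instead observe that the center of a simple (finite-dimensional) algebra is a field, so the central element $\mu_1(G)$ is either zero or invertible, and invertibility would upgrade $h\mu_1(G)=\mu_1(G)$ to $h=1_\Aa$ for all $h\in G$, contradicting $G\neq\{1_\Aa\}$. Your dichotomy handles all cases uniformly, with no case split on $\dim_\rl\Bb$; the paper's annihilator-ideal argument is marginally more elementary in that it needs only the definition of simplicity rather than the structure of the center (though, as you note, that structure is itself an easy consequence of simplicity, with or without Wedderburn). Case (a) is identical in both treatments, mirroring \eqref{eq-gammapol}.
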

\begin{proof}
	If condition \ref{condition a} holds, then by the invariance of Haar measure
	\[ \mu_1(G)= \int_G(-1_\Aa)g dg = - \mu_1(G).\]
Now,  assume \ref{condition b}.  If $\Bb= \rl$, then by the nontriviality of $G$, we must have $G=\{\pm 1_\Aa\}$, so that $\mu_1(G ) = \frac{1}{2}(1-1)=0$.  Assuming therefore that
$\Bb\not =\rl$,  since by Proposition~\ref{prop-momentproperties}, the element $\mu_1(G)$ is in the center of 
$\Bb$ it follows that the set $\mathfrak{J}=\{h:h\mu_1(G)=0\}$ is a two sided ideal in $\Bb$. Since $\Bb\not=\rl$, there are two $\rl$-linearly 
independent elements $g_1,g_2\in G$. If we let $h=g_1-g_2$, we have $0 \not=h \in \mathfrak{J}$ since
\[ h\mu_1(G)= \int_G g_1gdg- \int_G g_2gdg= \int_G gdg-\int_G gdg=0.\]
By simplicity of $\Bb$, $\mathfrak{J}$ is all of $\Bb$.  It follows that $\mu_1(G)= 1_\Aa\cdot \mu_1(G)=1\mu_1(G)=0$.
\end{proof}
The following proposition summarizes the properties of the second moment that we will need in our  application:

\begin{prop} \label{prop-secondmoment}
	Let $(\Aa,*)$ be a $*$-algebra, and $G\subset \Gamma(\Aa,*)$ be a compact subgroup of unitary elements of $\Aa$ that generates $\Aa$. Then
for each $\alpha\in \Aa$, we have
		\begin{equation}
			\label{eq-alphastargamma}
	 \int_G g\alpha gdg=\alpha^* \mu_2(\Gamma(\Aa,*)).
	\end{equation}\end{prop}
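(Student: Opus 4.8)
The plan is to prove the identity in two stages: first establish the formula with $\mu_2(G)$ in place of $\mu_2(\Gamma(\Aa,*))$, and then show that these two moments coincide. Throughout write $\Gamma = \Gamma(\Aa,*)$, which is compact by part (3) of Proposition~\ref{prop-structure1} since $G$ is a compact subgroup of $\Gamma$ generating $\Aa$; in particular both $G$ and $\Gamma$ carry bi-invariant normalized Haar measures and $G$ is a closed subgroup of $\Gamma$.

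\emph{Step 1.} I would first establish that
\[ \int_G g\alpha g\, dg = \alpha^* \mu_2(G) \quad\text{for every } \alpha\in\Aa. \]
Both sides are $\rl$-linear in $\alpha$ (the involution is $\rl$-linear and right multiplication by $\mu_2(G)$ is linear), and $\Span_\rl G = \Aa$ because $G$ generates $\Aa$; hence it suffices to verify the identity for $\alpha = h\in G$. For such $h$ I would substitute $u = hg$: since $h\in G$ and Haar measure on $G$ is left invariant, $u$ ranges over $G$ with $du = dg$, and $g = h^{-1}u$ gives $ghg = (h^{-1}u)h(h^{-1}u) = h^{-1}u^2$. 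Therefore $\int_G ghg\, dg = h^{-1}\mu_2(G)$, and since $h$ is unitary we have $h^{-1} = h^*$, proving the claim for $\alpha = h$ and hence for all $\alpha$.

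\emph{Step 2.} It remains to show $\mu_2(G) = \mu_2(\Gamma)$. The key observation is that for every $x\in\Gamma$,
\[ \int_G (xg)^2\, dg = \int_G x g x g\, dg = x\left(\int_G g x g\, dg\right) = x\,(x^*\mu_2(G)) = (xx^*)\mu_2(G) = \mu_2(G), \]
where the third equality is Step 1 applied with $\alpha = x$ and the last uses $xx^* = 1_\Aa$. I would then integrate the $\Aa$-valued function $(x,g)\mapsto (xg)^2$ over $\Gamma\times G$ against the product of the normalized Haar measures and apply Fubini. Integrating in $g$ first gives, by the displayed computation, $\int_\Gamma\mu_2(G)\, dx = \mu_2(G)$. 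Integrating in $x$ first, for each fixed $g\in G\subseteq\Gamma$ the substitution $y = xg$ (right invariance of Haar measure on $\Gamma$) yields $\int_\Gamma (xg)^2\, dx = \int_\Gamma y^2\, dy = \mu_2(\Gamma)$, so the double integral equals $\int_G\mu_2(\Gamma)\, dg = \mu_2(\Gamma)$. Comparing the two evaluations gives $\mu_2(G) = \mu_2(\Gamma)$, which combined with Step 1 finishes the proof.

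The main obstacle is conceptual rather than computational: the natural substitution in Step 1 is confined to translations within $G$, so it can only produce the second moment of $G$ itself, not of the full unitary group $\Gamma$. The content of the proposition is precisely that this moment is independent of the generating group, and the device I expect to overcome the obstacle is the symmetric double integral of Step 2, in which the unitarity $xx^*=1_\Aa$ of elements of $\Gamma$ collapses the inner integral to the constant $\mu_2(G)$. Compactness of $\Gamma$, guaranteed by Proposition~\ref{prop-structure1}, is exactly what legitimizes the $x$-integration over $\Gamma$.
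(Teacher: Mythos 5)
Your proposal is correct and follows essentially the same route as the paper: Step 1 is the paper's computation of $\int_G g h g\,dg = h^*\mu_2(G)$ for $h\in G$ extended by $\rl$-linearity over $\Span_\rl G=\Aa$, and Step 2 is precisely the paper's Fubini argument on $\Gamma(\Aa,*)\times G$, using the Step 1 identity with $\alpha=\gamma\in\Gamma(\Aa,*)$ together with right Haar invariance on $\Gamma(\Aa,*)$ to conclude $\mu_2(G)=\mu_2(\Gamma(\Aa,*))$. No gaps; the appeal to Proposition~\ref{prop-structure1} for compactness of $\Gamma(\Aa,*)$ matches the paper's remark.
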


\textbf{Remark:} Thanks to part (3) of Proposition~\ref{prop-structure1}, we know that  the unitary group $\Gamma(\Aa,*)$ is compact, and therefore $ \mu_2(\Gamma(\Aa,*))$ makes sense.
 Taking $\alpha= 1_\Aa$ in \eqref{eq-alphastargamma}, we have the remarkable fact that in a  polarizable $*$-algebra $(\Aa,*)$:
\begin{equation}\label{eq-mu2inv}
\mu_2(G)= \mu_2(\Gamma(\Aa,*)),
\end{equation}
for each compact subgroup $G$ of $\Gamma(\Aa,*)$ which generates the algebra $\Aa$. For a polarizable algebra $(\Aa, *)$ we will denote, by abuse of notation,
\[ \mu_2(\Aa)= \mu_2(\Gamma(\Aa,*)).\]
We refer to the element $\mu_2(\Aa)\in \Aa$ as the \emph{ second moment} of the polarizable algebra $(\Aa, *)$.
\begin{proof}
For each $h\in G$ we have, by an application of Haar invariance, that 
	\begin{equation*}
		 \int_{G} gh gdg = h^*h\int_{G} gh gdg = h^*\int_{G} (h g)^2 dg = h^*\int_{G} g^2 dg= h^*\mu_2(G),
	\end{equation*}
	using the fact that  $hh^*=1_\Aa$. Since $\Span_\rl{G} = \Aa$, we 
	can write each $\alpha \in \Aa$ as a finite sum  $\alpha = \sum \lambda_j h_j$ 
	for $h_1 ,\dots , h_n \in G$ and $\lambda_1 , \dots , \lambda _n \in \rl$. Hence
	\begin{equation}
		\label{eq-alphastargsquared}
		\int_{G} g \alpha g dg = \sum_{j=1}^n \lambda_j \int_{G} g h_j g dg = \sum_{j=1}^n \lambda_j h_j^* \int_{G} g^2 dg = \alpha^*\mu_2(G).
	\end{equation}
	To complete the proof we need to establish the relation~\eqref{eq-mu2inv}. This is obtained from the following computation:
	\begin{align*}
		\mu_2(G)&= \mu_2(G)\cdot\int_{\Gamma(\Aa,*)}d\gamma 
		=\int_{\Gamma(\Aa,*)}\gamma\gamma^* \mu_2(G)d\gamma \\
		&= \int_{\Gamma(\Aa,*)}\gamma\cdot\left(\int_G g\gamma g dg\right)d\gamma &\text{ using \eqref{eq-alphastargsquared}}\\
		& =\int_{\Gamma(\Aa,*)}\int_G(\gamma g)^2 dg d\gamma= \int_G\int_{\Gamma(\Aa,*)}(\gamma g)^2d\gamma dg\\
		&=\int_G\int_{\Gamma(\Aa,*)}\gamma^2d\gamma dg &\text{ by Haar invariance on $\Gamma(\Aa,*)$}\\
		&= \int_G \mu_2(\Gamma(\Aa,*))dg= \mu_2(\Gamma(\Aa, *)).
	\end{align*}

\end{proof}

 \label{sec-polarization}
\subsection{The key computation} We begin with the following consequence of the computations of Section~\ref{sec-moments}:
	\begin{prop}\label{prop-computation}
	Let $\Aa$ be a $*$-algebra and let $G\subset \Gamma(\Aa,*)$ be a compact subgroup which generates the algebra $\Aa$. Then for each 
	left $\Aa$-module $\X$, and each Hermitian form $Q:\X\times \X\to \Aa$, we have
	\begin{equation}\label{eq-prepol}
 \int_G q(x+gy)gdg= (q(x)+q(y))\cdot \mu_1(G)+ Q(x,y)\cdot (1_\Aa+\mu_2(\Gamma(\Aa,*))),
	\end{equation}
where $x,y\in \X$, and we set 
	\[q(x)= Q(x,x), \quad x\in \X.\]
\end{prop}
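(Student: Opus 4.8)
The plan is to expand $q(x+gy)$ using the defining properties of the Hermitian form $Q$, simplify each resulting term using the unitarity of $g$, and then integrate term by term, invoking the moment computations of the previous subsection. First I would note that since $Q(x,y)^*=Q(y,x)$, additivity holds in both slots, so
\[
q(x+gy)=Q(x+gy,x+gy)=q(x)+Q(x,gy)+Q(gy,x)+Q(gy,gy).
\]
Using left-linearity $Q(\alpha x,y)=\alpha Q(x,y)$ together with the conjugate-symmetry, I would rewrite the three nondiagonal pieces: $Q(gy,x)=gQ(y,x)$; $Q(x,gy)=Q(gy,x)^*=(gQ(y,x))^*=Q(y,x)^*g^*=Q(x,y)g^*$; and $Q(gy,gy)=gQ(y,gy)=g\,(Q(gy,y))^*=g\,(gq(y))^*=g\,q(y)\,g^*$, where I use that $q(y)=Q(y,y)$ is self-adjoint.

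Next I would multiply on the right by $g$ and exploit the fact that $g\in\Gamma(\Aa,*)$ is unitary: since $gg^*=1_\Aa$ in a finite-dimensional associative algebra forces $g^*=g^{-1}$, we also have $g^*g=1_\Aa$. Hence the factors $g^*g$ collapse and one obtains the pointwise identity
\[
q(x+gy)\,g=q(x)\,g+Q(x,y)+g\,Q(y,x)\,g+g\,q(y),
\]
valid for every $g\in G$. The main point of the argument is now to integrate this over $G$ against normalized Haar measure and recognize each of the four terms.

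Integrating term by term: the first and last give $q(x)\int_G g\,dg=q(x)\mu_1(G)$ and $\left(\int_G g\,dg\right)q(y)=\mu_1(G)q(y)$; the second yields $Q(x,y)\int_G dg=Q(x,y)$ by the normalization of the measure. The decisive term is $\int_G g\,Q(y,x)\,g\,dg$, which is exactly the form handled by Proposition~\ref{prop-secondmoment} with $\alpha=Q(y,x)$, giving $Q(y,x)^*\mu_2(\Gamma(\Aa,*))=Q(x,y)\,\mu_2(\Gamma(\Aa,*))$; this step is the heart of the computation and the place where the hypothesis that $G$ generates $\Aa$ is used (through Proposition~\ref{prop-secondmoment}). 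Finally, since $G$ generates $\Aa$, Proposition~\ref{prop-momentproperties} shows $\mu_1(G)$ lies in the center of $\Aa$, so $\mu_1(G)q(y)=q(y)\mu_1(G)$ and the two first-moment contributions combine as $(q(x)+q(y))\mu_1(G)$. Collecting the four pieces gives
\[
\int_G q(x+gy)\,g\,dg=(q(x)+q(y))\,\mu_1(G)+Q(x,y)\bigl(1_\Aa+\mu_2(\Gamma(\Aa,*))\bigr),
\]
which is the claimed identity \eqref{eq-prepol}. The only genuine obstacle is keeping careful track of the order of multiplication throughout (the algebra is noncommutative, so the placement of each $g$ and $g^*$ matters), and correctly reducing the quadratic cross term to Proposition~\ref{prop-secondmoment}; everything else is routine once unitarity and centrality of $\mu_1(G)$ are in hand.
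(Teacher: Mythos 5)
Your proof is correct and follows essentially the same route as the paper's: expand $q(x+gy)$ by sesquilinearity, multiply on the right by $g$, use unitarity to collapse the $g^*g$ factors, and integrate term by term, invoking Proposition~\ref{prop-secondmoment} for the cross term $\int_G gQ(y,x)g\,dg$ and the centrality of $\mu_1(G)$ from Proposition~\ref{prop-momentproperties} to combine the first-moment contributions. The only cosmetic difference is that you simplify $g^*g=1_\Aa$ pointwise before integrating, whereas the paper carries these factors into the integrals and simplifies there.
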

\begin{proof}For $x,y\in \X$ and $g\in G\subset \Aa$ we have 
	\begin{align*}
		q(x+gy)&= Q(x+gy, x+gy)\\
		&= Q(x,x)+Q(x,gy)+ Q(gy,x)+Q(gy,gy)\\
		&=q(x)+ Q(x,y)g^*+ g Q(y,x)+ g q(y)g^*.
	\end{align*}
	Therefore, multiplying on the right by $g$ and integrating on $G$ with respect to Haar measure we have
	\[  \int_G q(x+gy)gdg =A+B+C+D,\]
	where
	\[A = \int_G q(x)gdg = q(x)\mu_1(G),\]
	\[ B= \int_G Q(x,y)g^* g dg = \int_G Q(x,y)dg= Q(x,y)\int_G dg=Q(x,y),\]
	\begin{align*}
		C&= \int_G gQ(y,x)gdg=Q(y,x)^*\int_G g^2dg= Q(y,x)^*\mu_2(\Gamma(\Aa,*)) & \text{by  \eqref{eq-alphastargamma} and \eqref{eq-mu2inv}}\\
		&= Q(x,y) \mu_2(\Gamma(\Aa,*)).
	\end{align*}
	and 
	\[ D= \int_G g q(y)g^*gdg = \left(\int_G gdg\right) q(y)= \mu_1(G)q(y)=q(y)\mu_1(G), \]
	where in the last step we use the fact that $\mu_1(G)$ lies in the center of  $\Aa$ (see  Proposition~\ref{prop-momentproperties} above). 
	Combining these equations, the result follows. 
\end{proof}

\subsection{Second moments of matrix algebras}
\begin{prop}\label{prop-2ndmoment}
	Let $(\Aa,\sharp)$ be a polarizable algebra,  and endow the matrix algebra $M_n(\Aa)$ with the involution $*$ given by  $A\mapsto A^*= (A^\sharp)^T$ induced by $\sharp$ as 
	in \eqref{eq-induced}. Then by Proposition~\ref{prop-polargroupsMnA}, 
	the $*$-algebra $(M_n(\Aa), *)$ is polarizable. The second moment of this algebra is
	given by:
	\begin{equation}
		\label{eq-gammaMnA}
		\mu_2(M_n(\Aa)) = \dfrac{1}{n}\cdot\mu_2(\Aa)I,
	\end{equation} 
where $I$ is the identity matrix of $M_n(\Aa)$.
\end{prop}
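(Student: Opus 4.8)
The plan is to evaluate $\mu_2(M_n(\Aa))$ directly on the explicit polarizing subgroup furnished by Proposition~\ref{prop-polargroupsMnA}. Fix a finite polarizing subgroup $G\subset\Gamma(\Aa,\sharp)$ and a transitive group $H$ of $n\times n$ permutation matrices, and let $\Delta_G H$ be the associated polarizing subgroup of $(M_n(\Aa),*)$. Since $(M_n(\Aa),*)$ is polarizable and $\Delta_G H$ is a compact subgroup of its unitary group which generates the algebra, the invariance relation \eqref{eq-mu2inv} gives $\mu_2(M_n(\Aa))=\mu_2(\Delta_G H)=\int_{\Delta_G H}g^2\,dg$. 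Because $\Delta_G H$ is the semidirect product of $\Delta_G$ and $H$ with product Haar measure, I would rewrite this by Fubini as $\int_H\left(\int_{\Delta_G}(DP)^2\,dD\right)dP$, reducing the problem to two successive and elementary averages.

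The core computation is the entrywise evaluation of $(DP_\sigma)^2$ for $D=\diag(g_1,\dots,g_n)\in\Delta_G$ and a permutation matrix $P_\sigma\in H$. A direct index chase shows that the only nonzero entry in the $j$-th column sits in row $\sigma^2(j)$ and equals $g_{\sigma^2(j)}\,g_{\sigma(j)}$. Integrating over $\Delta_G\cong G^n$ coordinatewise, two cases arise. If $\sigma$ fixes $j$, this entry lies on the diagonal and equals $g_j^2$, whose $G$-average is $\mu_2(G)=\mu_2(\Aa)$. If $\sigma(j)\neq j$, then $\sigma^2(j)$ and $\sigma(j)$ are distinct indices, so the two factors average over independent coordinates and the entry contributes $\mu_1(G)\mu_1(G)=0$, using that $G$ is polarizing so $\mu_1(G)=0$ by \eqref{eq-gdg}. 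Hence $\int_{\Delta_G}(DP_\sigma)^2\,dD=\mu_2(\Aa)\sum_{j:\,\sigma(j)=j}E_{j,j}$; here I would invoke that $\mu_2(\Aa)$ is central in $\Aa$ (Proposition~\ref{prop-momentproperties}), so these scalar placements cause no trouble.

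It remains to average over $H$. The $(j,j)$ coefficient picks up $\mu_2(\Aa)$ exactly from those $P\in H$ fixing $j$, so the averaged $(j,j)$ entry is $\mu_2(\Aa)\cdot\frac{|\mathrm{Stab}_H(j)|}{|H|}$. Since $H$ acts transitively on $\{1,\dots,n\}$, the orbit-stabilizer theorem gives $|\mathrm{Stab}_H(j)|=|H|/n$ for every $j$, so each diagonal coefficient is exactly $\tfrac1n\mu_2(\Aa)$, yielding $\mu_2(\Delta_G H)=\tfrac1n\mu_2(\Aa)I$. Combined with the first paragraph this proves \eqref{eq-gammaMnA}. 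I expect the only genuine subtlety to be the bookkeeping in the $\Aa$-valued noncommutative integrals: one must justify, via the linearity property \eqref{eq-linearity}, that an integral of a product $g_a g_b$ over independent coordinates factors as $\mu_1(G)\mu_1(G)$, and must track the order of the (possibly noncommuting) factors throughout, even though the final answer is forced to be central.
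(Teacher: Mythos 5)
Your argument is correct and is essentially the paper's own proof: both pass to the explicit polarizing subgroup $\Delta_G H$ via \eqref{eq-mu2inv}, integrate over the semidirect product by Fubini, compute $(DP_\sigma)^2$ entrywise, and use $\mu_1(G)=0$ together with the independence of distinct diagonal coordinates to annihilate every term except those coming from permutations fixing the index in question. The only cosmetic differences are that the paper specializes $H$ to the cyclic group generated by an $n$-cycle and reduces to the $(1,1)$ entry via centrality of $\mu_2(M_n(\Aa))$, whereas you keep a general transitive $H$ and count fixed points by orbit--stabilizer (and the finiteness you impose on $G$ is nowhere needed --- $G=\Gamma(\Aa,\sharp)$ works).
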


\begin{proof} 
	Let $\Delta := \Delta_{\Gamma(\Aa,\sharp)}$ be the collection of diagonal matrices in $M_n(\Aa)$ with entries from the unitary group $\Gamma(\Aa,\sharp)$ 
	as in Proposition~\ref{prop-polargroupsMnA}. Let $H$ be the cyclic group of permutation matrices in $M_n(\Aa)$ corresponding to the $n$-cycle 
	$\sigma=(1,2,\dots, n)\in S_n$,
	i.e.
	\[H=\{P_{\sigma^k}: 0\leq k \leq n-1 \},\]
	where for $\tau\in S_n$, $P_\tau$ is the permutation matrix such that $P_\tau e_i=e_{\tau(i)}$, where $e_j$ is the column vector with 1 in the $j$-th row and zeroes 
	everywhere else.

	 A computation analogous to that in \eqref{eq-semidirect} shows that for $D=\diag(g_1,\dots,g_n)\in \Delta$  (with $g_j\in \Gamma(\Aa,\sharp)$)  and $P_{\sigma^k}\in H$ we have
	 \[ DP_{\sigma^k}D P_{\sigma^k}=P_{\sigma^{2k}}D'' D'\]
	 where
	 \[ D'=\diag\left(g_{\sigma^k(1)},\dots, g_{\sigma^k(n)}\right), \quad D''=\diag\left(g_{\sigma^{2k}(1)}, \dots, g_{\sigma^{2k}(n)}\right)\in \Delta_G.\]
	Therefore we have (with $e_1^T$ the row vector with 1 in the first slot and zeroes everywhere else):
	\begin{align*}
		e_1^TDP_{\sigma^k}D P_{\sigma^k}e_1&= e_1^T P_{\sigma^{2k}}D'' D'e_1\\
&=		e_1^T P_{\sigma^{2k}}D'' g_{\sigma^k(1)} e_1\\
	&=	e_1^T P_{\sigma^{2k}} g_{\sigma^{2k}(1)}g_{\sigma^k(1)} e_1\\
	&= e_1^T g_{\sigma^{2k}(1)}g_{\sigma^k(1)}e_{\sigma^{2k}(1)}\\
	&= \begin{cases}
		 g_{1}g_{\sigma^k(1)}& \text{ if $\sigma^{2k}=\mathrm{id}$}\\
		 0 &\text{otherwise}.
	\end{cases}
	\end{align*}

	 By Proposition~\ref{prop-momentproperties} the element  $\mu_2(M_n(\Aa))$ is in the center of $M_n(\Aa)$. It is not difficult to see that 
the center of $M_n(\Aa)$ consists of matrices of the form $zI$, where $z$ belongs to 
the center of $\Aa$ and $I$ is the identity matrix of $M_n(\Aa)$.
Therefore there is a $z$ in the center of $\Aa$  such that $\mu_2(M_n(\Aa)) = z I$. We have 
\begin{align*} z&= e_1^T \mu_2(M_n(\Aa))e_1=\int_H \int_\Delta e_1^T DPDPe_1 dDdP\\
	&=  \frac{1}{n} \sum_{k=0}^{n-1}\int_\Delta e_1^T D P_{\sigma^k} D P_{\sigma^k}e_1 dD = \frac{1}{n} \sum_{\substack{\sigma^{2k}=\mathrm{id}\\0 \leq k \leq n-1}}\int_\Delta  g_{1}g_{\sigma^k(1)}dD\\
&=	\frac{1}{n }\sum_{\substack{\sigma^{2k}=\mathrm{id}\\0 \leq k \leq n-1}}\int_{\Gamma(\Aa, \sharp)} \dots \int_{\Gamma(\Aa, \sharp)} g_{1} g_{\sigma^k(1)}dg_1 \dots dg_n\nonumber \\
	&= \dfrac{1}{n} \allof{ \int_{\Gamma(\Aa, \sharp)} \dots \int_{\Gamma(\Aa, \sharp)} g_1^2 dg_1 \dots dg_n + \sum_{\substack{\sigma^{2k}=\mathrm{id}\\1 \leq k \leq n-1}}\int_{\Gamma(\Aa, \sharp)} \dots \int_{\Gamma(\Aa, \sharp)} g_{1} g_{\sigma^k(1)}dg_1 \dots dg_n } \nonumber\\
	&= \dfrac{1}{n} \allof{ \int_{\Gamma(\Aa, \sharp)} g_1^2 dg_1 +  \sum_{\substack{\sigma^{2k}=\mathrm{id}\\1 \leq k \leq n-1}}\int_{\Gamma(\Aa, \sharp)} \int_{\Gamma(\Aa, \sharp)} g_{1} g_{\sigma^k(1)}dg_1 dg_{\sigma^k(1)}} \nonumber\\
	& = \dfrac{1}{n}\allof{ \mu_2(\Aa) + \sum_{\substack{\sigma^{2k}=\mathrm{id}\\1 \leq k \leq n-1}}\allof{\int_{\Gamma(\Aa, \sharp)} g_1 dg_1} \allof{\int_{\Gamma(\Aa, \sharp)} g_{\sigma^k(1)} dg_{\sigma^k(1)}}} \nonumber \\ 
	&=\frac{1}{n}\mu_2(\Aa),
\end{align*}
using in the last line the property \eqref{eq-gdg} of polarizable subgroups such as $\Gamma(\Aa, \sharp)$.
\end{proof}
To use the above, we need to know the second moment of the algebra $(\Aa, \sharp)$. We now do this for the division algebras.
\begin{prop}
	Let $\D$ be a real division algebra, thought of as a $*$-algebra with the standard conjugate-transpose involution.  Then we have
	\begin{equation}\label{eq-mu2d}
		 \mu_2(\D)=\frac{2}{\delta}-1,\quad \delta=\dim_\rl\D=1,2, \text{or } 4.
	\end{equation}
Consequently, for the $*$-algebra $(M_n(\D),*)$, we have
\begin{equation}\label{eq-mu2mnd}
\mu_2(M_n(\D))= \frac{1}{n}\left(\frac{2}{\delta}-1	\right)I_n,
\end{equation}
where $I_n$ is the $n\times n$ identity matrix. 
\end{prop}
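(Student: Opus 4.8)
The plan is to exploit the fact that for each of the three real division algebras, the unitary group $\Gamma(\D)$ for the standard conjugation is exactly the unit sphere $S^{\delta-1}$ in the Euclidean structure in which $1,i,j,k$ (as appropriate) form an orthonormal basis of $\D\cong\rl^\delta$, and that on this sphere the Haar measure is the uniform rotation-invariant probability measure. Indeed $u\bar u=\abs{u}^2$, the square of the standard Euclidean norm, so $\Gamma(\D)=\{u\in\D:\abs u=1\}$. Left translation by any $u\in\Gamma(\D)$ is an isometry of $\D$ (since $\abs{uv}=\abs u\,\abs v$), hence preserves the uniform surface measure; by uniqueness of the normalized Haar measure the two measures coincide. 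In particular the Haar measure is invariant under the full orthogonal group $O(\delta)$ acting on $\rl^\delta$, and this is the only property I will need.

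Next I would decompose $u\in\Gamma(\D)$ as $u=\Re u+u'$, where $\Re u\in\rl\cdot 1_\D$ and $u'$ lies in the orthogonal complement of $\rl\cdot 1_\D$ (the purely imaginary part), so that $\bar u=\Re u-u'$ and $\abs u^2=(\Re u)^2+\abs{u'}^2=1$. The one case-dependent input is the algebraic identity $(u')^2=-\abs{u'}^2\,1_\D$, immediate from $i^2=j^2=k^2=-1$ in each algebra (and trivial for $\D=\rl$). Squaring then gives $u^2=\big((\Re u)^2-\abs{u'}^2\big)1_\D+2(\Re u)\,u'$, separating the central scalar part from the imaginary part.

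Integrating over $\Gamma(\D)$ finishes the computation. The cross term $2(\Re u)u'$ integrates to $0$: each of its coordinates is odd under the reflection that flips the corresponding imaginary coordinate, an element of $O(\delta)$ under which the measure is invariant. For the remaining scalar term, $O(\delta)$-invariance forces $\int_{\Gamma(\D)}u_k^2\,du$ to be the same for every coordinate $k\in\{0,\dots,\delta-1\}$; since these $\delta$ integrals sum to $\int_{\Gamma(\D)}\abs u^2\,du=1$, each equals $1/\delta$. Hence $\int_{\Gamma(\D)}(\Re u)^2\,du=1/\delta$ and $\int_{\Gamma(\D)}\abs{u'}^2\,du=(\delta-1)/\delta$, so
\[ \mu_2(\D)=\left(\frac1\delta-\frac{\delta-1}{\delta}\right)1_\D=\left(\frac2\delta-1\right)1_\D, \]
which is \eqref{eq-mu2d}. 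Finally \eqref{eq-mu2mnd} is immediate: applying Proposition~\ref{prop-2ndmoment} with $\Aa=\D$ gives $\mu_2(M_n(\D))=\frac1n\mu_2(\D)I_n=\frac1n\big(\frac2\delta-1\big)I_n$.

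The only genuinely delicate points are the uniform identification of the Haar measure with the uniform surface measure and the clean bookkeeping that packages all three cases together; as a fallback one can simply verify the three instances directly ($\Gamma(\D)=\{\pm1\}$ gives $\mu_2=1$, $\Gamma(\D)=S^1$ gives $\int e^{2i\theta}\,d\theta/2\pi=0$, and $\Gamma(\D)=S^3$ gives $-\tfrac12$), matching $2/\delta-1$ for $\delta=1,2,4$.
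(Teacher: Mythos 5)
Your proof is correct, but it takes a genuinely different route from the paper's. The paper invokes the invariance relation $\mu_2(G)=\mu_2(\Gamma(\Aa,*))$ of Proposition~\ref{prop-secondmoment} (relation \eqref{eq-mu2inv}) to replace the full unitary group by a convenient finite generating subgroup --- $\{1\}$ for $\rl$, the fourth roots of unity $\{\pm 1,\pm i\}$ for $\cx$, and the quaternion group $\{\pm 1,\pm i,\pm j,\pm k\}$ for $\hx$ --- after which $\mu_2(\D)$ is a one-line finite sum of squares. You instead integrate directly over the full unitary group $\Gamma(\D)=S^{\delta-1}$: you identify the Haar measure with the $O(\delta)$-invariant surface probability measure (correctly, via uniqueness of Haar measure and the fact that left translations are isometries), split $u$ into its real and imaginary parts, use $(u')^2=-\abs{u'}^2\,1_\D$, kill the cross term by an odd reflection, and evaluate $\int u_k^2\,du=1/\delta$ by symmetry. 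Both arguments are complete. The paper's is computationally trivial but leans on the previously established second-moment invariance; yours is self-contained at this point and more illuminating, since it exhibits $2/\delta-1$ as the real direction contributing $+1/\delta$ against $\delta-1$ imaginary directions each contributing $-1/\delta$. The deduction of the matrix-algebra formula from Proposition~\ref{prop-2ndmoment} is identical in both.
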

\begin{proof}
We use the relation \eqref{eq-mu2inv} to compute the second moments in \eqref{eq-mu2d}. When $\D=\rl$, the trivial multiplicative  subgroup $G=\{1\}$ is compact, contained in the unitary group, and generates the algebra $\D$ over $\rl$, so we have
\[\mu_2(\rl)= \mu_2(G)= 1= \frac{2}{\dim_\rl \rl}-1.\]

When $\D=\cx$, we can take $G$ to be the group $\{\pm 1, \pm i\}$ fourth roots of 1. Then we have
\[ \mu_2(\cx)= \frac{1}{4}\left( 1^2+(-1)^2+ i^2+ (-i)^2\right)= \frac{1}{4}(1+1+(-1)+(-1))=0=\frac{2}{\dim_\rl \cx}-1\]
verifying the assertion. 

When $\D=\hx$, the 8 element ``quaternion group" $G=\{\pm 1, \pm i, \pm j, \pm k\}$ 
is compact and spans $\hx$ over $\rl$. We therefore have
\begin{align*}
	\mu_2(\hx)&=\frac{1}{8} \left(1^2+(-1)^2+i^2+(-i)^2+j^2+(-j)^2+k^2+(-k)^2\right)\\
	&= \frac{1}{8}(1+1-1-1-1-1-1-1)= -\frac{1}{2}= \frac{2}{\dim_\rl \hx}-1,
\end{align*}
verifying \eqref{eq-mu2d} in all cases. The formula \eqref{eq-mu2mnd} now follows using Proposition~\ref{prop-2ndmoment}.
\end{proof}

\subsection{End of proof of Theorems~\ref{thm-main} and \ref{thm-kappa}}
We have now assembled all the ingredients to prove these two results, which we will prove
simultaneously.  As in Theorem~\ref{thm-main}, let  $(\Aa,*)$ be a polarizable $*$-algebra,
and let $G\subset \Gamma(\Aa,*)$ be a polarizing subgroup, and let $Q, q$  and $\X$ also
 have the same meaning as in the statement of that theorem. By hypothesis \eqref{eq-gdg}
we have, using the computation \eqref{eq-prepol}, that for $x,y\in \X$ we have:
\begin{equation}\label{eq-prepol1}
\int_G q(x+gy)gdg=  Q(x,y)\cdot (1_\Aa+\mu_2(\Aa)), 
\end{equation}
so to complete the proof of Theorem~\ref{thm-main} we need to show that the element 
\begin{equation}\label{eq-1plusmu}
	1_\Aa + \mu_2(\Aa)	\end{equation}
of $\Aa$ is a unit of the ring $\Aa$, i.e., it is invertible, and its inverse  is the polarization constant $\kappa$ of the algebra $(\Aa,*)$, i.e.
\begin{equation}
	\label{eq-kappa3}
	\kappa=\kappa(\Aa,*)= (1_\Aa + \mu_2(\Aa))^{-1}.
\end{equation}

By Proposition~\ref{prop-kappa1}, the $*$-isomorphism  $\phi$ of \eqref{eq-isom} exists if $(\Aa,*)$ is polarizable, and by Proposition~\ref{prop-kappa2}
if $\phi$ exists, then $(\Aa,*)$ is polarizable.
 To complete the proof of Theorem~\ref{thm-kappa}, we  need to show that the multiplicative inverse of the element \eqref{eq-1plusmu} is given by the formula  \eqref{eq-kappacomp}.

Thanks to the invariance of the problem under $*$-isomorphisms, it is clear that we only need to consider the case where
$(\Aa,*)$ is already a $*$-direct sum of the matrix algebras $(M_n(\D_j), *)$ where the involution is the conjugate-transpose involution, i.e. the map $\phi$ of \eqref{eq-isom}  
is the identity. 
In this case the element \eqref{eq-1plusmu} is given by 
\begin{align*}
	1_\Aa + \mu_2(\Aa)&=		1_\Aa +\left( \mu_2(M_{n_1}(\D_1))I_1, \dots, \mu_2(M_{n_N}(\D_{N})I_N)\right)\\
	&= \left( (1+ \mu_2(M_{n_1}(\D_1))I_1, \dots ,(1+\mu_2(M_{n_N}(\D_N))I_N\right)\\
	&= \left( \left(1+ \frac{1}{n_1}\left(\frac{2}{\delta_1}-1	\right)\right)I_1, \dots ,\left(1+ \frac{1}{n_N}\left(\frac{2}{\delta_N}-1	\right)\right)I_N \right).
\end{align*} 
Now since for each $j$ we have $\delta_j\leq 4$ and $n_j\geq 1$, therefore
\[ 1+ \frac{1}{n_j}\left(\frac{2}{\delta_j}-1	\right)\geq \frac{1}{2}.\]
It follows that the element \eqref{eq-1plusmu} is invertible, thus completing the proof of Theorem~\ref{thm-main}. It also follows that  its inverse is
\[ (1_\Aa+\mu_2(\Aa))^{-1}=\left( \frac{n_1\delta_1}{(n_1-1)\delta_1 +2 }I_{n_1}, \dots,\frac{n_j\delta_j}{(n_j-1)\delta_j +2 }I_{n_j},\dots, \frac{n_N\delta_N}{(n_N-1)\delta_N+2 }I_{n_N}\right), \]
completing the proof of Theorem~\ref{thm-kappa}.

\section{Quadrances and Hermitian forms} \label{sec-jvn}

\subsection{Quadrances arising from Hermitian forms}
\begin{prop}\label{prop-diag-rest}
	If $Q$ is a Hermitian form on a left $\Aa$-module $\X$, then the diagonal restriction
	\[ q:\X\to \Aa, \quad q(x)=Q(x,x), \text{ where } x\in \X\]
	is a quadrance, and the \emph{generalized parallelogram identity holds}:
 for a compact subgroup  $H\subseteq \Aa^\times$  such that
 \begin{enumerate}[label=(\alph*)]
 	\item $q(hx)=q(x)$ for all $h\in H$, and 
 	\item $\mu_1(H)=0$
 \end{enumerate}
hold, 	we have  for $x,y \in X$,
	\begin{equation}
		\label{eq-genpar}q(x)+q(y)= \int_H q(x+hy)dh.
	\end{equation}
	
\end{prop}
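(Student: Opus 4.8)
The plan is to verify first that $q$ satisfies the three axioms of a quadrance (Definition~\ref{defn-quadrance}), and then to establish \eqref{eq-genpar} by expanding the integrand and integrating termwise. Both are essentially bookkeeping exercises in the Hermitian-form axioms, so no deep idea is needed; the care lies entirely in keeping track of which slot is $\Aa$-linear and on which side the involution lands.

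For the quadrance axioms, the first two are immediate. Axiom (1) is just $q(x)^*=Q(x,x)^*=Q(x,x)=q(x)$ by the conjugate symmetry $Q(x,y)^*=Q(y,x)$. For axiom (2), I would first record the identity $Q(x,\alpha y)=Q(x,y)\alpha^*$, obtained by applying conjugate symmetry to $Q(\alpha y,x)=\alpha Q(y,x)$ and using $(\alpha\beta)^*=\beta^*\alpha^*$; then $q(\alpha x)=Q(\alpha x,\alpha x)=\alpha Q(x,\alpha x)=\alpha q(x)\alpha^*$. For the continuity axiom (3), I would expand, using bi-additivity together with $Q(\lambda x,\alpha y)=\lambda Q(x,y)\alpha^*$ and $Q(\alpha y,\lambda x)=\lambda\alpha Q(y,x)$ (here $\lambda\in\rl$ is central and fixed by the involution),
\[ q(\lambda x+\alpha y)=\lambda^2 q(x)+\lambda Q(x,y)\alpha^*+\lambda\alpha Q(y,x)+\alpha q(y)\alpha^*. \]
Each summand is a continuous function of $(\lambda,\alpha)$ because multiplication in $\Aa$ and the involution $\alpha\mapsto\alpha^*$ are continuous $\rl$-linear operations on a finite-dimensional space; hence so is the sum.

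For the generalized parallelogram identity, I would first note that $h\mapsto q(x+hy)$ is continuous on the compact group $H$ (apply the continuity axiom just verified with $\lambda=1$ and $\alpha=h$), so the integral is well defined. Expanding by bi-additivity and using the identities above together with hypothesis (a), which gives $Q(hy,hy)=q(hy)=q(y)$, one finds
\[ q(x+hy)=q(x)+Q(x,y)\,h^*+h\,Q(y,x)+q(y). \]
Integrating over $H$ against normalized Haar measure, the two constant terms integrate to $q(x)$ and $q(y)$, while the two cross terms are treated by pulling the constant factors $Q(x,y)$ and $Q(y,x)$ past the integral sign (the maps of left and right multiplication are continuous and $\rl$-linear, so \eqref{eq-linearity} applies). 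Since the involution also commutes with integration by \eqref{eq-linearity}, we get $\int_H h^*\,dh=\left(\int_H h\,dh\right)^*=\mu_1(H)^*=0$ and $\int_H h\,dh=\mu_1(H)=0$ by hypothesis (b); both cross terms vanish, leaving $\int_H q(x+hy)\,dh=q(x)+q(y)$, which is \eqref{eq-genpar}.

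I do not anticipate a serious obstacle. The only subtle points are placing the involution on the correct side when transferring $h$ between the two slots of $Q$ — so that the cross terms genuinely become a right multiple of $\mu_1(H)$ and a left multiple of $\mu_1(H)^*$ respectively — and invoking \eqref{eq-linearity} twice, once to move the constant matrix factors outside the integral and once to interchange the involution with integration. Both are routine given the properties of the vector-valued integral already established.
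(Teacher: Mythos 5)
Your proposal is correct and follows essentially the same route as the paper's proof: verify the quadrance axioms by direct expansion (including the four-term expansion of $q(\lambda x+\alpha y)$ for continuity), then expand $q(x+hy)$ by bi-additivity, use hypothesis (a) on the $q(hy)$ term, and kill both cross terms via $\mu_1(H)=0$ and $\int_H h^*\,dh=\bigl(\int_H h\,dh\bigr)^*=0$. The only additions are your explicit remarks on well-definedness of the integral and the invocations of \eqref{eq-linearity}, which the paper leaves implicit.
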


Notice that any real $*$-algebra always has a subgroup $H$ of $\Aa^\times$ that satisfies the hypotheses of the above proposition. This is 
the two element subgroup $H=\{\pm 1_\Aa\}$ for which \eqref{eq-genpar} 
becomes the \emph{classical parallelogram identity} \eqref{eq-classpar}
which reduces to \eqref{eq-classic-parallelogram} for 
inner product spaces over $\cx$. 

\begin{proof}

	 For  $x \in \X,\alpha \in \Aa$ we have $Q(x,x)^* = Q(x,x)$, i.e. 
		$q(x)^* = q(x),$
		and $Q(\alpha x, \alpha x) = \alpha Q(x,x)\alpha^*$, i.e.
		$ q(\alpha x) = \alpha q(x) \alpha^*,$ thus verifying the first two conditions in the definition of a quadrance (Definition~\ref{defn-quadrance} above).
		For  $x,y, \in \X$, $\lambda\in \rl=\rl\cdot 1_\Aa, \alpha\in \Aa$, we have
		\begin{equation*}
			\begin{split}
				q(\lambda  x + \alpha y) &= Q(\lambda  x + \alpha y,\lambda  x + \alpha y) \\
				&=Q(\lambda  x,\lambda  x) = Q(\lambda  x, \alpha y) + Q(\alpha y, \lambda  x) + Q(\alpha y, \alpha y) \\
				&=\lambda^2  q(x)  + \lambda  Q(x,y) \alpha^* + \alpha Q(y,x) \lambda  + \alpha q(y)\alpha^*.
			\end{split}
		\end{equation*}
		Since multiplication in $\Aa$ is continuous, it follows that the mapping \eqref{eq-continuity} is continuous.  This shows that $q$ is a quadrance.
		
		For $x,y \in X$ a  computation yields \eqref{eq-genpar}:
		\begin{equation*}
			\begin{split}
				\int_H q(x+hy)dh &= \int_H Q(x+hy,x+hy)dh \\
				&= \int_H q(x)dh + \int_H q(hy)dh + \int_H Q(x,hy)dh + \int_H Q(hy,x)dh \\
				&= \int_H q(x)dh + \int_H q(y)dh + Q(x,y)\int_H h^*dh + \allof{\int_H hdh}Q(y,x) \\
				&=q(x) + q(y),
			\end{split}
		\end{equation*}
		where 
		$\displaystyle{\int_H h^*dh= \allof{\int_H h dh}^* = 0}$
			by $\rl$-linearity of $*$.

\end{proof}

\textbf{Remark:} It would be very interesting to know if the classical parallelogram identity 
\eqref{eq-classpar} can be replaced in Theorem~\ref{thm-jvn} by a generalized parallelogram identity with respect to some other compact  group $H\subset \Aa^\times$. 

	\subsection{Polarization of quadrances and proof of Theorem~\ref{thm-jvn}}
The proof of Theorem~\ref{thm-jvn} consists of applying the operation on the right hand side of \eqref{eq-polarization} to the given quadrance, and showing that the resulting function is a Hermitian form. The following is the first step in the argument:

\begin{prop}
	\label{prop-qgproperties}
	Let $(\Aa,*)$ be a  polarizable $*$-algebra and let $\X$ be a left $\Aa$-module. Let $q \from \X \to \Aa$ be a quadrance on $\X$ and  let $G\subset \Gamma(\Aa)$ be a polarizing subgroup.  Define 
	$Q_G \from \X \times \X \to \Aa$ by
	\begin{equation}\label{eq-Qg}
		Q_G(x,y) = \kappa\cdot\int_G q(x + gy)gdg,
	\end{equation}
where $\kappa$ is the polarization constant of $(\Aa,*)$.
	Then $Q_G$ satisfies the following properties, for all $x,y\in \X$ :
	\begin{enumerate}[label=(\alph*)]
		\item $Q_G(x,x)=q(x).$
		\item $Q_G(x,y)^* = Q_G(y,x)$.
\label{item-hermitian}
		\item $Q_G(hx,y) = h Q_G(x,y)$  for all $h\in G$.
				\item $Q_G(x,0) = 0$.
	\end{enumerate}
\end{prop}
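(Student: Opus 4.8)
The plan is to verify the four properties of $Q_G$ in the order (d), (a), (c), (b), each time reducing to the moment identities of Section~\ref{sec-moments}, the two defining axioms of a quadrance, and the invariance of Haar measure. Two standing facts about the polarization constant will be used throughout. First, $\kappa=(1_\Aa+\mu_2(\Aa))^{-1}$ (by \eqref{eq-kappa3}) lies in the center of $\Aa$: by Proposition~\ref{prop-momentproperties} the second moment $\mu_2(\Aa)=\mu_2(\Gamma(\Aa,*))$ is central, since the generating group $\Gamma(\Aa,*)$ spans $\Aa$. Second, $\kappa^*=\kappa$; this is visible from the explicit formula \eqref{eq-kappacomp}, whose blocks are real scalar multiples of identity matrices fixed by the conjugate-transpose, and can also be seen directly from the inversion-invariance of Haar measure, which gives $\mu_2(\Aa)^*=\int_{\Gamma(\Aa,*)}(g^*)^2\,dg=\int_{\Gamma(\Aa,*)}g^2\,dg=\mu_2(\Aa)$ after the substitution $g\mapsto g^{-1}=g^*$.

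Property (d) is immediate: $Q_G(x,0)=\kappa\int_G q(x)g\,dg=\kappa\, q(x)\,\mu_1(G)=0$ by \eqref{eq-gdg}. For property (a) I would repeat the key computation of Proposition~\ref{prop-computation} on the diagonal, but starting from the quadrance axioms rather than from a Hermitian form (which is not yet available). Writing $x+gx=(1_\Aa+g)x$ and applying the scaling axiom $q(\alpha x)=\alpha q(x)\alpha^*$ gives $q(x+gx)=(1_\Aa+g)q(x)(1_\Aa+g^*)$. Multiplying on the right by $g$ and using $g^*g=1_\Aa$ collapses the integrand to $(1_\Aa+g)q(x)(1_\Aa+g)$; expanding and integrating term by term, the two terms linear in $g$ vanish because $\mu_1(G)=0$, the constant term yields $q(x)$, and the term $\int_G g\,q(x)\,g\,dg$ equals $q(x)^*\mu_2(\Aa)=q(x)\mu_2(\Aa)$ by \eqref{eq-alphastargamma}, \eqref{eq-mu2inv}, and the self-adjointness $q(x)^*=q(x)$. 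Hence $Q_G(x,x)=\kappa\,q(x)(1_\Aa+\mu_2(\Aa))=q(x)$, the last step using the centrality of both $\kappa$ and $1_\Aa+\mu_2(\Aa)$ together with $\kappa(1_\Aa+\mu_2(\Aa))=1_\Aa$.

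Properties (c) and (b) are both changes of variable. For (c), with $h\in G$, factor $hx+gy=h(x+h^{-1}gy)$ and apply the scaling axiom to pull out $h$ on the left and $h^*$ on the right; the substitution $g\mapsto hg$ (left-invariance of Haar measure) absorbs $h^*g$ into $g$ since $h^*h=1_\Aa$, leaving $Q_G(hx,y)=\kappa\,h\int_G q(x+gy)g\,dg=h\,Q_G(x,y)$, where centrality of $\kappa$ moves it past $h$. Property (b) is the step I expect to demand the most care, since it must combine three ingredients simultaneously: the order-reversing anti-automorphism property of $*$, the self-adjointness $q(x+gy)^*=q(x+gy)$, and inversion-invariance of Haar measure. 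Applying $*$ under the integral via \eqref{eq-linearity} and using $\kappa^*=\kappa$ gives $Q_G(x,y)^*=\kappa\int_G g^*\,q(x+gy)\,dg$; rewriting $q(x+gy)=g\,q(g^{-1}x+y)\,g^*$ through the scaling axiom and cancelling $g^*g=1_\Aa$ turns the integrand into $q(g^{-1}x+y)g^*$, after which the substitution $g\mapsto g^{-1}$ (so $g^*\mapsto g$) produces exactly $\kappa\int_G q(y+gx)g\,dg=Q_G(y,x)$. The only genuine obstacle is keeping the left/right placement of the unitary factors and of $\kappa$ consistent across these manipulations; once the two standing facts about $\kappa$ are established, each identity reduces to a single application of Haar invariance.
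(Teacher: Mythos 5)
Your proof is correct and follows essentially the same route as the paper's: the same expansion $q((1_\Aa+g)x)=(1_\Aa+g)q(x)(1_\Aa+g)^*$ for (a), the same left-translation $g\mapsto hg$ for (c), and the same combination of the anti-automorphism, self-adjointness of $q$, the scaling axiom, and inversion-invariance of Haar measure for (b). The only (harmless) difference is that you additionally justify $\kappa^*=\kappa$ and the centrality of $\kappa$ intrinsically via $\mu_2(\Aa)^*=\mu_2(\Aa)$ and Proposition~\ref{prop-momentproperties}, where the paper simply reads these off from the explicit formula \eqref{eq-kappacomp}.
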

\begin{proof}
	\begin{enumerate}[label=(\alph*), wide]
		\item We compute 
		\begin{align*}
				Q_G(x,x) &=   \kappa\cdot\int_G q((1+g)x)gdg = \kappa\cdot\int_G (1+g)q(x)(1+g)^*gdg \\
				&= \kappa\cdot \allof{q(x)\int_G gdg + q(x)\int_G dg + \int_G gq(x)gdg + \allof{\int_G g dg} q(x)} \\
				&= \kappa\cdot \allof{q(x) + q(x)^*\mu_2(\Aa)} =\kappa\cdot(1_\Aa+\mu_2(\Aa)) q(x) = q(x),
		\end{align*}
		using the facts that $q(x)^*=q(x)$ and $\mu_1(G)=0$,
		
	\item We see from the formula \eqref{eq-kappacomp} that 
		the polarization constant $\kappa$ satisfies $\kappa^*=\kappa$  and lies in the center of  $\Aa$. Therefore we 
		can compute:
		\begin{equation*}
			\begin{split}
				Q_G(x,y)^* &=  \kappa\cdot\int_G (q(x+gy)g)^* dg = \kappa\cdot\int_G g^*q(x+gy) dg \\
				&= \kappa\cdot\int_G q(g^*x+y) g^* dg = \kappa\cdot \int_G q(gx+y)g dg \\
				&=Q_G(y,x).
			\end{split}
		\end{equation*}

		\item This follows by an application of Haar invariance, and the properties of $\kappa$ used in the previous part:
		\begin{equation*}
			\begin{split}
				Q_G(hx,y) &=\kappa\cdot \int_G q(hx+gy)gdg = \kappa\cdot\int_G h q(x+h^*gy)h^*g dg \\
				&= \kappa\cdot h \int_G q(x + h^*gy)h^*gdg = h \kappa\cdot\int_G q(x+gy)gdg \\
				&= hQ_G(x,y).
			\end{split}
		\end{equation*}
			\item We have:  $\displaystyle{Q_G(x,0) = \kappa\cdot \int_G q(x)gdg = 0.}$
	\end{enumerate}
\end{proof}

\begin{proof}[Proof of Theorem~\ref{thm-jvn}] Let $G$ be a polarizing subgroup of $\Gamma(\Aa,*)$ (for example, we could take 
	$G$ to be $\Gamma(\Aa,*)$ itself). Define the map $Q_G:\X\times \X\to \Aa$ by the formula
	\eqref{eq-Qg}.	
	We will prove that
	$Q_G$ is a Hermitian form on $\X$;  by  part (a)  of Proposition \ref{prop-qgproperties}, the result would follow if we can also show uniqueness:  if $Q$ is a  Hermitian form such that 
	$Q(x,x) =q(x)$, then we have $Q=Q_G$. But by the polarization identity \eqref{eq-polarization} of Theorem~\ref{thm-main} we have for $x,y\in \X$ that
	\[ Q(x,y) = \kappa \cdot\int_G q(x+gy)gdg = Q_G(x,y). \]

	Thanks to the properties of $Q$ already established in Proposition~\ref{prop-qgproperties},
	 it is enough to show that $Q_G$ is $\Aa$-linear in the first argument. 
For any $x,y \in \X$ we have
	\begin{align*}
			Q_G(2x,y) &=\kappa\cdot\int_G q(x+x+gy)gdg \\
			&= \kappa\cdot\int_G \allof{2\allof{q(x) + q(x+gy)}-q(gy)}gdg \\
			&= \kappa\cdot \allof{2 \int_G q(x)gdg + 2\int_G q(x+gy)gdg - \int_G g q(y) dg}\\
			&= 2\kappa\cdot\int_G q(x+gy)gdg \\
			&= 2Q_G(x,y),
	\end{align*}
where we have used the parallelogram identity~\eqref{eq-classpar} to get the second equality. Using part \ref{item-hermitian} of Proposition~\ref{prop-qgproperties} we have:
	\begin{equation}
		\label{2hom}
		Q_G(x,2y) = Q_G(2y,x)^* = 2Q_G(y,x)^* = 2Q_G(x,y).
	\end{equation}
	Now, for $x,y,z \in \X$ we have using the classical  parallelogram identity ~\eqref{eq-classpar} that 
	\begin{equation*}
		\begin{split}
			Q_G(x,z) + Q_G(y,z)
			&=\kappa\cdot\int_G \allof{q(x+gz) + q(y + gz)}gdg \\
			&= \kappa\cdot\int_G \dfrac{1}{2}\allof{q(x+y + 2gz) + q(x-y)}gdg \\
			&= \dfrac{1}{2} \kappa\cdot\int_G q(x+y+ g(2z))gdg + \dfrac{1}{2}\kappa\cdot q(x-y)\cdot\int_G  gdg\\
			&= \dfrac{1}{2}Q_G(x+y,2z),
		\end{split}
	\end{equation*}
	and so, by \eqref{2hom}, it follows that $Q_G(x,z) + Q_G(y,z) = Q_G(x+y,z)$ for all $x,y,z \in X$.
From this we deduce easily that $Q_G(\cdot, z)$ is $\Q$-homogeneous, i.e. 
for $\lambda\in \Q$ we have
\begin{equation}\label{eq-homogeneity}
Q(\lambda x,y)=\lambda Q(x,y).
\end{equation}
To extend \eqref{eq-homogeneity} to the situation when $\lambda\in\rl$, fix
 $x,y \in \X$ and $\lambda\in \rl$ and let $\{\lambda_n\}$ be a sequence in $\Q$ converging to $\lambda$. Let$\{F_n\}$ be the sequence of $\Aa$-valued functions on $G$ given by 
$F_n(g)=q(\lambda_nx+gy)g$. Since the $\Aa$-valued function $(\xi, g)\mapsto q(\xi x+gy)g$ is uniformly continuous on the compact space $[\lambda-1, \lambda+1]\times G$, it follows that 
$F_n\to F$ uniformly on $G$ where $F(g)=q(\lambda x+gy)g$. Therefore
\begin{align*}
	\lambda Q_G(x,y) & = \lim_{n \to \infty} \lambda_n Q_G(x,y) = \lim_{n \to \infty}Q_G(\lambda_n x,y) 
		 = \kappa\cdot\lim_{n \to \infty} \int_G q(\lambda_n x + gy)gdg\\& = \kappa\cdot\int_G q(\lambda x + gy)gdg\quad \text{by uniform convergence}\\& = Q_G(\lambda x,y).
\end{align*}
	Finally, letting $\alpha = \sum_{i=1}^m \lambda_i g_i \in \Aa $ for some $\lambda_1 ,\dots, \lambda_m \in \rl$ and $g_1,\dots , g_m \in G$ we have
	\[ Q_G(\alpha x,y) = Q_G( \sum_{i=1}^m \lambda_j g_j x ,y) = \sum_{i=1}^m \lambda_ig_i Q_G(x,y) = \alpha Q_G(x,y) \]
	completing the proof that $Q_G$ is a Hermitian form. We have already proved uniqueness above. 
\end{proof}

\section{Some Examples}
\subsection{The three  real division algebras} \label{sec-divisionalgebras}We illustrate Theorems~\ref{thm-main} and \ref{thm-kappa} by classifying all the polarizing subgroups of $\rl, \cx$ and $\hx$, and therefore obtaining all polarization formulas for Hermitian forms defined on vector spaces over these scalars. By \eqref{eq-mu2d},
the polarization constants of the algebras (made into $*$-algebras with the usual conjugation operations), are
\begin{equation}\label{eq-kappa2}
 \kappa(\D)= \left(1+\mu_2(\D)\right)^{-1}= \left(1+\left(\frac{2}{\dim_\rl(\D)}-1	\right) \right)^{-1}= \frac{1}{2}\dim_\rl \D=\begin{cases}\frac{1}{2} & \text{ if $\D=\rl$}\\1 & \text{ if $\D=\cx$}\\2 &\text{ if $\D=\hx$}.
\end{cases} \end{equation}

	\subsubsection{The real numbers} In this case the unitary group is $\Gamma(\rl, \mathrm{id})=\{ x\in \rl: x^2=1\}=\{\pm 1\}$. Therefore the only polarizing subgroup of the reals is the group $\{\pm1\}$. The
	polarization formula for a  Hermitian form $Q$ on a real vector space is (which in this situation is simply a bilinear form):
\begin{align*}
	 Q(x,y)&= \kappa\int_{\{\pm1\}} q(x+gy)g dg= \frac{1}{2}\left(\frac{1}{2}\cdot q(x+y)+\frac{1}{2}\cdot q(x-y)\cdot(-1)\right)\\&= \frac{1}{4}\left(q(x+y)-q(x-y)\right). 
\end{align*}
	Of course, at the bottom, after diagonalizing the bilinear form, this is nothing but the identity $xy= \frac{1}{4}\left((x+y)^2-(x-y)^2\right)$ for real numbers.
	\subsubsection{Locus classicus: complex numbers} For $(\cx, \mathrm{conj})$, the unitary group is $\Gamma(\cx, \mathrm{conj})=\{z\in \cx: \abs{z}=1\}=U(1)$. Any compact subgroup of 
	this one dimensional Lie group is either a finite subgroup, or the whole of $U(1)$. It is easy to see that a finite subgroup of $U(1)$ is a cyclic group $G_N=\{\omega^j: 0\leq j\leq N-1\}$ 
	generated by the primitive $N$-th root  of unity $\omega=\exp\left(\frac{2\pi i}{N}\right)$. Notice that  $\Span_\rl G_N=\cx$ if and only if $N\geq 3$. It is clear also that  $\Span_\rl U(1)=\cx$, 
	and thanks to part (b) of Proposition~\ref{prop-firstmoment}, condition \eqref{eq-gdg} holds for both $G_N$ and $U(1)$.  It follows that any polarizing subgroup of $\cx$ is either $U(1)$ or 
	$G_N$ for $N\geq 3$. 
	
	For the group $G_N$, we obtain the ``$N$-th root polarization identity" (\cite[p. 12, Problem 1.10]{young} or \cite[pp. 53--54]{dangelo})
	\begin{equation}\label{eq-pol-c}
		Q(x,y)= \frac{1}{N} \sum_{k=0}^{N-1} q(x+\omega^ky)\omega^k, \quad \omega= e^{\frac{2\pi i}{N}}, \text{ where }  N\geq 3, 
	\end{equation}
	of which  \eqref{eq-pol-classical}  is the special case for $N=4$  and $Q=\ipr{\cdot, \cdot}$. For the polarizing group $G=U(1)$ we obtain the ``integral polarization identity"
	\begin{equation}\label{eq-pol-int}
		Q(x,y)= \frac{1}{2\pi}\int_0^{2\pi} q(x+e^{i\theta}y)e^{i\theta}d\theta,
	\end{equation}
which  can also be obtained from \eqref{eq-pol-c} as the limit when $N\to \infty$, by interpreting its right hand side as a Riemann sum of the integral on the right hand side of \eqref{eq-pol-int}. Notice that this passage to the limit is possible only because in the generalized polarization identity \eqref{eq-polarization}, the constant $\kappa$ depends only on the
algebra of scalars $\Aa$ and not on the polarizing group $G$.

\subsubsection{The quaternions} Just as with the complex numbers, there is an abundant 
supply of polarizing subgroups in the quaternions:
\begin{prop}
	A  subgroup of $\hx^\times$  is  polarizing if and only if it is compact and nonabelian. 
\end{prop}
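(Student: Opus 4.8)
The plan is to verify the three defining conditions of a polarizing subgroup---compactness, generation of $\hx$, and vanishing of the first moment $\mu_1(G)$---against the two conditions in the statement (compactness and nonabelianness), handling the two implications separately. Throughout, $\hx$ carries its standard conjugation, so that $\Gamma(\hx)=\{\abs{q}=1\}$ is the group of unit quaternions, and I will freely use the criterion (noted after Definition~\ref{defn-pol-algebra}) that a subgroup generates $\hx$ if and only if its real linear span is all of $\hx$.

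For the forward implication, suppose $G$ is polarizing. Compactness is immediate from the definition, so the content is to show $G$ is nonabelian, which I would do by contraposition: an abelian subgroup of the unit quaternions cannot generate $\hx$. Indeed, if $G$ is abelian and $G\subseteq\{\pm 1\}$, then $\Span_\rl G=\rl\neq\hx$; otherwise $G$ contains a non-real unit quaternion $g$, and since every element of $G$ commutes with $g$, all of $G$ lies in the centralizer of $g$ in $\hx$, which is the commutative subalgebra $\rl[g]\cong\cx$ of real dimension two. In either case $\Span_\rl G$ is a proper subspace of $\hx$, so $G$ fails to generate $\hx$, contradicting that it is polarizing. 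Hence a polarizing $G$ is nonabelian.

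For the converse, suppose $G$ is compact and nonabelian, and I check the three conditions in turn. First, $G\subseteq\Gamma(\hx)$: the modulus $\abs{\cdot}\from\hx^\times\to\rl_{>0}$ is a continuous group homomorphism, so $\abs{G}$ is a compact subgroup of $\rl_{>0}$, forcing $\abs{G}=\{1\}$ and hence $G\subseteq\{\abs{q}=1\}=\Gamma(\hx)$. Second, $G$ generates $\hx$: choose non-commuting $g,h\in G$ and write $g=\cos\alpha+\sin\alpha\,u$, $h=\cos\beta+\sin\beta\,v$ with $u,v$ unit imaginary quaternions; since $g,h$ are non-central they are non-real, and non-commutativity forces $u,v$ to be $\rl$-linearly independent. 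As $\Span_\rl G$ is a subalgebra (being the linear span of a set closed under multiplication), it contains $1$, $u$, $v$, and the product $uv$, whose imaginary part is the cross product $u\times v$, perpendicular to both $u$ and $v$; thus $\{1,u,v,u\times v\}$ is a basis of $\hx$ lying in $\Span_\rl G$, so $\Span_\rl G=\hx$. Third, $\mu_1(G)=0$: since $G$ generates $\hx$, the subalgebra generated by $G$ is $\hx$ itself, which is a division algebra and hence simple, and $G\neq\{1\}$, so condition (b) of Proposition~\ref{prop-firstmoment} applies and yields $\mu_1(G)=0$.

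The main obstacle is the generation step in the converse: one must genuinely extract linearly independent imaginary parts from a non-commuting pair and then exploit closure of the span under multiplication to recover the full three-dimensional imaginary space via the cross product. The two remaining steps are short, resting respectively on the modulus homomorphism and on Proposition~\ref{prop-firstmoment}, and together with the centralizer observation in the forward direction they complete the equivalence.
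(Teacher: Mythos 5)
Your proof is correct, and it diverges from the paper's in the one place where there is real content: showing that a compact nonabelian subgroup generates $\hx$. The paper argues indirectly: it lets $\Bb$ be the subalgebra generated by $G$, observes that $\Bb$ is a noncommutative polarizable algebra, and then invokes the structure theorem (Theorem~\ref{thm-kappa}) to list all possible semisimple algebras of real dimension at most $3$ — namely $\rl$, $\rl\oplus\rl$, $\cx$, $\rl\oplus\rl\oplus\rl$, $\rl\oplus\cx$ — and notes that all of them are commutative, forcing $\Bb=\hx$. You instead argue directly: a non-commuting pair $g,h$ yields, after subtracting real parts, two linearly independent unit imaginary quaternions $u,v$ in $\Span_\rl G$, and closure of the span under multiplication then delivers $u\times v$ (the imaginary part of $uv$), so $\{1,u,v,u\times v\}$ is a basis of $\hx$ inside the span. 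Your route is more elementary and self-contained — it uses nothing beyond quaternion arithmetic — while the paper's exploits machinery it has already built and generalizes more readily to other algebras. The remaining steps (the modulus homomorphism forcing $G\subseteq Sp(1)$, the appeal to Proposition~\ref{prop-firstmoment}(b) via simplicity of $\hx$ for $\mu_1(G)=0$, and the forward direction via commutativity of the span of an abelian group) match the paper's, with your centralizer observation merely making explicit what the paper leaves as a one-line remark. One small point of presentation: when you conclude that $u\times v$ lies in $\Span_\rl G$, it is worth saying explicitly that $u\times v = uv + \ipr{u,v}1$, so that it is obtained from the elements $uv$ and $1$ already known to lie in the span.
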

\begin{proof} 
	Let $G$ be a compact subgroup of $\hx^\times$. The map $\hx^\times\to \rl^+$ given by
	$x \mapsto \norm{x}$ is a continuous homomorphism of the group of nonzero quaternions under multiplication to the group of positive real numbers under multiplication,
	and therefore the image of a compact subgroup $G$ of $\hx\setminus\{0\}$ is a compact subgroup of $\rl^+$, i.e., $\{1\}$, so that $G\subset Sp(1)$.

Now  suppose  that $G$ is polarizing in $\hx$, so in particular  $G$ generates $\hx$.
	Since $\hx$ is not commutative, it follows that $G$ must be nonabelian.
	
	For the converse assume that $G$ is nonabelian, and let $\Bb$ be the subalgebra of $\hx$ generated by the group $G$. Then $\Bb$ (with the involution induced from $\hx$)  is noncommutative and polarizable, since it is generated by a  compact nonabelian subgroup $G\subset \Gamma(\Bb,\mathrm{conj}) $ of
	unitary elements (see Proposition~\ref{prop-structure1} ).  Then by Theorem~\ref{thm-kappa}  the algebra $\Bb$ is isomorphic to $\bigoplus_{j=1}^N M_{n_j}(\D_j)$ for some real division algebras $\D_j$ and positive integers $n_j$. Therefore,
	if $\delta_j=\dim_\rl(\D_j)\in \{1,2,4\}$ we will have
	${ 1\leq \sum_{j=1}^N \delta_j n_j^2 \leq 3.}$
	It follows that $\Bb$ is isomorphic as an algebra to one of
	\[ \rl, \rl\oplus \rl, \cx, \rl\oplus \rl\oplus\rl, \rl\oplus \cx,\]
	each of which is commutative. Since $\hx$ is not commutative, this shows that $\Bb=\hx$.
		Since $\hx$, being a division ring, is simple, the condition \eqref{eq-gdg} now follows from part (b) of Proposition~\ref{prop-firstmoment}.

\end{proof}

Therefore the task of classifying all polarizing subgroups of 
the quaternions (and therefore finding all quaternionic polarization identities) is reduced to the classification of 
closed nonabelian subgroups of $Sp(1)$. Notice that
if $G$ is a polarizing subgroup in a noncommutative polarizable algebra $(\Aa, *)$, then any subgroup $H$ of the unitary group which is conjugate to $G$ (i.e., there is a
 $\gamma\in \Gamma(\Aa,*)$ such that $H=\gamma G \gamma^{-1}$) is clearly also polarizing. Therefore it will be sufficient to classify polarizing subgroups up to conjugacy.

\begin{prop}\label{prop-quaternions}
	A subgroup of $\Gamma(\hx, \mathrm{conj})=Sp(1)$  is  polarizing if and only if it is  one of the following:
	\begin{enumerate}
				\item The full group $Sp(1)$.
				\item A subgroup conjugate to the group
				\begin{equation}
					\label{eq-onedim}
					\{ \cos \theta + i\sin \theta : 0\leq \theta \leq 2\pi\}\cup \{j\cos \phi  + k\sin \phi  : 0 \leq\phi \leq 2 \pi \},
				\end{equation}
			 a Lie group isomorphic to $O(2)$. 
		\item A finite subgroup of $\hx$ 
		conjugate to one of the groups in the following table, 
		where 
		\[ \omega = \frac{1}{2}\left(-1+i+j+k	\right),\]
		and the last row corresponds to an infinity of groups, one for each integer $n\geq 2$. 
		\begin{center}

			\renewcommand{\arraystretch}{1.5} 
			\emph{
				\begin{tabular}{|l|c|c|}
					\hline
					Traditional name&Order  & Generators \\
					\hline\hline
					Binary tetrahedral (2$A_4$)	&24 &$\langle \omega, i\rangle$  \\
					\hline
					Binary octahedral (2$S_4$)	& 48 &$\left\langle \omega, \frac{1}{\sqrt{2}}(j+k)\right\rangle$  \\
					\hline
					Binary icosahedral (2$A_5$)	& 120 &$\left\langle \omega, \frac{1}{2}i + \frac{\sqrt{5}-1}{4}j+\frac{\sqrt{5}+1}{4} k\right\rangle$   \\
					\hline
					Binary dihedral	(2$D_{2n}$)	&$4n$  &$\left\langle \cos \left(\frac{\pi}{n} \right)+ \sin \left(\frac{\pi}{n}\right)i, j \right\rangle$  \\
					&($n\geq 2 $ an integer)  &  \\
					\hline
				\end{tabular}
		}		\end{center}
	\end{enumerate}
\end{prop}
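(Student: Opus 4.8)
The plan is to reduce the statement to a classical classification problem for the rotation group and then transport it through the double cover. By the preceding proposition, a subgroup of $\hx^\times$ is polarizing precisely when it is compact and nonabelian, and since $Sp(1)$ is itself compact, ``compact'' coincides with ``closed'' for its subgroups. As already observed in the text, the polarizing property is invariant under conjugation by elements of $\Gamma(\hx,\mathrm{conj})$, so it suffices to enumerate, up to conjugacy, the closed nonabelian subgroups of $Sp(1)$.

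The main tool will be the identification $Sp(1)\cong SU(2)$ together with the two-to-one covering homomorphism $\pi\colon Sp(1)\to SO(3)$ with kernel $\{\pm 1\}$. The key structural fact I would isolate is that $-1$ is the \emph{unique} element of order two in $Sp(1)$: an involution has eigenvalues in $\{\pm 1\}$ and determinant $1$, which forces it to equal $-1$. Consequently any finite subgroup $G\subset Sp(1)$ with $-1\notin G$ has odd order, so its image $\pi(G)$ has odd order and is therefore cyclic; hence $G$ is cyclic and abelian. This shows that every \emph{nonabelian} finite subgroup of $Sp(1)$ contains $-1$, and thus equals the full preimage $\pi^{-1}(\overline G)$ of its image $\overline G=\pi(G)$. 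Moreover, since such subgroups contain $\ker\pi$ and are full preimages, their conjugacy in $Sp(1)$ is governed exactly by the conjugacy of their images in $SO(3)$.

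I would then invoke the classical classification of closed subgroups of $SO(3)$ up to conjugacy: the positive-dimensional ones are $SO(2)$, $O(2)$ and $SO(3)$, while the finite ones are the cyclic groups, the dihedral groups, and the three exceptional groups $A_4$, $S_4$ and $A_5$. Lifting through $\pi$ and discarding the abelian outcomes yields the list. In the positive-dimensional case, a closed subgroup whose identity component is all of $Sp(1)$ is $Sp(1)$ itself, giving item~(1); if the identity component is a maximal torus $T$ (a circle), then $T$ is normal in the subgroup, so the subgroup lies inside $N_{Sp(1)}(T)=\pi^{-1}(O(2))$ and is therefore either $T$ (abelian, discarded) or all of $\pi^{-1}(O(2))$, which is the group \eqref{eq-onedim}; since all maximal tori are conjugate, this gives item~(2). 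In the finite case, the cyclic groups and their preimages are abelian and discarded, whereas the dihedral, tetrahedral, octahedral and icosahedral groups lift to the binary dihedral, binary tetrahedral, binary octahedral and binary icosahedral groups of item~(3), and conjugacy in $SO(3)$ descends to conjugacy of these binary groups.

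The remaining work, which I expect to be the main obstacle, is bookkeeping rather than conceptual: one must exhibit explicit unit-quaternion generators for each binary group so that they match the table (for instance verifying that $\langle \omega, i\rangle$ with $\omega=\tfrac12(-1+i+j+k)$ realizes the binary tetrahedral group of order $24$, and locating the correct icosahedral generator involving $\sqrt5$), and one must confirm that the listed generators have the stated orders and generate groups of orders $24$, $48$, $120$ and $4n$. Verifying that the positive-dimensional nonabelian subgroup is exactly $\pi^{-1}(O(2))$ requires only the identity $j\,e^{i\theta}\,j^{-1}=e^{-i\theta}$ in $Sp(1)$, together with the observation that \eqref{eq-onedim} is precisely $U(1)\cup U(1)\,j$.
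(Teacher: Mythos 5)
Your proposal is correct and follows essentially the same route as the paper: reduce via the preceding proposition to classifying the closed nonabelian subgroups of $Sp(1)$ up to conjugacy, identify the unique one-dimensional case with the normalizer of a maximal torus (the group \eqref{eq-onedim}), and settle the finite case through the double cover $Sp(1)\to SO(3)$ and the classical classification of finite rotation groups. Your observation that $-1$ is the only involution in $Sp(1)$ --- so that every nonabelian finite subgroup contains the kernel $\{\pm 1\}$ of the covering map, is the full preimage of its image, and has its conjugacy class determined by that of its image in $SO(3)$ --- is a useful addition that makes explicit a lifting step the paper's proof leaves to the reader.
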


\begin{proof} The simply connected Lie group $Sp(1)$ is  geometrically the three dimensional sphere $S^3$ in $\rl^4$ and is isomorphic to $SU(2)$. Its  Lie algebra $L$  	can  therefore be identified with the  tangent space to $S^3$ at $(1,0,0,0)$ and therefore to the  space of purely vector
	quaternions $\rl^3=\{x_1i+x_2j+x_3k: x_1, x_2, x_3\in \rl\}$. The Lie bracket on $L$ is 
	 \[ [X,Y]=XY-YX=( -\langle X, Y\rangle + X\times Y)- (-\langle Y, X\rangle +Y\times X)= 2 (X\times Y), \]
	 where $\times$ denotes the cross product on $\rl^3$ and $\langle\cdot,\cdot\rangle$ is the inner product. 
	This Lie algebra $L$ cannot have a subalgebra of dimension 2, since as soon as we have in the subalgebra two linearly independent vectors $u,v$, their Lie bracket $2(u\times v)$ is another vector linearly independent of $u$ and $v$. Therefore a polarizing subgroup $G$ of $Sp(1)$ can be of dimension 3, 1 or 0 (i.e. finite). 	The only Lie subalgebra of  $L$ of dimension 3 is $L$ itself, and the corresponding subgroup of $Sp(1)$ is $Sp(1)$, which is
	polarizing as the unitary group of $\hx$.

		Now let $G$ be a polarizing subgroup of $Sp(1)$ of dimension 1, and let $G^0$ be its identity component. Then there is a purely vector quaternion ${u}\in L$ with $\norm{u}=1$
		 such that $G^0=\{\exp(\theta {u}): \theta \in \rl\}$. Since $u^2=-1$, it follows that $\exp(\theta u)=\cos \theta+ (\sin \theta )u$.  Since $G^0$ is abelian, 
		  $G\not = G^0$. Let $z\in G\setminus G^0$, so that, since $G^0$ is a normal subgroup of $G$ we have $zG^0z^{-1}=G^0$.
		 Therefore for each $\theta\in \rl$ there is a $\phi\in \rl$ such that $z \exp(\theta u)z^{-1}= \exp(\phi u)$. By looking at the scalar and vector parts, we see that $\cos\theta=\cos \phi$ and $(\sin \theta) zuz^{-1}=(\sin \phi) u$. The first equation gives $\theta=\pm \phi \,\mathrm{  mod  }\, 2\pi$, which when substituted into the second gives $zuz^{-1}=\pm u$.
		  Recall that for a unit quaternion $z\in Sp(1)$, the map $\rl^3 =\Im \hx\ni w\mapsto zwz^{-1}$ is a rotation of $\rl^3$ whose axis is spanned by the vector part $\Im z\in \rl^3$ of $z$.
		  Therefore, if $zuz^{-1}=u$, then $z=\cos\alpha+ (\sin \alpha) u$ for some $\alpha\in \rl$, which means that $z\in G^0$ which contradicts $z\not \in G^0$. Therefore we must have 
		  $zuz^{-1}=-u$. This means that the rotation $w\mapsto zwz^{-1}$ of $\rl^3$ has for axis a unit vector $v\in \rl^3$ orthogonal to $u$ and has angle of rotation $\pi$, so that 
		  $z=\cos\left(\dfrac{\pi}{2}\right)+ \sin\left(\dfrac{\pi}{2}\right)v=v$. Therefore each element of  $G\setminus G^0$ is a unit vector of $\Im(\hx)$  orthogonal to $u$. If $v\in G\setminus G^0$, notice that each element of the coset $G^0v= \{ \left(\cos \theta\right)v+ (\sin \theta )uv:\theta\in \rl\}$ is a unit vector of $\Im(\hx)$ orthogonal to $u$, therefore
		  \[ G= \{ \cos \theta + (\sin \theta)u : 0\leq \theta \leq 2\pi \}\cup \{\left(\cos \phi\right) v + (\sin \phi)uv  : 0 \leq\phi \leq 2 \pi \}.\]
		  There is an element $A\in SO(3)$ which maps the ordered basis $\{u,v, uv=u\times v\}$ into the standard basis $\{i,j,k\}$. Thanks to the double covering $Sp(1)\to SO(3)$ then there is a $\gamma\in Sp(1)$  such that for each $w\in \rl^3=\Im(\hx)$ we have $Aw =\gamma w\gamma^{-1}$. It follows that $\gamma G \gamma^{-1}$ is the group of \eqref{eq-onedim}.
		
	Finite multiplicative subgroups of $\hx^\times$ 
	can be classified up to conjugacy by using the double cover $Sp(1)\to SO(3)$ and then using the well-known classical classification of the finite subgroups of rotations of three dimensional space (see, e.g., \cite[p.~33]{conway}).  Using this classification one obtains the  complete list of nonabelian finite subgroups of $\hx^\times$ in the table above, and therefore of finite polarizing subgroups of $\hx$.

\end{proof}

\textbf{Remarks:}
\begin{enumerate}[wide]
	\item 
	Special cases of the quaternionic polarization  formula have been obtained previously in \cite{kurepa} and also in the unpublished PhD thesis \cite[Theorem~5, p. 75]{jamisonthesis}.
	These results correspond to the group $2D_4$ in the last row of the table, which is the 8 element ``quaternion group" $\langle i, j \rangle = \{\pm 1, \pm i, \pm j, \pm k\}$ and in \cite{jamisonthesis}, a positive definite 
	inner product on a quaternionic vector space. This formula over the quaternion group  is also contained
	in \cite{giarruso}, which proves an analogous formula for Clifford numbers.
	
	\item The finite groups above are closely connected with regular polyhedra and other  highly symmetric solids in $\rl^3$, just as the complex polarization formula \eqref{eq-pol-c} is associated 
	to a regular $N$-gon, and the integral version \eqref{eq-pol-int} is associated to the circle. Recall that there is a covering map of two sheets $\Phi:Sp(1)\to SO(3)$ given by $\Phi(z)(x)=zxz^{-1}$ for $x\in \rl^3$ {identified with the purely vector quaternions}. Under this map, each of the polarizing groups above is mapped to a subgroup of $SO(3)$ which is the rotational symmetry group of a well-known three dimensional figure.
	For $Sp(1)$, we get $SO(3)$, which is the symmetry group of the unit sphere $S^2\subset \rl^3$. The group \eqref{eq-onedim} maps under $\Phi$ to the symmetry group of a cylinder.
	The finite groups in the table map under $\Phi$ to the symmetry groups of the Platonic solids (as recognized in the traditional names of these groups.) On the other hand, the convex hull in $\hx$ of the  groups $2A_4$ and $2A_5$ are themselves 4 dimensional regular polytopes 
	called a 24-cell and 600-cell respectively. 
	
\end{enumerate}

As a consequence of the classification of polarizing subgroups of the division algebras we have the following:

\begin{prop}\label{prop-finite}
	Let $(\Aa, *)$ be a polarizable algebra. Then there is a finite polarizing subgroup $G\subset \Gamma(\Aa,*)$.
\end{prop}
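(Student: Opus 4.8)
The plan is to reduce to the classified case via the structure theorem and then to build a finite polarizing subgroup summand by summand. Since polarizability and the property of being a polarizing subgroup are both preserved under $*$-isomorphisms, Proposition~\ref{prop-kappa1} allows me to assume without loss of generality that $(\Aa,*)=\bigoplus_{j=1}^N(M_{n_j}(\D_j),*)$, a $*$-direct sum of matrix algebras over the real division algebras, each carrying the conjugate-transpose involution. It therefore suffices to exhibit a \emph{finite} polarizing subgroup of such a direct sum.

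First I would record finite polarizing subgroups of the building blocks. For the division algebras, the groups $\{\pm 1\}\subset\rl$, $\{\pm 1,\pm i\}\subset\cx$, and the quaternion group $\{\pm 1,\pm i,\pm j,\pm k\}\subset\hx$ are finite, consist of unitary elements, span the respective algebra over $\rl$, and, since each contains $-1$, satisfy $\mu_1=0$ by Proposition~\ref{prop-firstmoment}; hence each is polarizing. Feeding such a finite group $G_j\subset\Gamma(\D_j,\mathrm{conj})$ together with the finite cyclic (and transitive) group $H_j$ of permutation matrices generated by the $n_j$-cycle $(1,2,\dots,n_j)$ into Proposition~\ref{prop-polargroupsMnA}, I obtain a polarizing subgroup $\Delta_{G_j}H_j$ of $(M_{n_j}(\D_j),*)$ which is finite, being the internal semidirect product of the finite group $\Delta_{G_j}\cong G_j^{\,n_j}$ by the finite group $H_j$, and which contains $-I_{n_j}=\diag(-1,\dots,-1)$.

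It remains to assemble these into a finite polarizing subgroup of the direct sum, and here I would take $G=\prod_{j=1}^N(\Delta_{G_j}H_j)$, embedded componentwise in $\Aa$. This $G$ is plainly finite, and since unitarity is checked componentwise it lies in $\Gamma(\Aa,*)$; moreover it contains $-1_\Aa=(-I_{n_1},\dots,-I_{n_N})$, so $\mu_1(G)=0$ again by Proposition~\ref{prop-firstmoment}. The remaining point, and the one place that needs genuine argument, is that $G$ \emph{generates} $\Aa$, i.e.\ $\Span_\rl G=\Aa$: the hard part will be seeing that the span of the product group fills out each summand rather than merely a ``diagonal'' piece of it. To handle this, fix $j$, choose the entry $1$ in every slot $k\neq j$, and for $g_j,g_j'\in\Delta_{G_j}H_j$ subtract the two resulting elements of $G$; the difference lies in $\Span_\rl G$ and is the element of $\Aa$ supported in the $j$-th summand with entry $g_j-g_j'$. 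Because $-I_{n_j}\in\Delta_{G_j}H_j$ one has $g_j-(-g_j)=2g_j$, and since $\Delta_{G_j}H_j$ already spans $M_{n_j}(\D_j)$, these differences span the whole summand. Thus every element supported in a single summand lies in $\Span_\rl G$, and summing over $j$ yields $\Span_\rl G=\Aa$. This exhibits $G$ as a finite polarizing subgroup and completes the proof.
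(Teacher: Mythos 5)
Your proposal is correct and follows essentially the same route as the paper: reduce via the structure theorem to a $*$-direct sum of matrix algebras, obtain a finite polarizing subgroup of each summand by feeding a finite polarizing subgroup of the division algebra and a transitive permutation group into Proposition~\ref{prop-polargroupsMnA}, and take the product. The only difference is that you spell out the spanning argument for the product group, which the paper dismisses as clear.
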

\begin{proof}
	By Theorem~\ref{thm-kappa}, it suffices to show that the $*$-direct sum $\bigoplus_{j}^N (M_{n_j}(\D_j),*)$ admits a finite polarizing subgroup. For this we need to show that 
	each matrix algebra $(M_{n_j}(\D_j),*)$ has a polarizing subgroup $G_j$ which is finite, since then the product $G_1\times \dots \times G_N$ is clearly a polarizing subgroup of the direct sum
	$\bigoplus_{j}^N (M_{n_j}(\D_j),*)$ . Now we saw above  that each of the division algebras $\rl, \cx,\hx$ has a finite polarizing subgroup (and infinitely many of them for $\cx$ and $\hx$.) It now follows by Proposition~\ref{prop-polargroupsMnA} that whenever $\D$ has a finite polarizing subgroup $G$, the matrix algebra $M_n(\D)$ has a finite polarizing subgroup $\Delta_G H$ of cardinality  $\abs{G}^n \abs{H}$. Therefore each $(M_{n_j}(\D_j),*)$  has a finite polarizing subgroup and the result follows. 
\end{proof}
\subsection{Group algebras}
Let $G$ be a finite group, and let $\Aa= \rl[G]$ be its real group algebra. Recall  that this consists of formal linear combinations
$\sum_{g\in G} a_g g$, where $a_g\in \rl$, with the natural $\rl$-linear structure and a bilinear, associative multiplication induced by the multiplication operation of the group $G$. We can define 
an involution $*$  on $\Aa$ by the $\rl$-linear extension of the inversion operation on the group $G$:
\[ \left(\sum_{g\in G} a_g g\right)^* =  \left(\sum_{g\in G} a_g g^{-1}\right),\]
which is an involution since $(gh)^*=(gh)^{-1}= h^{-1}g^{-1}=h^*g^*$. The identity $1_\Aa$ of the algebra $\Aa$ is clearly $1\cdot e_G$, where $e_G$ is the identity element of the group $G$.

Identifying $g\in G$ with the element $1\cdot g\in \rl[G]$ we see that  $G\subset \Gamma(\rl[G],*)$, since for $g\in G$, we have $gg^*=gg^{-1}=e_G$. It follows that $G$ is a compact subgroup of the unitary group of $\rl[G]$ which generates it as an algebra, and 
consequently $\rl[G]$ is polarizable, thanks to Proposition~\ref{prop-structure1}. The group
\[  \{(\pm 1)g: g\in G\}\subset \Gamma(\rl[G], *)\]
isomorphic to $\Z_2\times G$ contains $-1 \cdot e_g$, hence is a polarizing group in $\rl[G]$ by Proposition~\ref{prop-firstmoment}. Theorem~\ref{thm-kappa} in this case reduces to  the classical Maschke's theorem (semisimplicity of the group algebra). The polarization constant of $\rl[G]$ can be computed in terms of 
the representation theory of the group $G$. We introduce the following notation:
	\begin{itemize}
	\item $\irr(G)$ is the set of irreducible complex characters of $G$,
	\item $\deg \chi=\chi(e_G)$ is the degree of $\chi\in \irr(G)$, i.e. the  dimension of the vector space on which the representation affording $\chi$ is defined, and
	\item for $\chi\in \irr(G)$, $s(\chi)$ denotes the Frobenius-Schur indicator of  $\chi$. Recall that this is respectively $1, 0$ or $-1$ according 
	to  whether the representation affording $\chi$  is real, complex or quternionic. 
\end{itemize}

\begin{prop}
	Then
	\begin{equation}\label{eq-kappagroupring}
			 \kappa(\rl[G],*)=\frac{1}{\abs{G}}\sum_{g\in G} \left(\sum_{\chi \in \irr(G)}\frac{(\deg \chi)^2 \cdot\ol{\chi (g)}}{\deg \chi +s(\chi)}\right)\cdot g.
	\end{equation}
	
\end{prop}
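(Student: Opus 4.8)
The plan is to combine the general expression $\kappa=(1_\Aa+\mu_2(\Aa))^{-1}$ from \eqref{eq-kappa3} with the observation that, for a group algebra, the second moment $\mu_2(\Aa)$ can be read off directly from the finite generating subgroup $G\subset\Gamma(\rl[G],*)$. Indeed, since $G$ is a compact subgroup of the unitary group that generates $\rl[G]$ as an algebra, the invariance relation \eqref{eq-mu2inv} yields $\mu_2(\Aa)=\mu_2(G)=\frac{1}{\abs{G}}\sum_{g\in G}g^2$, where we use the normalized (counting) Haar measure on $G$. This element is central in $\rl[G]$ by Proposition~\ref{prop-momentproperties}.

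First I would pass to the complexification $\cx[G]\cong\bigoplus_{\chi\in\irr(G)}M_{\deg\chi}(\cx)$ and use central characters. Since $\mu_2(\Aa)$ is central, each irreducible representation affording $\chi$ sends it to a scalar multiple of the identity, and taking traces identifies that scalar as $\frac{1}{\deg\chi}\cdot\frac{1}{\abs{G}}\sum_{g\in G}\chi(g^2)$, which is precisely $\frac{s(\chi)}{\deg\chi}$ by the definition of the Frobenius--Schur indicator. Writing $e_\chi$ for the primitive central idempotent attached to $\chi$, this says $\mu_2(\Aa)=\sum_{\chi}\frac{s(\chi)}{\deg\chi}e_\chi$, and hence $1_\Aa+\mu_2(\Aa)=\sum_{\chi}\frac{\deg\chi+s(\chi)}{\deg\chi}e_\chi$.

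The idempotents $e_\chi$ are orthogonal with $\sum_\chi e_\chi=1_\Aa$, so inverting termwise gives $\kappa=\sum_{\chi}\frac{\deg\chi}{\deg\chi+s(\chi)}e_\chi$; the denominators never vanish because a degree-one character is never of quaternionic type, so $s(\chi)=-1$ forces $\deg\chi\geq2$ and thus $\deg\chi+s(\chi)\geq1$ throughout (consistent with the general invertibility of \eqref{eq-1plusmu}). Substituting the standard idempotent formula $e_\chi=\frac{\deg\chi}{\abs{G}}\sum_{g\in G}\ol{\chi(g)}\,g$ and collecting the coefficient of each $g$ then produces \eqref{eq-kappagroupring}. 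Although the computation is carried out in $\cx[G]$, the element $\kappa$ lies a priori in $\rl[G]$, so the coefficients are automatically real; concretely, the contributions of a complex-conjugate pair of complex-type characters combine to a real quantity.

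I expect the only genuine content to be the identification of $\mu_2(\Aa)$ with $\frac{1}{\abs{G}}\sum_{g\in G}g^2$ and the recognition of its central-character value as the Frobenius--Schur indicator divided by the degree. Once these two facts are in place, the remainder is routine bookkeeping with primitive central idempotents and the character-orthogonality formula for $e_\chi$.
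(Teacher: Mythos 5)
Your proposal is correct and follows essentially the same route as the paper: both reduce to $\kappa=(1_\Aa+\mu_2(\Aa))^{-1}$ via \eqref{eq-kappa3} and \eqref{eq-mu2inv}, write $\mu_2(\rl[G])=\sum_{\chi}\frac{s(\chi)}{\deg\chi}\,\epsilon_\chi$ over the primitive central idempotents, invert termwise, and substitute the idempotent formula. The only variation is in how the coefficient $s(\chi)/\deg\chi$ is identified --- you evaluate the central character of $\frac{1}{\abs{G}}\sum_{g}g^2$ by taking traces in each irreducible representation, whereas the paper expands the square-root-counting class function $r_2=\sum_\chi s(\chi)\chi$ in the character basis --- but these are the same computation read in two equivalent ways.
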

Notice that if $\chi\in\irr(G)$ is afforded by a quaternionic representation, we must have $\deg \chi\geq 2$. Therefore in all cases we have 
$\deg \chi+s(\chi)\geq 1$. It is also not difficult to see that the coefficient of each $g\in G$ in the sum on the right hand side of \eqref{eq-kappagroupring} is real valued.
\begin{proof}

Let
\[ r_2(g)= \abs{ \{x\in G: x^2 =g\}}\]
be the number of square roots of the element $g$ in the group $G$. Then with respect to the inner product $(\phi,\psi)= \frac{1}{\abs{G}}\sum_{g\in G}\phi(g)\ol{\psi(g)}$ on the space of complex-valued functions on $G$, we have for $\chi\in \irr(G)$:
\[ (r_2, \chi)= \frac{1}{\abs{G}} \sum_{g\in G} r_2(g)\ol{\chi(g)}=  \frac{1}{\abs{G}} \sum_{g\in G} \ol{\chi(g^2)}= \ol{s(\chi)}=s(\chi),\]
where we use the representation $s(\chi)=\frac{1}{\abs{G}}\sum_{g\in G} {\chi(g^2)}$ and the fact that $s$ is real valued.
Since $r_2:G\to \cx$ is a class function, which corresponds to the fact (Proposition~\ref{prop-momentproperties})  that
$\mu_2(G)$ is in the center of $\rl[G]$, and  since $\irr(G)$ is an orthonormal basis of the class functions with respect to the inner product  $(\cdot,\cdot)$ defined above, we can write
\begin{equation}
	\label{eq-r2}
	 r_2 = \sum_{\chi\in \irr(G)}(r_2,\chi)\chi= \sum_{\chi\in \irr(G)}s(\chi)\chi.
\end{equation}
Since $G$ is a compact (even finite) subgroup of the algebra $\Aa=\rl[G]$ generating it as an algebra, the second moment of $\Aa$ can be 
computed using \eqref{eq-mu2inv}:
\begin{align*}
 \mu_2(\rl[G])&=\mu_2(G)=\frac{1}{ \abs{G}} \sum_{g\in G}g^2=\frac{1}{\abs{G}} \sum_{g\in G} r_2(g)g\\
 &=\frac{1}{\abs{G}} \sum_{g\in G} r_2(g^{-1})g \quad\text{ since $r_2(g)=r_2(g^{-1})$ }\\
  &=\frac{1}{\abs{G}} \sum_{g\in G}\left( \sum_{\chi\in \irr(G)}s(\chi)\chi(g^{-1})\right)g\quad \text{using \eqref{eq-r2}.}\\
  &=\sum_{\chi\in \irr(G)}\frac{s(\chi)}{\deg \chi} \cdot\epsilon_\chi,
\end{align*}
where  for $\chi \in \irr(G)$, we set
\begin{equation}\label{eq-epschi}
   \epsilon_\chi = \frac{\deg \chi}{\abs{G}}\sum_{g\in G} \chi(g^{-1}) g.
\end{equation}
It is known (see \cite[Chapter 2]{isaacs}) that the collection of elements $\{\epsilon_\chi: \chi \in \irr(G)\}\subset \rl[G]$  form a complete set of  primitive orthogonal idempotents in the commutative ring $Z(\rl[G])$, the center of $\rl[G]$, i.e. they satisfy
\[ \epsilon_\chi^2=\epsilon_\chi, \quad \epsilon_\chi\epsilon_\lambda =0 \quad\text{ for } \chi, \lambda \in \irr(G), \chi\not=\lambda,\quad\text{and   } \sum_{\chi \in\irr(G)} \epsilon_\chi = 1_{\rl[G]}.   \]
Therefore by \eqref{eq-kappa3} we have
\begin{align*}
\kappa(\rl[G],*)&= \left(1_{\rl[G]}+\mu_2(G)\right)^{-1}\\
&= \left(\sum_{\chi\in \irr(G)} \epsilon_\chi + \sum_{\chi\in \irr(G)}\frac{s(\chi)}{\deg \chi} \cdot\epsilon_\chi\right)^{-1}\\
&= \left(\sum_{\chi\in \irr(G)} \left( 1+\frac{s(\chi)}{\deg \chi} \right)\epsilon_\chi\right)^{-1}\\
&=\sum_{\chi\in \irr(G)}\frac{\deg \chi}{\deg \chi+s(\chi)}\cdot\epsilon_\chi.
\end{align*}
Substituting   $\epsilon_\chi$  from \eqref{eq-epschi} leads to the formula \eqref{eq-kappagroupring}.
\end{proof}

\subsection{Clifford algebras}  For nonnegative integers $p,q$, let $\mathsf{C}_{p,q}$ be the real Clifford algebra of signature $(p,q)$ (see \cite{spin}). Recall that this is a $*$-algebra over $\rl$  of dimension $2^{p+q}$ generated by the $p+q$ elements $\{e_i: 1\leq i \leq p+q\}$ satisfying the relations
\[
e_i^2=1  \quad \text{ for } 1\leq i \leq p, \quad
e_j^2=-1 \quad \text{ for } p\leq j \leq p+q, \quad
e_ie_j =-e_je_i \quad \text{ for } i\not =j.
\]
The involution of $\mathsf{C}_{p,q}$ is determined by $e_i^*= e_i, 1\leq i \leq p$ and $e_i^*=-e_i, p+1\leq i \leq p+q$.
If for $I=\setof{i_1 < \dots < i_r} \subseteq \setof{1 ,\dots , p+q}$ we set 
\[ e_I = e_{i_1}e_{i_2}\dots e_{i_r}\]
then $e_I^*= e_{i_r}^*e_{i_{r-1}}^*\dots e_{i_1}^* $.
It is easy to see that 
\[ G_{p,q} = \setof{ \pm e_I \st I = \setof{i_1 < \dots < i_r} \subseteq \setof{1 ,\dots , p+q}} \]
is a finite subgroup of $\Gamma(\mathsf{C}_{p,q},*)$ which generates $\mathsf{C}_{p,q}$. Therefore by part  (3) of Proposition~\ref{prop-structure1} the algebra $\mathsf{C}_{p,q}$ is polarizable.
We now find its polarization constant. 
\begin{prop}$ \displaystyle{\kappa(\mathsf{C}_{p,q}) = \dfrac{1}{1+{2^{-\frac{p+q-1}{2}}}\cos\allof{\frac{p-q -1}{4} \pi}}}$ 
\end{prop}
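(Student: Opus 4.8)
The plan is to reduce everything to the second moment via the identity $\kappa(\mathsf{C}_{p,q}) = (1_\Aa + \mu_2(\mathsf{C}_{p,q}))^{-1}$ of \eqref{eq-kappa3}, and to evaluate $\mu_2$ using the finite generating group $G_{p,q}\subset \Gamma(\mathsf{C}_{p,q},*)$ already exhibited above. Since $G_{p,q}$ is finite and generates the algebra, \eqref{eq-mu2inv} gives $\mu_2(\mathsf{C}_{p,q}) = \mu_2(G_{p,q}) = \frac{1}{\abs{G_{p,q}}}\sum_{g\in G_{p,q}} g^2$, where the Haar measure is the normalized counting measure. Because $(-e_I)^2 = e_I^2$, the elements $\pm e_I$ are distinct, and $\abs{G_{p,q}} = 2^{p+q+1}$, this collapses to
\[ \mu_2(\mathsf{C}_{p,q}) = \frac{1}{2^{p+q}}\sum_{I\subseteq\setof{1,\dots,p+q}} e_I^2. \]

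The next step is to compute $e_I^2$ for a single multi-index $I = \setof{i_1 < \dots < i_r}$. Repeatedly anticommuting the second copy of each generator past the intervening ones, one finds
\[ e_I^2 = (-1)^{\binom{r}{2}} \prod_{k=1}^r e_{i_k}^2 = (-1)^{\binom{r}{2}}(-1)^{b}, \]
where $r = \abs{I}$ and $b$ is the number of indices of $I$ lying in $\setof{p+1,\dots,p+q}$ (the generators squaring to $-1$). In particular $e_I^2$ is a real scalar multiple of $1_\Aa$, consistent with $\mu_2$ being central (Proposition~\ref{prop-momentproperties}). Sorting subsets by the number $a$ of ``positive'' and $b$ of ``negative'' generators they contain, the sum becomes
\[ \sum_I e_I^2 = \sum_{a=0}^p\sum_{b=0}^q \binom{p}{a}\binom{q}{b}(-1)^{\binom{a+b}{2}}(-1)^b. \]

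The heart of the argument, and the step I expect to be the main obstacle, is the evaluation of this Gauss-type sum, since the factor $(-1)^{\binom{a+b}{2}}$ couples the two binomial sums. The key device is the identity
\[ (-1)^{\binom{n}{2}} = \frac{1-i}{2}\,i^{n} + \frac{1+i}{2}\,i^{-n}, \qquad i = \sqrt{-1}, \]
which is checked by noting both sides have period $4$ in $n$ and agree for $n = 0,1,2,3$. Substituting this and interchanging the order of summation decouples the sums, and the binomial theorem gives $\sum_a \binom{p}{a}i^{a} = (1+i)^p$ and $\sum_b \binom{q}{b}(-i)^b = (1-i)^q$, together with their complex conjugates. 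The two resulting terms are conjugate to one another, so
\[ \sum_I e_I^2 = \Re\left[(1+i)^p(1-i)^{q+1}\right]. \]
Writing $1\pm i = \sqrt{2}\,e^{\pm i\pi/4}$ in polar form then yields $\sum_I e_I^2 = 2^{(p+q+1)/2}\cos\allof{\frac{(p-q-1)\pi}{4}}$, whence $\mu_2(\mathsf{C}_{p,q}) = 2^{-(p+q-1)/2}\cos\allof{\frac{(p-q-1)\pi}{4}}$, and inverting $1_\Aa + \mu_2$ gives the asserted formula. I would finish by recording a few low-dimensional checks ($\mathsf{C}_{0,0}=\rl$, $\mathsf{C}_{0,1}=\cx$, $\mathsf{C}_{0,2}=\hx$) against the values $\tfrac{1}{2},1,2$ in \eqref{eq-kappa2}, which confirm the computation.
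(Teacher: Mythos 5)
Your proposal is correct and follows essentially the same route as the paper: reduce to $\mu_2(G_{p,q})$ via \eqref{eq-kappa3} and \eqref{eq-mu2inv}, compute $e_I^2=(-1)^{\binom{r}{2}}(-1)^b$, and evaluate the resulting double binomial sum by writing the period-$4$ sign $(-1)^{\binom{n}{2}}$ in terms of powers of $i$ so the sums factor into $(1\pm i)^p(1\mp i)^{q+1}$; your expression $\Re\left[(1+i)^p(1-i)^{q+1}\right]$ is the complex conjugate of the paper's $\Re\left[(1-i)^p(1+i)^{q+1}\right]$ and hence the same real number. The only differences are cosmetic (indexing the sum by $(a,b)$ rather than by $k=a+b$, and the added sanity checks against \eqref{eq-kappa2}).
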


\begin{proof}
 Suppose that $I=  \setof{i_1 < \dots < i_k}\subseteq \setof{1 ,\dots , p+q}$, so that $\abs{I} = k$. Assume that  $\abs{I \cap \setof{1 ,\dots , p}} = \ell$. Then 
\begin{align*}
    e_I^2 &= e_{i_1} \cdots e_{i_\ell} e_{i_{\ell+1}} \cdots e_{i_k}\cdot e_{i_1} \cdots e_{i_\ell} e_{i_{\ell+1}} \cdots e_{i_k} \\
          &=(-1)^{k-1}\cdots (-1)^1 e_{i_1}^2 \cdots e_{i_\ell}^2 e_{i_{\ell+1}}^2 \cdots e_{i_k}^2\\
          &=(-1)^{\frac{k(k-1)}{2}}(-1)^{k-\ell}\\
          &=(-1)^{\frac{k(k+1)}{2}}(-1)^{\ell}.  
\end{align*}
For such an $I$ there are $\begin{pmatrix} p \\ \ell \end{pmatrix}$ possibilities for $I \cap \setof{1, \cdots, p}$ and $\begin{pmatrix} q \\ k- \ell \end{pmatrix}$ possibilities for $I \setminus \setof{1 , \cdots , p}$. Hence there are $\begin{pmatrix} p \\ \ell \end{pmatrix}\begin{pmatrix} q \\ k- \ell \end{pmatrix}$ such $I$ and 
\[ \mu_2(\mathsf{C}_{p,q}) = \mu_2(G_{p,q}) = \dfrac{1}{2^{p+q}} \sum_{I} e_I^2= \dfrac{1}{2^{p+q}} \sum_{k=0}^{p+q} (-1)^{\frac{k(k+1)}{2}} \sum_{\ell=0}^k \begin{pmatrix} p \\ \ell \end{pmatrix}\begin{pmatrix} q \\ k- \ell \end{pmatrix}(-1)^{\ell}.  \]
Next observe that $(-1)^{\frac{k(k+1)}{2}}$ is periodic in $k$ with period $4$. The equality 
$
    (-1)^{\frac{k(k+1)}{2}} = \Re( (1+i)i^k )
$
is easily verified by periodicity. From this we get 
\begin{align*}
    \mu_2(\mathsf{C}_{p,q}) &= \dfrac{1}{2^{p+q}} \Re \allof{(1+i)\sum_{k=0}^{p+q} i^k \sum_{\ell=0}^k \begin{pmatrix} p \\ \ell \end{pmatrix}\begin{pmatrix} q \\ k- \ell \end{pmatrix}(-1)^{\ell}} \\
    &=\dfrac{1}{2^{p+q}} \Re \allof{(1+i) \allof{\sum_{r=0}^p \begin{pmatrix} p \\ r \end{pmatrix} (-1)^r i^r} \allof{\sum_{s=0}^q \begin{pmatrix} q \\ s \end{pmatrix} i^s}} \\
    &= \dfrac{1}{2^{p+q}} \Re\allof{(1-i)^p(1+i)^{q+1}} \\
    &= \dfrac{2^{\frac{p+q+1}{2}}}{2^{p+q}} \Re\allof{e^{-p\pi i/4}e^{(q+1)\pi i / 4}} \\
    &= \dfrac{1}{2^{\frac{p+q-1}{2}}}\cos\allof{\frac{p-q -1}{4} \pi}.
\end{align*}
The result follows by \eqref{eq-kappa3}.
\end{proof}

For the algebras $\mathsf{C}_{0,q}$, the polarization identity corresponding to the group $G_{0,q} $ was found in \cite{giarruso}. The case $q=2$ (quaternions) was already done in \cite{jamisonthesis}.

\subsubsection{Matrix algebras} A simple special case of Theorem~\ref{thm-kappa} is that
 the matrix algebras $M_n(\D)$  where $D=\rl,\cx,\hx$ with the conjugate-transpose involution are polarizable, 
and the polarization constant is $\kappa = \frac{n\delta}{(n-1)\delta+2}I_n$, where $\delta= \dim_\rl \D$.
Here we note some further information about polarizing subgroups of $M_n(\rl)$ and $M_n(\cx)$ obtained by using the classical theorem of  Burnside that a subgroup of $GL_n(\cx)$ spans $M_n(\cx)$ over $\cx$ if and only if the natural representation of $G$ on $\cx^n$ by matrix multiplication is irreducible (see e.g. \cite{lang}):

\begin{prop}
\label{prop-MnRpolarG}
\begin{enumerate}
    \item 
A compact subgroup $G$ of $\Gamma(M_n(\rl))= O(n)$ is polarizing in $M_n(\rl)$ if and only if $G\not=\{I_n\}$ and the natural representation of $G$ on $\cx^n$ is irreducible.
\item A compact subgroup $G$ of $\Gamma(M_n(\C))= \mathrm{U}(n)$ is polarizing in $M_n(\C)$ if and only if $G\not =\{I_n\}$, the natural representation of $G$ on $\C^n$ is irreducible and $i I_n \in \mathrm{span}_\rl G$ where $I_n$ is the identity in $M_n(\C)$.   
\end{enumerate}

\end{prop}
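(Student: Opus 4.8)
The plan is to unwind the definition of a polarizing subgroup into its two constituent conditions—that $G$ generate the algebra (equivalently $\Span_\rl G = \Aa$) and that the first moment vanish, $\mu_1(G)=0$—and to dispatch each separately, using Burnside's theorem for the spanning condition and Proposition~\ref{prop-firstmoment} for the moment condition. Throughout I would use the basic fact that, since $G$ is a group, $V:=\Span_\rl G$ is automatically a real subalgebra of the ambient matrix algebra.

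For $M_n(\rl)$ I would first settle the generating condition. Because every element of $G$ has real entries, the complex span decomposes as $\Span_\cx G = V \oplus iV$ inside the real direct sum $M_n(\cx) = M_n(\rl)\oplus iM_n(\rl)$, so $\dim_\cx \Span_\cx G = \dim_\rl V$. By Burnside, the natural representation on $\cx^n$ is irreducible if and only if $\Span_\cx G = M_n(\cx)$, that is, if and only if $\dim_\rl V = n^2 = \dim_\rl M_n(\rl)$, i.e. $V = M_n(\rl)$. Thus for $M_n(\rl)$, irreducibility is exactly the statement that $G$ generates the algebra.

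For $M_n(\cx)$ the real span $V$ need not be a complex subspace, so irreducibility of the natural representation (equivalently $\Span_\cx G = M_n(\cx)$, by Burnside) is not by itself enough to force $V = M_n(\cx)$; this gap is precisely what the hypothesis $iI_n \in \Span_\rl G$ fills, and isolating its role is the delicate point of the argument. The key observation is that since $V$ is a subalgebra containing $iI_n$, it is closed under left multiplication by $iI_n$, whence $iV \subseteq V$ and $V$ is a complex subspace of $M_n(\cx)$. A complex subspace containing $G$ must contain $\Span_\cx G = M_n(\cx)$, forcing $V = M_n(\cx)$. Conversely, if $V = M_n(\cx)$ then trivially $iI_n \in V$ and $\Span_\cx G = M_n(\cx)$, giving irreducibility. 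Hence for $M_n(\cx)$ the condition ``$G$ generates'' is equivalent to ``irreducible and $iI_n \in \Span_\rl G$.''

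It remains to handle the moment condition, and here the two cases are uniform. Whenever $G$ generates $M_n(\D)$ for $\D=\rl$ or $\cx$, the subalgebra it generates is all of $M_n(\D)$, which is a simple ring; therefore, provided $G \neq \{I_n\}$, Proposition~\ref{prop-firstmoment}\ref{condition b} applies and yields $\mu_1(G)=0$ automatically. This gives the reverse implications in both parts. For the forward implications, if $G$ is polarizing then $\Span_\rl G = M_n(\D)$, which supplies irreducibility (together with $iI_n \in \Span_\rl G$ in the complex case); moreover $G \neq \{I_n\}$, since $G = \{I_n\}$ would give $\mu_1(G) = I_n \neq 0$, contradicting that $G$ is polarizing. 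Assembling these equivalences completes both parts, the only genuine obstacle being the subalgebra-plus-$iI_n$ argument of the previous paragraph that bridges the complex span and the real span.
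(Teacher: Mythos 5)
Your proof is correct and follows essentially the same route as the paper's: Burnside's theorem handles the generating condition, Proposition~\ref{prop-firstmoment}\ref{condition b} (with the simplicity of $M_n(\D)$) handles the vanishing of the first moment, and the $iI_n$ hypothesis is used exactly as in the paper to upgrade the complex span to the real span. You merely spell out two steps the paper leaves terse — the dimension count $\Span_\cx G = V\oplus iV$ in the real case, and the observation that a subalgebra containing $iI_n$ is a complex subspace — both of which are fine.
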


\begin{proof}
Suppose that $G$ is polarizing in $M_n(\rl)$. Then $\mu_1(G)=0$ implies that $G$ is nontrivial. Since $G$ generates $M_n(\rl)$ as a real algebra it follows that $G$ generates $M_n(\cx)$ as a complex algebra. Thus Burnside's theorem implies that the natural action of $G$ on $\cx^n$ is irreducible.

Conversely, if $G$ acts irreducibly on $\cx^n$, then $G$ generates $M_n(\cx)$ as a complex algebra by Burnside's theorem. Since the matrices have real entries,  $G$ generates $M_n(\rl)$ as a real algebra. Then by part (b) of Proposition~\ref{prop-firstmoment}, $\mu_1(G) = 0$. Therefore $G$ is polarizing in $M_n(\rl)$.

Now suppose that $G$ is polarizing in $M_n(\cx)$. Again, $\mu_1(G) =0 $ implies that $G$ is nontrivial. Since $G$ generates $M_n(\cx)$ as a real algebra, i.e. $\mathrm{span}_\rl G = M_n(\cx)$, it follows that $i I_n \in \mathrm{span}_\rl G$.  It follows that $M_n(\cx) = \mathrm{span}_\rl G = \mathrm{span}_\cx G$, and so Burnside's theorem says that the natural action of $G$ on $\cx^n$ is irreducible.

Conversely, suppose that $G$ is nontrivial, the natural representation of $G$ on $\cx^n$ is irreducible, and $i I_n \in \mathrm{span}_\rl G$. Burnside's theorem yields that $G$ generates $M_n(\cx)$ as a complex algebra, i.e. $\mathrm{span}_\cx G = M_n(\cx)$. But as was noted above, the fact that $i I_n \in \mathrm{span}_\rl G$ implies that $\mathrm{span}_\rl G = \mathrm{span}_\cx G = M_n(\cx)$. Then again 
Proposition~\ref{prop-firstmoment} shows that 
$\mu_1(G)=0$. Therefore $G$ is polarizing in $M_n(\cx)$.
\end{proof}

As a result we have the following:

\begin{prop}
	The group $SO(n)$ (resp. $SU(n)$) is polarizing in $M_n(\rl)$ (resp. $M_n(\cx)$) if and only if $n\geq 3$.
\end{prop}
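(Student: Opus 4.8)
The plan is to reduce everything to Proposition~\ref{prop-MnRpolarG}, which characterizes polarizing compact subgroups of $M_n(\rl)$ and $M_n(\cx)$ by a nontriviality condition together with irreducibility of the natural representation on $\cx^n$, plus in the unitary case the extra requirement $iI_n\in\Span_\rl G$. In both families nontriviality is immediate, since $SO(n)$ and $SU(n)$ reduce to $\{I_n\}$ only when $n=1$; so the real content lies in the irreducibility and, for $SU(n)$, the span condition.

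For $SO(n)\subseteq\Gamma(M_n(\rl))=O(n)$, by Proposition~\ref{prop-MnRpolarG}(1) I only need to decide when the natural action on $\cx^n$ is irreducible. For $n=2$ the group $SO(2)$ is abelian and its complexified standard representation splits into the two eigenlines of a rotation (eigenvalues $e^{\pm i\theta}$), so it is reducible and $SO(2)$ is not polarizing. For $n\ge 3$ I would prove irreducibility over $\cx$ by a fixed-point argument: any $T\in M_n(\cx)$ commuting with all of $SO(n)$ commutes in particular with the stabilizer $H=SO(n-1)$ of $e_1$, hence preserves the $H$-fixed subspace of $\cx^n$. Since for $m=n-1\ge 2$ the group $SO(m)$ has no nonzero fixed vector in $\cx^m$ (writing a fixed vector as $a+ib$ forces $a,b$ to be real fixed vectors, hence $0$), this fixed subspace is exactly $\cx e_1$, giving $Te_1=\lambda e_1$; transitivity of $SO(n)$ on unit vectors then propagates this to $T=\lambda I_n$, so by Schur's lemma $\cx^n$ is irreducible and $SO(n)$ is polarizing precisely for $n\ge 3$.

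For $SU(n)\subseteq\Gamma(M_n(\cx))=U(n)$, Proposition~\ref{prop-MnRpolarG}(2) requires irreducibility over $\cx$ together with $iI_n\in\Span_\rl SU(n)$. Irreducibility holds for every $n\ge 2$, since $SU(n)$ acts transitively on the unit sphere of $\cx^n$, so any nonzero invariant subspace contains all unit vectors and is everything. The decisive point is therefore the span condition. For $n\ge 3$ the center of $SU(n)$ contains $\zeta I_n$ with $\zeta=e^{2\pi i/n}$ (note $\det(\zeta I_n)=\zeta^n=1$), and since $\zeta$ is non-real for $n\ge 3$ the real span of $\{I_n,\zeta I_n\}$ already contains $iI_n$, so $SU(n)$ is polarizing. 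For $n=2$ I expect the condition to fail: $SU(2)$ is the group of unit quaternions, whose real span is the $4$-dimensional algebra $\hx\subset M_2(\cx)$, and inspecting diagonal entries shows $iI_2\notin\hx$; hence $SU(2)$ is not polarizing. The case $n=1$ is excluded by triviality in both families.

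The main obstacle is the irreducibility of the standard representation of $SO(n)$ over $\cx$ for $n\ge 3$ — equivalently, that this representation is of real rather than complex type, which is exactly what separates $n\ge 3$ from $n=2$; the stabilizer argument through $SO(n-1)$ is the cleanest route I see. For $SU(n)$ the analogous subtlety is entirely concentrated in the span condition $iI_n\in\Span_\rl SU(n)$, established via central roots of unity for $n\ge 3$ and refuted for $n=2$ through the quaternionic description of $SU(2)$.
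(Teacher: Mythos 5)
Your proof is correct, and while it follows the paper's overall strategy of reducing everything to Proposition~\ref{prop-MnRpolarG}, it diverges in the two substantive steps in ways worth noting. For the irreducibility of the standard representation of $SO(n)$ on $\cx^n$ when $n\geq 3$, the paper simply asserts that this is ``not difficult to show using Schur's lemma''; your stabilizer argument --- computing the $SO(n-1)$-fixed subspace to be $\cx e_1$, concluding $Te_1=\lambda e_1$ for any intertwiner $T$, and propagating by transitivity on unit vectors --- supplies an actual proof and correctly isolates why $n=2$ fails (the fixed subspace of the trivial stabilizer $SO(1)$ is everything, and indeed the $SO(2)$-representation splits into the eigenlines of $e^{\pm i\theta}$). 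The more significant difference is in the span condition $iI_n\in\Span_\rl SU(n)$: the paper establishes this by explicit case-by-case constructions of diagonal matrices according to the residue of $n$ modulo $4$, whereas you observe that the central element $\zeta I_n$ with $\zeta=e^{2\pi i/n}$ lies in $SU(n)$ and is non-real for $n\geq 3$, so that $\Span_\rl\{I_n,\zeta I_n\}$ already contains $iI_n$. This one-line argument replaces the paper's four-case computation and is strictly simpler. Your treatments of the failure cases ($SO(2)$ reducible over $\cx$; $\Span_\rl SU(2)\cong\hx$ omitting $iI_2$, visible from the form $\begin{pmatrix} a & -\bar b\\ b & \bar a\end{pmatrix}$; triviality for $n=1$) match the paper's in substance. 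The only mild redundancy is your separate transitivity argument for the irreducibility of $SU(n)$, which the paper gets for free from the inclusion $SO(n)\subset SU(n)$, but both are valid.
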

\begin{proof}
If $n=1,2$ it is clear that $SO(n)$ (resp. $SU(n)$) fails to span the algebra $M_n(\rl)$ (resp. $M_n(\cx)$): note that $SO(2)$ (resp. $SU(2)$) generates an algebra isomorphic to $\cx$ (resp. $\hx$). Also it is not difficult to show that the natural representation of $SO(n)$ on $\cx^n$ is irreducible using Schur's lemma. The same holds for $SU(n)$ as it contains $SO(n)$. To complete the proof we only need to verify that $iI_n\in \mathrm{span}_\rl(SU(n))$ for $n\geq 3$.

 If $n=4k$ for some $k \geq 1$, then in fact $i I_n \in SU(n)$. If $n = 4k+1$ for some $k \geq 1$, consider the diagonal matrices $D_\ell \in M_{4k+1}(\cx)$ with a $1$ in the $\ell$-th entry and an $i$ in each other entry. Each of these is in $SU(4k +1)$ and $D_\ell - D_\ell^{-1}$ is the diagonal matix with a $0$ in the $\ell$-th entry and $2i$ in each of the other entries. Thus
\[iI_{4k+1} = \dfrac{1}{8k} \sum_{\ell = 1}^{4k+1} (D_\ell - D_\ell^{-1}) \in \mathrm{span}_\rl SU(4k+1).\]
A similar argument can be used to show that $i I_{4k+3} \in \mathrm{span}_\rl SU(4k+3)$  for $k \geq 0$ by considering the diagonal matrices $D_\ell \in SU(4k+3)$ with a $-1$ in the $\ell$-th entry and an $i$ in each other entry. Thus
\[iI_{4k+3} = \dfrac{1}{2(4k+2)} \sum_{\ell = 1}^{4k+3} (D_\ell - D_\ell^{-1}) \in \mathrm{span}_\rl SU(4k+3).\]
For $n = 4k+2$, $k \geq 1$, a further modification is needed. In this case we consider the diagonal matrices $D_{k,\ell} \in SU(4k+2)$ with a $1$ in the $k$-th and $\ell$-th entries and an $i$ in each other entry for $1 \leq k < \ell \leq n$. Then there are $n(n-1)/2 =(4k+2)(4k+1)/2$ summands $D_{k,\ell} - D_{k,\ell}^{-1}$, and $n-1=4k+1$ of them have a $0$ in a given entry and a $2i$ in each other entry. Thus
\[iI_{4k+2} =\dfrac{1}{4k(4k+1)} \sum_{1 \leq k < \ell \leq n} (D_{k,\ell} - D_{k,\ell}^{-1}) \in \mathrm{span}_\rl SU(4k+2). \]
\end{proof}

\bibliographystyle{alpha}
\bibliography{polarization}
\end{document}